\documentclass[a4paper]{amsart} 
\usepackage{amssymb} 
\usepackage{verbatim}
\usepackage[all]{xy}
\SelectTips{cm}{}

\usepackage{hyperref}
\calclayout
\makeatletter
\makeatother

\title{Cohomological  length functions}

\thanks{Version from May 21, 2013.}

\author{Henning Krause}
\address{Fakult\"at f\"ur Mathematik\\
Universit\"at Bielefeld\\ D-33501 Bielefeld\\ Germany}
\email{hkrause@math.uni-bielefeld.de}

\newtheorem{lem}{Lemma}[section]
\newtheorem{prop}[lem]{Proposition}
\newtheorem{cor}[lem]{Corollary}
\newtheorem{thm}[lem]{Theorem}
\newtheorem{conj}[lem]{Conjecture}

\theoremstyle{remark}
\newtheorem{rem}[lem]{Remark}

\theoremstyle{definition}
\newtheorem{exm}[lem]{Example}
\newtheorem{defn}[lem]{Definition}

\numberwithin{equation}{section}

\newcommand{\smatrix}[1]{\left[\begin{smallmatrix}#1\end{smallmatrix}\right]}

\renewcommand{\mod}{\operatorname{\mathsf{mod}}\nolimits}

\newcommand{\Ob}{\operatorname{\mathsf{Ob}}\nolimits}
\renewcommand{\sup}{\operatorname{\mathsf{sup}}\nolimits}
\newcommand{\Sp}{\operatorname{\mathsf{Sp}}\nolimits}

\newcommand{\proj}{\operatorname{\mathsf{proj}}\nolimits}
\newcommand{\per}{\operatorname{\mathsf{per}}\nolimits}

\newcommand{\rad}{\operatorname{\mathsf{rad}}\nolimits}

\newcommand{\id}{\operatorname{id}\nolimits}

\newcommand{\Mod}{\operatorname{\mathsf{Mod}}\nolimits}

\newcommand{\End}{\operatorname{\mathsf{End}}\nolimits}
\newcommand{\Hom}{\operatorname{\mathsf{Hom}}\nolimits}
\newcommand{\RHom}{\operatorname{\mathsf{RHom}}\nolimits}

\renewcommand{\Im}{\operatorname{\mathsf{Im}}\nolimits}
\newcommand{\Ker}{\operatorname{\mathsf{Ker}}\nolimits}
\newcommand{\KGdim}{\operatorname{\mathsf{KGdim}}\nolimits}

\newcommand{\coh}{\operatorname{\mathsf{coh}}\nolimits}

\newcommand{\Ext}{\operatorname{\mathsf{Ext}}\nolimits}

\newcommand{\Lex}{\operatorname{\mathsf{Lex}}\nolimits}

\newcommand{\Spec}{\operatorname{\mathsf{Spec}}\nolimits}
\newcommand{\length}{\operatorname{\mathsf{length}}\nolimits}

\newcommand{\Thick}{\operatorname{\mathsf{Thick}}\nolimits}
\newcommand{\rep}{\operatorname{\mathsf{rep}}\nolimits}

\newcommand{\Zsp}{\operatorname{\mathsf{Zsp}}\nolimits}

\newcommand{\Ab}{\operatorname{\mathsf{Ab}}\nolimits}
\newcommand{\op}{\mathrm{op}}

\newcommand{\comp}{\mathop{\circ}}
\newcommand{\lto}{\longrightarrow}
\newcommand{\xto}{\xrightarrow}

\def\a{\alpha}
\def\b{\beta}
\def\e{\varepsilon}

\def\g{\gamma}
\def\p{\phi}

\def\Ga{\Gamma}

\def\Si{\Sigma}

\def\A{{\mathsf A}}
\def\B{{\mathsf B}}
\def\C{{\mathsf C}}
\def\D{{\mathsf D}}

\def\Sc{{\mathsf S}}

\def\T{{\mathsf T}}
\def\U{{\mathsf U}}
\def\V{{\mathsf V}}

\def\bbN{{\mathbb N}}
\def\bbP{{\mathbb P}}

\def\bbZ{{\mathbb Z}}

\def\frp{{\mathfrak p}}
\def\frq{{\mathfrak q}}

\begin{document}

\begin{abstract}
  We study certain integer valued length functions on triangulated
  categories and establish a correspondence between such functions and
  cohomological functors taking values in the category of finite
  length modules over some ring. The irreducible cohomological
  functions form a topological space. We discuss its basic properties
  and include explicit calculations for the category of perfect
  complexes over some specific rings.
\end{abstract}

\keywords{cohomological functor, endofinite object, spectrum, Krull--Gabriel dimension,
  perfect complex}
\subjclass[2010]{18E30 (primary), 16E30, 16E35, 16G60, 18E10}

\maketitle

\section{Introduction}

Let $\C$ be a triangulated category with suspension
$\Si\colon\C\xto{\sim}\C$. Given a cohomological functor
$H\colon\C^\op\to \Mod k$ into the category of modules over some ring
$k$ such that $H(C)$ has finite length for each object $C$, we
consider the function
\[\chi\colon \Ob\C\lto\bbN,\quad C\mapsto \length_k H(C),\]
and ask:
\begin{itemize}
\item[--] what are the characteristic properties of such a function
  $\Ob\C\lto\bbN$, and
\item[--] can we recover $H$ from $\chi$?
\end{itemize}
Somewhat surprisingly,  we can offer fairly complete answers to
both questions.

Note that similar questions arise in Boij--S\"oderberg theory when
cohomology tables are studied; recent progress \cite{EE2012,ES2009}
provided some motivation for our work. Further motivation comes
from the quest (initiated by Paul Balmer, for instance) for points in
the context of triangulated categories.

Typical examples of cohomological functors are the representable
functors of the form $\Hom(-,X)$ for some object $X$ in $\C$. We begin
with a result that takes care of this case; its proof is elementary and based on a
theorem of Bongartz \cite{Bo1989}.

\begin{thm}[Jensen--Su--Zimmerman \cite{JSZ2005}]\label{th:1}
  Let $k$ be a commutative ring and $\C$ a $k$-linear triangulated
  category such that each morphism set in $\C$ has finite length as a
  $k$-module. Suppose also for each pair of objects $X,Y$ that
  $\Hom(X,\Si^nY)=0$ for some $n\in\bbZ$. Then two objects $X$ and $Y$
  are isomorphic if and only if the lengths of $\Hom(C,X)$ and
  $\Hom(C,Y)$ coincide for all $C$ in $\C$.\qed
\end{thm}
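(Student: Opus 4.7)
The plan is to reduce the statement to Bongartz's theorem \cite{Bo1989} by passing from $\C$ to an Artin-algebra setting. First I would note that since $\Si$ is an autoequivalence of $\C$, substituting $\Si^{-n}C$ for $C$ in the hypothesis yields
\[\length_k\Hom(C,\Si^n X)=\length_k\Hom(C,\Si^n Y)\]
for every $C\in\C$ and every $n\in\bbZ$. Specialising to $C=X$ and $C=Y$ gives the identities $\length\End X=\length\Hom(X,Y)$ and $\length\Hom(Y,X)=\length\End Y$, so the representable functors $\Hom_\C(-,X)$ and $\Hom_\C(-,Y)$ carry identical length data on every object of $\C$.

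Next I would form the joint endomorphism ring $R=\End_\C(X\oplus Y)$, a finite-length $k$-algebra with orthogonal idempotents $e_X,e_Y$. Under the Yoneda functor (restricted to $\add(X\oplus Y)$ and its shifts), $X$ and $Y$ correspond to two projective $R$-modules of the same length, and the shift-invariant length equality from the first paragraph promotes this to equality of Hom-lengths against every finitely generated $R$-module coming from an object of $\C$. Bongartz's theorem, in the clean form that says modules over an Artin algebra are determined up to isomorphism by their Hom-length data, then yields an isomorphism between the two projective $R$-modules, and Yoneda transports this back to $X\cong Y$ in $\C$.

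The main obstacle is the middle step, which hides several subtleties. One must verify that the shift-invariant length equality suffices to supply Bongartz with Hom-length control over enough $R$-modules, and that $R$ behaves as an honest Artin algebra so that his result applies. The weak, single-integer form of the vanishing hypothesis $\Hom(X,\Si^n Y)=0$ is what cuts off the relevant long exact sequences and yields this boundedness; playing it against the autoequivalence $\Si$ — so that a single vanishing at level $n$ propagates into the functorial picture needed by Bongartz — is the delicate technical point where the proof in \cite{JSZ2005} must do its real work.
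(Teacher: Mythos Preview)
Your strategy---reduce to an Artin algebra and invoke Bongartz---matches what the paper signals (it attributes the result to \cite{JSZ2005} and says the proof ``is elementary and based on a theorem of Bongartz''). But what you have written is a plan with the decisive step left open, and you say so yourself. The concrete difficulty is this: passing to $R=\End_\C(X\oplus Y)$ identifies $X$ and $Y$ with the projective $R$-modules $e_XR$ and $e_YR$, but Bongartz's criterion requires equality of $\length_k\Hom_R(L,-)$ for \emph{all} finitely generated $R$-modules $L$, whereas the hypothesis only gives you $\Hom_\C(C,-)$ for $C\in\C$. The functor $\Hom_\C(X\oplus Y,-)$ is far from essentially surjective onto $\mod R$, so ``Hom-length data against every object of $\C$'' does not obviously yield ``Hom-length data against every $R$-module.'' Your last paragraph acknowledges this gap and defers its resolution to \cite{JSZ2005}; but that is precisely the content one is asked to supply.

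Your reading of the vanishing hypothesis is also slightly off. It does not manufacture extra test modules for Bongartz; its role is to bound the long exact $\Hom$-sequences so that alternating-sum arguments terminate. In the paper's framework this reappears as condition~(2) in the definition of an endofinite cohomological functor, and indeed the paper remarks that Theorem~\ref{th:1} can be \emph{deduced from} Theorem~\ref{th:2}: both $\Hom(-,X)$ and $\Hom(-,Y)$ are endofinite cohomological functors, and the unique decomposition of cohomological functions into irreducibles recovers the Krull--Schmidt decompositions of $X$ and $Y$ from the common length data. That route bypasses the ``enough test modules'' problem altogether and is the alternative you should pursue if the direct Bongartz reduction resists completion.
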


Examples of triangulated categories satisfying the assumptions in this
theorem arise from bounded derived categories. To be precise, if $\A$
is a $k$-linear exact category such that each extension group in $\A$
has finite length as a $k$-module, then its bounded derived category
$\D^b(\A)$ satisfies the above assumptions, since for all objects
$X,Y$ in $\A$ (viewed as complexes concentrated in degree
zero)
\[\Hom(X,\Si^nY)=\Ext^{n}(X,Y)= 0 \quad\text{for all
}n<0.\]

On the other hand, Auslander and Reiten provided in
\cite[Section~4.4]{AR1986} simple examples of triangulated categories
which do not have the property that objects are determined by the
lengths of their morphism spaces; see also \cite{BP1984}.

Now let $\C$ be an essentially small triangulated category. Thus the
isomorphism classes of objects in $\C$ form a set. Given any additive
functor $H\colon\C^\op\to\Ab$ into the category of abelian groups, we
denote by $\End(H)$ the endomorphism ring formed by all natural
transformations $H\to H$. Note that $\End(H)$ acts on $H(C)$ for all
objects $C$. Moreover, if a ring $k$ acts on $H(C)$ for all objects
$C$ in a way that commutes with all morphisms in $\C$, then this
action factors through that of $\End(H)$ via a homomorphism $k\to\End
(H)$. In particular, when $H(C)$ has finite length over $k$ then it
has also finite length over $\End(H)$.  This observation motivates the
following definition.

\begin{defn} 
  A cohomological functor $H\colon\C^\op\to\Ab$ is called
  \emph{endofinite}\footnote{The term \emph{endofinite} reflects
    condition (1), while (2) is added for technical reasons.} provided
  that for each object $C$ 
\begin{enumerate}
\item $H(C)$ has finite length as a module over the ring $\End(H)$, and
\item $H(\Si^n C)=0$ for some $n\in\bbZ$.
\end{enumerate} 
If $(H_i)_{i\in I}$ are cohomological functors, then the \emph{direct
  sum} $\bigoplus_{i\in I}H_i$ is cohomological.  A non-zero
cohomological functor $H\colon\C^\op\to\Ab$ is \emph{indecomposable}
if it cannot be written as a direct sum of two non-zero cohomological
functors.
\end{defn}

An endofinite cohomological functor $H\colon\C^\op\to\Ab$ gives rise
to a function
\[\chi_H\colon\Ob\C\lto\bbN,\quad C\mapsto\length_{\End(H)}H(C)\]
which is cohomological in the following sense; see
Lemma~\ref{le:fct}.

\begin{defn}\label{de:fct}
  A function $\chi\colon\Ob\C\to\bbN$ is called \emph{cohomological}
  provided that \begin{enumerate}
\item $\chi(C\oplus C')=\chi(C)+\chi(C')$ for each pair of objects $C$
  and $C'$,
\item for each object $C$ there is some $n\in\bbZ$ such that $\chi(\Si^n C)=0$,  and 
\item for each exact triangle $A\to B\to C\to $ in $\C$ and each
  labelling  \[\cdots \to X_{-2}\to X_{-1}\to X_0\to X_1 \to X_{2}\to \cdots\] of the induced
  sequence
\[\cdots\to\Si^{-1}B\to\Si^{-1}C\to A\to B\to C\to\Si A\to\Si B\to
\cdots\] with $\chi(X_0)=0$ we
have \[\sum_{i=0}^n(-1)^{i+n}\chi(X_i)\ge 0\qquad \text{for all
  $n\in\bbZ$},\] and equality holds when $\chi(X_n)=0$.
\end{enumerate} 
If $(\chi_i)_{i\in I}$ are cohomological functions and for any $C$ in
$\C$ the set $\{i\in I\mid \chi_i(C)\neq 0\}$ is finite, then we can
define the \emph{locally finite sum} $\sum_{i\in I}\chi_i$.  A
non-zero cohomological function is \emph{irreducible} if it cannot be
written as a sum of two non-zero cohomological functions.
\end{defn}

The following theorem is the main result of this note; it is proved in
\S\ref{se:main_thm} and builds on work of Crawley-Boevey on finite
endolength objects \cite{CB1994a,CB1994b}.

\begin{thm}\label{th:2}
Let $\C$ be an essentially small triangulated category.
\begin{enumerate}
\item Every endofinite cohomological functor $\C^\op\to\Ab$ decomposes
  essentially uniquely into a direct sum of indecomposable endofinite
  cohomological functors with local endomorphism rings.
\item Every cohomological function $\Ob\C\to\bbN$ can be written
  uniquely as a locally finite sum of irreducible cohomological
  functions.
\item The assignment $H\mapsto \chi_H$ induces a bijection between the
  isomorphism classes of indecomposable endofinite cohomological
  functors $\C^\op\to\Ab$ and the irreducible cohomological functions
  $\Ob\C\to\bbN$.
\end{enumerate}
\end{thm}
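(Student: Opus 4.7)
My plan is to deduce all three parts from Crawley-Boevey's structure theory of endofinite objects \cite{CB1994a,CB1994b}, applied inside the functor category $\Mod\C$ of additive functors $\C^\op\to\Ab$. This is a locally finitely presented Grothendieck abelian category in which the Yoneda embedding identifies $\C$ with the subcategory of finitely presented projectives; cohomological functors form a full subcategory closed under direct sums, products, and summands, and an object is endofinite there precisely when the first condition of the endofiniteness definition holds. After recording these generalities and invoking Lemma~\ref{le:fct} to guarantee that each $\chi_H$ is indeed a cohomological function, part~(1) drops out immediately: Crawley-Boevey's theorem gives an essentially unique decomposition $H=\bigoplus_i H_i$ into indecomposables with local endomorphism rings, and both cohomologicality and the vanishing condition from the endofiniteness definition pass to summands automatically.

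For part~(3) I would first establish injectivity of $H\mapsto\chi_H$ by lifting Theorem~\ref{th:1}: given two indecomposable endofinite cohomological functors $H,H'$ with $\chi_H=\chi_{H'}$, I would apply Bongartz's argument to the action of $\End(H)\times\End(H')$ on $H\oplus H'$, where the vanishing condition on suspensions substitutes for the hypothesis $\Hom(X,\Si^n Y)=0$ in Theorem~\ref{th:1}. It is then straightforward to see that $\chi_H$ is irreducible whenever $H$ is indecomposable, since a nontrivial decomposition of $\chi_H$ would lift (via the surjectivity step below) to a splitting of $H$, contradicting indecomposability.

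The main obstacle is surjectivity in~(3): given an irreducible cohomological function $\chi$, construct an indecomposable endofinite $H$ with $\chi_H=\chi$. My approach is to build $H$ as a universal object inside $\Mod\C$, formed as a suitable filtered colimit of representables organized by the data ``visible'' to $\chi$, i.e.\ by the behaviour of $\chi$ on morphisms and triangles of $\C$. Condition~(3) of Definition~\ref{de:fct} provides exactly the alternating-sum identities needed to force the candidate $H$ to send exact triangles to exact sequences, condition~(2) supplies the vanishing required for endofiniteness, and additivity in~(1) yields finite length of $H(C)$ over $\End(H)$. Irreducibility of $\chi$ then forces $H$ to be indecomposable via the already-proven injectivity, completing~(3). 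Finally, part~(2) follows from (1) and (3): decomposing an endofinite cohomological functor and applying $\chi$ converts the essentially unique direct-sum decomposition into the required unique locally finite sum of irreducible cohomological functions, with uniqueness inherited from the bijection of~(3).
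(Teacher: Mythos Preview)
Your part~(1) matches the paper's argument exactly. The remaining parts have real gaps.

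For surjectivity in~(3), ``build $H$ as a suitable filtered colimit of representables organized by the data visible to $\chi$'' is not a construction: a cohomological function $\chi$ assigns numbers to objects, not a diagram in $\C$, and you give no indication of which representables or transition maps to use, nor how the alternating-sum inequalities would force exactness of a colimit you have not yet defined. The paper's route is different and passes through the abelianisation. By Lemma~\ref{le:res}, cohomological functions on $\C$ are in bijection with additive functions on $\Ab\C$. Given an irreducible additive $\chi$ on $\Ab\C$, one forms the Serre subcategory $\Sc_\chi=\{X\mid\chi(X)=0\}$; the quotient $\Ab\C/\Sc_\chi$ is then a length category with a unique simple (by irreducibility of $\chi$), and the desired $H$ is the injective envelope of that simple in $\Lex((\Ab\C/\Sc_\chi)^\op,\Ab)$, pulled back along the quotient functor. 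A direct length computation gives $\chi_H=\chi$ (Proposition~\ref{pr:exact}), or one simply quotes the main theorem of \cite{CB1994a}. Nothing in your colimit sketch approximates this step.

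Your derivation of~(2) from~(1) and~(3) is circular. You propose to decompose ``an endofinite cohomological functor'' and read off the decomposition of its function, but an arbitrary cohomological function $\chi$ is not handed to you as $\chi_H$ for any $H$; part~(3) only produces such an $H$ once $\chi$ is already known to be irreducible. Moreover the identity $\chi_{H\oplus H'}=\chi_H+\chi_{H'}$ fails in general (for instance $\chi_{H\oplus H}=\chi_H$), so even when $H$ exists the passage from a direct-sum decomposition of $H$ to a sum of functions needs separate analysis. The paper instead proves~(2) on the additive-function side: via Lemma~\ref{le:res} it reduces to decomposing additive functions on $\Ab\C$, which is done in \cite{CB1994a} (alternatively Proposition~\ref{pr:add}), again using the Serre quotient $\Ab\C/\Sc_\chi$.
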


Examples of endofinite cohomological functors arise from representable
functors of the form $\Hom(-,X)$ when $\C$ is a Hom-finite $k$-linear
category. Thus Theorem~\ref{th:1} can be deduced from
Theorem~\ref{th:2}. The following remark shows that in some
appropriate setting each endofinite cohomological functor is
representable.

\begin{rem}\label{re:representable}
  Let $\T$ be a compactly generated triangulated category and $\C$ be
  the full subcategory formed by all compact objects. Then each
  endofinite cohomological functor $H\colon\C^\op\to\Ab$ is isomorphic
  to $\Hom(-,X)|_\C$ for some object $X$ in $\T$, which is unique up
  to an isomorphism and represents the functor\footnote{The functor is
    cohomological since $H$ is an injective object in the category of
    additive functors $\C^\op\to\Ab$; see the proof of
    Theorem~\ref{th:2}. Thus Brown's representabilty theorem applies.}
  \[\T^\op\lto\Ab, \quad C\mapsto\Hom(\Hom(-,C)|_\C,H).\] Thus
  cohomological functions are ``represented'' by objects in this
  setting. For specific examples, see \cite{KR2001}.
\end{rem}

Next we consider the set of irreducible cohomological functions
$\Ob\C\to\bbN$ and endow it with the Ziegler topology; see
Proposition~\ref{pr:top}.  The quotient
\[\Sp\C=\{\chi\colon \Ob\C\to\bbN\mid \text{$\chi$
  irreducible and cohomological}\}/\Si\] with respect to the action of
the suspension is by definition the \emph{space of cohomological
  functions} on $\C$. Thus the points of $\Sp\C$ are
equivalence classes of the form $[\chi]=\{\chi\comp\Si^n\mid n\in\bbZ\}$.

Take as an example the category of perfect complexes
$\D^b(\proj A)$ over a commutative ring $A$. A prime ideal
$\frp\in\Spec A$ with residue field $k(\frp)$ yields an irreducible
cohomological function
\[\chi_{k(\frp)}\colon\Ob\D^b(\proj A)\lto\bbN,\quad
X\mapsto\length_{k(\frp)}\Hom(X,k(\frp)),\]
where $k(\frp)$ is viewed as a complex concentrated in degree zero
\begin{thm}
  The map $\Spec A\to\Sp\D^b(\proj A)$ sending $\frp$ to
  $[\chi_{k(\frp)}]$ is injective and closed with respect to the Hochster
  dual of the Zariski topology on $\Spec A$.
\end{thm}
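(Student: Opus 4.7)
The plan is to verify the three assertions of the theorem separately: that each $\chi_{k(\frp)}$ is an irreducible cohomological function, that the assignment $f\colon\frp\mapsto[\chi_{k(\frp)}]$ is injective, and that it carries Hochster-closed subsets of $\Spec A$ to Ziegler-closed subsets of $\Sp\D^b(\proj A)$.

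For the first two points, set $H_\frp:=\Hom_{\D(A)}(-,k(\frp))|_{\D^b(\proj A)}$. For a perfect complex $X$, d\'evissage from $H_\frp(A)=k(\frp)$ shows that $H_\frp(X)$ is finite-dimensional over $k(\frp)$ and vanishes on $\Si^n X$ for $|n|$ large. By Remark~\ref{re:representable}, $\End(H_\frp)\cong\End_{\D(A)}(k(\frp))=k(\frp)$, a field, hence local; so $H_\frp$ is indecomposable endofinite, and Theorem~\ref{th:2}(3) gives that $\chi_{k(\frp)}=\chi_{H_\frp}$ is irreducible cohomological. For injectivity, let $K(a):=\cone(a\cdot\colon A\to A)$ for $a\in A$. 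Applying $\RHom(-,k(\frp))$ to the defining triangle produces an exact triangle whose third term is $k(\frp)\xrightarrow{a}k(\frp)$, so $\RHom(K(a),k(\frp))$ vanishes precisely when $a$ acts invertibly on $k(\frp)$, that is, when $a\notin\frp$. Therefore
\[
\frp=\{\,a\in A\mid \chi_{k(\frp)}(K(a)[n])\ne 0\text{ for some }n\in\bbZ\,\},
\]
and the right-hand side depends only on the equivalence class $[\chi_{k(\frp)}]$, proving injectivity.

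For the closed-map property, recall that Hochster-closed subsets of $\Spec A$ are exactly the intersections $Z=\bigcap_\alpha D(I_\alpha)$ of quasi-compact Zariski opens, each $I_\alpha=(a_{\alpha,1},\ldots,a_{\alpha,n_\alpha})$ finitely generated. For such $I=(a_1,\ldots,a_n)$ let $K_I$ denote the Koszul complex. Koszul self-duality identifies $\RHom_A(K_I,k(\frp))$, up to shift, with $k(\frp)\lotimes_A K_I$, i.e.\ the Koszul complex over $k(\frp)$ on the images $\bar a_i$; this is acyclic iff some $\bar a_i$ is a unit, iff $I\not\subseteq\frp$. Hence
\[
C_I:=\{\,[\chi]\in\Sp\D^b(\proj A)\mid \chi(K_I[n])=0\text{ for all }n\in\bbZ\,\}
\]
is Ziegler-closed (by the definition recorded in Proposition~\ref{pr:top}) and satisfies $f^{-1}(C_I)=D(I)$. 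Combined with injectivity, $f(D(I))=C_I\cap f(\Spec A)$, and therefore $f(Z)=f(\Spec A)\cap\bigcap_\alpha C_{I_\alpha}$.

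The main remaining obstacle is to show that $f(\Spec A)$ itself is Ziegler-closed; the displayed intersection then exhibits $f(Z)$ as closed. My plan is to use Remark~\ref{re:representable} to reduce this to a characterization of the residue-field objects $k(\frp)\in\D(A)$ among all indecomposable endofinite objects that represent endofinite cohomological functors on $\D^b(\proj A)$: the representing object should be concentrated in a single cohomological degree and have a field as its endomorphism ring. These intrinsic conditions should translate, via evaluation on shifts of $A$ and on suitable Koszul/truncation complexes, into Ziegler-closed conditions on $\chi$; this step is where the structure theory of finite-endolength objects (Crawley-Boevey) is expected to do the heavy lifting.
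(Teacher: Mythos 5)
Your argument is correct as far as it goes, and the pieces you do prove are done by a route slightly different from the paper's: for the closed-map property you use explicit Koszul complexes $K_I$ to realize the Hochster-closed subsets, whereas the paper invokes Thomason's classification of thick subcategories of $\per A$ directly. These are essentially equivalent (Koszul complexes on generators of finitely generated ideals generate the thick subcategories appearing in Thomason's theorem), and either one correctly reduces the problem to showing that $f(\Spec A)$ is Ziegler-closed. Your injectivity argument via $K(a)$ is also sound and a bit more constructive than the paper's one-line appeal to the bijection of Theorem~\ref{th:2}.

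The genuine gap is the one you flag yourself: you do not prove that $f(\Spec A)$ is a closed subset of $\Sp\per A$, and this is the heart of the matter. The paper closes it concretely by exhibiting $\Im\rho$ as the intersection $\U\cap\U_1$ of two closed sets: $\U=\{[\chi]\mid\chi(\Si^nA)\neq0\text{ for at most one }n\}$, which is closed by Proposition~\ref{pr:ZspA} and picks out the cohomological functions coming from $A$-modules viewed as stalk complexes; and $\U_1=\{[\chi]\mid\chi(\Si^nA)\le1\text{ for all }n\}$, which is closed by Lemma~\ref{le:finite} and bounds the endolength of the representing module by $1$. Combined with Crawley-Boevey's identification of the indecomposable $A$-modules $X$ with $\length_{\End(X)}X\le1$ as precisely the residue fields $k(\frp)$, this yields $\Im\rho=\U\cap\U_1$. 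Your sketch points in the right direction (characterize the residue-field objects intrinsically among endofinite objects and translate into Ziegler-closed conditions), but without producing these two specific closed conditions the proof is incomplete, and neither Remark~\ref{re:representable} nor Theorem~\ref{th:2}(3) alone supplies them: Lemma~\ref{le:finite} is needed to make the endolength bound a closed condition, and Proposition~\ref{pr:ZspA} is needed for the single-degree condition.
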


We prove this result in \S\ref{se:per} by analysing the Ziegler
spectrum \cite{Kr2000,Zi1984} of the category of perfect complexes. A
general method for computing the space $\Sp\C$ of cohomological
functions is to compute the Krull--Gabriel filtration
\cite{Ga1962,GJ1981} of the abelianisation $\Ab\C$ (see \cite{Fr,V} or
\S\ref{se:main_thm}).

A specific example is the algebra $k[\e]$ of dual numbers over a
field $k$. Each complex
\[X_n\colon\;\cdots\to 0\to k[\e]\xto{\e} k[\e]\xto{\e}\cdots\xto{\e}k[\e]\to
0\to \cdots\] of length $n$ corresponds to an isolated point in
$\Sp\D^b(\proj k[\e])$ and their closure yields exactly one extra point
corresponding to the residue field. Thus
\[\Sp\D^b(\proj k[\e])=\{[\chi_{X_n}]\mid n\in\bbN\}\cup\{[\chi_k]\}.\]
This example is of particular interest because the derived category
$\D^b(\proj k[\e])$ is discrete in the sense of Vossieck
\cite{Vo2001}, that is, there are no continuous families of
indecomposable objects. On the other hand, there are infinitely many
indecomposable objects, even up to shift.  The Krull--Gabriel
dimension explains this behaviour because it measures how far an
abelian category is away from being a length category. For instance, a
triangulated category $\C$ is locally finite (see \cite{Kr2012}
or \S\ref{se:per}) iff the Krull--Gabriel dimension of $\Ab\C$ equals
at most $0$.

\begin{prop}
  The abelianisation $\Ab\D^b(\proj k[\e])$ has Krull--Gabriel
  dimension equal to $1$.
\end{prop}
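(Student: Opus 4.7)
The plan is to determine $\KGdim\Ab\C$ for $\C=\D^b(\proj k[\e])$ by analysing the Krull--Gabriel filtration via the Ziegler spectrum, using the preceding computation of $\Sp\C$. Write $\mathcal A=\Ab\C$ with Krull--Gabriel filtration $\mathcal A_{-1}\subseteq\mathcal A_0\subseteq\mathcal A_1\subseteq\cdots$, where $\mathcal A_0$ is the Serre subcategory of objects of finite length.

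For the lower bound $\KGdim\mathcal A\ge 1$, I would show that $\mathcal A$ fails to be a length category. By the preceding theorem $\Sp\C=\{[\chi_{X_n}]\mid n\in\bbN\}\cup\{[\chi_k]\}$ with $[\chi_k]$ a non-isolated point lying in the closure of $\{[\chi_{X_n}]\}$. Under the standard correspondence between isolated points of the Ziegler spectrum of a locally coherent category and simple finite-length objects in its category of finitely presented objects, the presence of a non-isolated point forces the existence of coherent functors without finite composition series; in particular $\chi_k$ is not realised in $\mathcal A_0$. Hence $\mathcal A\ne \mathcal A_0$, so by the criterion recalled in the excerpt (locally finite iff $\KGdim\le 0$) we conclude $\KGdim\mathcal A\ge 1$.

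For the upper bound $\KGdim\mathcal A\le 1$, I would show that the Gabriel quotient $\mathcal A/\mathcal A_0$ is a length category, i.e.\ $\KGdim(\mathcal A/\mathcal A_0)\le 0$. The simple objects of finite length in $\mathcal A$ are, up to shift, the functors $S_{X_n}=\Hom(-,X_n)/\rad\Hom(-,X_n)$, in bijection with the isolated points $[\chi_{X_n}]$ of $\Sp\C$. Passing to $\mathcal A/\mathcal A_0$ collapses the Ziegler spectrum to the single point $[\chi_k]$, and since $\chi_k$ corresponds to the simple $k[\e]$-module $k=k[\e]/\e$, whose values on $\C$ are finite-dimensional over $k$, its image in the quotient is a simple functor of finite length. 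Every object of $\mathcal A/\mathcal A_0$ then admits a finite composition series whose factors are shifts of this simple, so $\mathcal A_1=\mathcal A$.

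The main obstacle is carrying out the upper bound rigorously: one must verify that killing the Serre subcategory $\mathcal A_0$ really leaves a length category, and not merely a category whose Ziegler spectrum happens to be small. This rests on the dictionary between locally coherent Grothendieck categories and their Ziegler spectra (the Herzog--Krause machinery that underlies the Ziegler topology on $\Sp\C$ introduced in the excerpt), together with an explicit analysis of the residual pure injective $\chi_k$ showing that its image in the quotient is genuinely of finite length over its endomorphism ring. By contrast, the lower bound is essentially routine once $\Sp\C$ is known.
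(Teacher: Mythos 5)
Your proposal has a circularity problem and, as you yourself note, a genuine gap in the upper bound. In the paper, the description $\Sp\per k[\e]=\{[\chi_{X_{0,r}}]\mid r\in\bbN\}\cup\{[\chi_k]\}$ is Corollary~\ref{co:dim1}, and it is \emph{derived} from Proposition~\ref{pr:dim1} via the Cantor--Bendixson filtration of $\Zsp\C$; it is not an independent input. So you cannot invoke ``the preceding computation of $\Sp\C$'' to establish the Krull--Gabriel dimension without arguing in a circle. (The statement does appear earlier in the introduction, but only as a summary of results proved later.)

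Even setting circularity aside, the step from ``the Ziegler spectrum of $\A/\A_0$ is small'' to ``$\A/\A_0$ is a length category'' does not follow from general Herzog--Krause machinery, and you flag this honestly. Worse, $\Sp\C$ only parametrises the \emph{endofinite} indecomposable pure-injectives, i.e.\ the irreducible cohomological \emph{functions}; a priori $\Zsp\C$ could contain further, non-endofinite points, and those are precisely what control the higher Krull--Gabriel layers. Knowing $\Sp\C$ does not bound $\Zsp\C$. The paper's actual argument goes in the opposite direction and is entirely direct: it lists the indecomposables $X_{n,r}$ of $\per k[\e]$, writes down the Auslander--Reiten triangles, and shows by a hands-on analysis (using the almost-split property plus Serre duality) that the lattice of subobjects of $H_{X_{n,0}}$ in $\Ab\C$ is a doubly-infinite chain, hence $H_{X_{n,0}}$ is simple in the quotient $\Ab\C/(\Ab\C)_0$; an induction on $r$ through the AR sequences then shows $H_{X_{n,r}}$ has length $r+1$ there, whence $(\Ab\C)_1=\Ab\C$. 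Only afterwards is this translated into topology to deduce $\Sp\C$. If you want a correct version of your plan, you would need to first establish the structure of $\Zsp\per k[\e]$ (not just $\Sp$) by independent means, which in practice amounts to doing the same representation-theoretic computation.
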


Note that the Krull--Gabriel dimension of the free abelian category
$\Ab A$ over an Artin algebra $A$ \cite{Gr1975} behaves differently;
it equals $0$ iff $A$ is of finite representation type by a result of
Auslander \cite{Au1974} and is greater than $1$ otherwise
\cite{He1997,Kr1998}.

As a final example, let us describe the cohomological functions for the
category $\coh\bbP^1_k$ of coherent sheaves on the projective line
over a field $k$.

\begin{prop}\label{pr:coh}
  The abelianisation $\Ab\D^b(\coh\bbP^1_k)$ has Krull--Gabriel
  dimension equal to $2$ and
\[\Sp\D^b(\coh\bbP^1_k)=\{[\chi_X]\mid X\in\coh\bbP^1_k\text{
  indecomposable}\}\cup\{[\chi_{k(t)}]\}.\]
\end{prop}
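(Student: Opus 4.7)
The strategy is to transport the problem to the module category of the Kronecker algebra via Beilinson's tilting equivalence. Since $\Oc\oplus\Oc(1)$ is a tilting bundle on $\bbP^1_k$ with endomorphism algebra the Kronecker path algebra $A=k\tilde A_1$, one obtains an equivalence $\D^b(\coh\bbP^1_k)\simeq\D^b(\mod A)$ of triangulated categories. Both the Krull--Gabriel dimension of the abelianisation and the space $\Sp$ are invariants of the triangulated structure, so it suffices to treat the Kronecker side.

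For the computation of $\Sp$, I would apply Theorem~\ref{th:2} together with Remark~\ref{re:representable} to the compactly generated ambient category $\T=\D(\Mod A)$, whose compact objects form $\C=\D^b(\mod A)$. Each indecomposable endofinite cohomological functor on $\C^\op$ is then represented by a uniquely determined indecomposable object of $\T$ whose restriction to compacts is endofinite. Because $A$ is hereditary, every object of $\T$ splits as a coproduct of shifted modules, so such representing objects are, up to a shift, indecomposable endofinite $A$-modules. The latter are classified by Ringel and Crawley-Boevey: they are the finite-dimensional indecomposables together with the unique generic module $G$, which satisfies $\End(G)=k(t)$ and has endolength $2$. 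Under the inverse of Beilinson's equivalence, the finite-dimensional indecomposables correspond (up to shift) to the indecomposable coherent sheaves on $\bbP^1_k$, while $G$ corresponds to the sheaf of rational functions $k(t)$ placed in degree zero. Passing to the quotient by $\Si$ yields exactly the parametrisation claimed.

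For the Krull--Gabriel dimension I would first reduce from $\Ab\D^b(\coh\bbP^1_k)$ to $\Ab\coh\bbP^1_k$ (equivalently $\Ab\mod A$). Since $\coh\bbP^1_k$ is hereditary, every object of $\D^b(\coh\bbP^1_k)$ decomposes as a direct sum of shifts of sheaves; this splitting propagates to the abelianisation and shows that the two Krull--Gabriel dimensions coincide. The dimension of the functor category over the Kronecker algebra was computed by Geigle to equal $2$, with Cantor--Bendixson stratification: finite-dimensional indecomposables are isolated points (rank $0$); the Pr\"ufer and adic modules attached to each homogeneous tube have rank $1$; and the generic module is the unique remaining point, of rank $2$.

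The principal obstacle is the precise dictionary in the second step: one must verify that $G$ represents (on compacts) the functor $\Hom(-,k(t))$ on $\D^b(\coh\bbP^1_k)$ and that the length of this functor over $\End(G)=k(t)$ really coincides with the function $\chi_{k(t)}$ defined in the paper. The remaining items --- the splitting in the hereditary derived category and the appeal to Geigle's computation --- are standard and require only bookkeeping.
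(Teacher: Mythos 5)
Your proof is correct and follows the same skeleton as the paper's: tilt to the Kronecker algebra, invoke heredity to pass between $\Ab\per A$ and $\Ab A$ (this is exactly Proposition~\ref{pr:hered}), and quote Geigle's computation $\KGdim\Ab A=2$. The one place where you genuinely diverge is the description of $\Sp$. The paper obtains the points of $\Zsp\D^b(\coh\bbP^1_k)$ from the Krull--Gabriel filtration (mirroring the argument of Corollary~\ref{co:dim1}, via Proposition~\ref{pr:ZspKGdim}) and then reads off which of those points are endofinite. You instead go through Remark~\ref{re:representable}: represent each indecomposable endofinite cohomological functor by an object of the compactly generated category $\D(\Mod A)$, use heredity to split that object into a shift of an indecomposable module, observe that endofiniteness of the functor forces endofiniteness of the module, and then quote the Ringel/Crawley-Boevey classification of indecomposable endofinite Kronecker modules (finite-dimensional indecomposables plus the unique generic module $G$ with $\End(G)\cong k(t)$). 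This is arguably cleaner for the purpose at hand, since it never needs to mention the Pr\"ufer and adic modules that populate $\Zsp A$ but not $\Sp\per A$; the paper's route gives the richer information about the full Ziegler spectrum at the cost of then having to filter. Two small points worth tightening: you should note that compacts in $\D(\Mod A)$ are perfect complexes, which coincide with $\D^b(\mod A)$ precisely because $A$ is hereditary (finite global dimension); and the "obstacle" you flag --- that $G$ corresponds under Beilinson's equivalence to $k(t)$ in degree zero and represents $\Hom(-,k(t))$ --- is a short computation ($\RHom(\Oc\oplus\Oc(1),k(t))$ is the $(1,1)$-dimensional $k(t)$-representation of the Kronecker quiver, i.e.\ the generic module), so it is a genuine but easily closed gap; the paper leaves the same detail implicit.
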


These examples illustrate in the triangulated context the following
representation theoretic paradigm:
\begin{center}
\begin{tabular}{lcr}
finite type&\;\;$\longleftrightarrow$\;\;&Krull--Gabriel dimension $= 0$\\
discrete type&$\longleftrightarrow$&Krull--Gabriel dimension $\le 1$\\
continuous families exist&$\longleftrightarrow$&Krull--Gabriel dimension $> 1$
\end{tabular}
\end{center}

\subsection*{Acknowledgements}
This work was inspired by lectures of David Eisenbud (Guanajuato, May
2012) and Peter Webb (Seattle, August 2012); see also \cite{We2013}. I
wish to thank Peter Webb for various helpful comments.

\section{Proof of the main theorem}\label{se:main_thm}

In this section Theorem~\ref{th:2} is proved. We deduce it from work
of Crawley-Boevey on endofinite objects in locally finitely presented
abelian categories \cite{CB1994a,CB1994b}. We begin with some preparations.

\subsection*{The abelianisation}
Following Freyd \cite[\S3]{Fr} and Verdier \cite[II.3]{V}, we consider
the \emph{abelianisation} $\Ab\C$ of $\C$ which is the abelian
category of additive functors $F\colon\C^\op\to\Ab$ into the category
$\Ab$ of abelian groups that admit a copresentation \[0\lto
F\lto\Hom(-,A)\lto\Hom(-,B).\] In addition, we work in the category
$\Mod \C$ of additive functors $\C^\op \to\Ab$.  This is a locally
finitely presented abelian category and the abelianisation $\Ab\C$
identifies with the full subcategory of finitely presented objects of
$\Mod\C$; see \cite{CB1994b} for details.

\subsection*{Additive functions}

Let us introduce the analogue of a cohomological function for the
abelianisation of $\C$.

\begin{defn}
  A function $\chi\colon\Ob\Ab\C\to\bbN$ is called
  \emph{additive}\footnote{The term \emph{additive} reflects condition
    (1), while (2) is added for technical reasons.}  provided that
\begin{enumerate}
\item  $\chi(F)=\chi(F')+\chi(F'')$ if $0\to F'\to F\to F''\to 0$ is an exact sequence, and
\item for each object $F$ there is some $n\in\bbZ$ such that $\chi(\Si^n F)=0$.
\end{enumerate}
\end{defn}

We show that additive and cohomological functions are closely related.

\begin{lem}\label{le:res}
  Restricting a function $\chi\colon\Ob\Ab\C\to\bbN$ to $\Ob\C$ by
  setting $\chi(C)=\chi(\Hom(-,C))$ gives a natural bijection between
\begin{enumerate}
\item[--]  the
  additive functions $\Ob\Ab\C\to\bbN$, and 
\item[--] the cohomological functions $\Ob\C\to\bbN$.
\end{enumerate}
\end{lem}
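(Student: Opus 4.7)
The plan is to construct mutually inverse maps between additive functions on $\Ob\Ab\C$ and cohomological functions on $\Ob\C$. One direction (restriction) is nearly formal, using that the Yoneda embedding $\C\to\Ab\C$ sends exact triangles to long exact sequences of representable functors. The other direction (extension) exploits the fact that every object of $\Ab\C$ appears as a kernel along the Yoneda image of such a long exact sequence.

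For restriction, given an additive $\chi\colon\Ob\Ab\C\to\bbN$, set $\bar\chi(C)=\chi(\Hom(-,C))$. Conditions (1) and (2) of Definition~\ref{de:fct} are immediate from the Yoneda embedding. For condition (3), consider an exact triangle $A\to B\to C\to$ and the induced long exact sequence of representables in $\Ab\C$, labelled $\cdots\to X_{-1}\to X_0\to X_1\to\cdots$ with $\chi(X_0)=0$. Setting $K_i=\Ker(X_i\to X_{i+1})=\Im(X_{i-1}\to X_i)$ at each position, the short exact sequences $0\to K_i\to X_i\to K_{i+1}\to 0$ combined with additivity give $\chi(X_i)=\chi(K_i)+\chi(K_{i+1})$. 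The assumption $\chi(X_0)=0$ forces $\chi(K_0)=\chi(K_1)=0$, and a straightforward induction recovers the partial alternating sum condition, with each $K_{n+1}$ playing the role of the non-negative integer bounded below by zero and vanishing exactly when $\chi(X_n)=0$.

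For extension, given a cohomological $\chi\colon\Ob\C\to\bbN$ and $F\in\Ob\Ab\C$, choose a copresentation $0\to F\to\Hom(-,A)\to\Hom(-,B)$ arising from a morphism $f\colon A\to B$, and complete to a triangle $A\to B\to C\to\Si A$. Rotating this triangle and applying Yoneda produces an infinite exact sequence in $\Ab\C$ in which $F$ appears as the kernel of $\Hom(-,A)\to\Hom(-,B)$. By condition (2), some representable in this sequence has $\chi$-value zero, and condition (3) then yields a well-defined non-negative integer $\chi(K_j)$ at every kernel position $j$ along the LES, anchored at any vanishing position. Define $\tilde\chi(F)$ to be the value at the position where $K_j=F$.

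The main obstacle is verifying that $\tilde\chi$ is additive on short exact sequences $0\to F'\to F\to F''\to 0$ in $\Ab\C$. The approach is to lift such a SES to a compatible system of morphisms of triangles in $\C$, a horseshoe-type construction made possible by the projective behaviour of representables in $\Ab\C$, from which a matching of alternating sums for $F'$, $F$, $F''$ follows. Independence of $\tilde\chi(F)$ from the chosen copresentation is a special case, obtained by comparing any two copresentations via a common refinement. Once additivity is established, showing that restriction and extension are mutually inverse is formal: they agree on representables (where $F=\Hom(-,C)$ arises as a $K_j$ in the trivial LES coming from $C\xto{1}C\to 0$), and this determines both maps by additivity together with the fact that every object of $\Ab\C$ has a copresentation by representables.
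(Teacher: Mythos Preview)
Your outline matches the paper's proof closely: both directions are handled the same way, and the extension $\tilde\chi(F)$ is defined via the alternating sum along the long exact sequence induced by a presenting triangle, with the remaining work being well-definedness plus additivity.

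The one place where your organization is shakier than the paper's is well-definedness. You write that independence of $\tilde\chi(F)$ from the chosen copresentation is ``a special case'' of the additivity argument; but additivity of $\tilde\chi$ only makes sense once $\tilde\chi$ is already a function of $F$, so as written this is circular. Your horseshoe construction, applied to $0\to F\xto{\id}F\to 0\to 0$, does not compare two \emph{arbitrary} copresentations of $F$: it just reproduces whichever one you fed in. The paper avoids this by proving independence first and directly, via a Schanuel-type lemma for triangulated categories (stated and proved in an appendix): if two exact triangles $A\to B\to C\to$ and $A'\to B'\to C'\to$ present the same $F$, then $A\oplus B'\oplus C\cong A'\oplus B\oplus C'$, so the two alternating sums agree term by term after regrouping. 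With well-definedness in hand, the paper then, like you, waves at ``standard arguments involving resolutions'' (i.e.\ your horseshoe construction) for additivity.
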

\begin{proof}
  Let $\chi\colon\Ob\Ab\C\to\bbN$ be an additive function.  An exact
  triangle $A\to B\to C\to $ in $\C$ yields in $\Ab\C$ an exact
  sequence \[\cdots\to \Hom(-,A)\to\Hom(-,B)\to\Hom(-,C)\to\Hom(-,\Si
  A)\to \cdots.\] Using this sequence it is easily checked that the
  restriction of $\chi$ to $\C$ is a cohomological function.

  Conversely, given a cohomological function $\chi\colon\Ob\C\to\bbN$,
  we extend it to a function $\Ob\Ab\C\to\bbN$ which again we denote
  by $\chi$. Fix $F$ in $\Ab\C$ with copresentation as above induced
  by an exact triangle $A\to B\to C\to $ in $\C$, and choose
  $n\in\bbZ$ such that $\chi(\Si^n(A\oplus B\oplus C))=0$.  Then
  define
\[\chi(F)=\begin{cases}
\sum_{i= 0}^n\big((-1)^i\chi(\Si^iA)-
(-1)^i\chi(\Si^iB)+(-1)^i\chi(\Si^iC)\big), &\text{if $n\ge 0$;}\\
\sum_{i= -1}^n\big((-1)^{i+1}\chi(\Si^iA)-
(-1)^{i+1}\chi(\Si^iB)+(-1)^{i+1}\chi(\Si^iC)\big), &\text{if $n< 0$.}
\end{cases}\] This gives a non-negative integer and does not depend on
$n$ since $\chi$ is a cohomological function. Also, $\chi(F)$ does not
depend on the choice of the exact triangle which presents $F$ by a
variant of Schanuel's lemma; see Lemma~\ref{le:shanuel}. Note that
$\chi(\Hom(-,C))=\chi(C)$ for each $C$ in $\C$. Standard arguments
involving resolutions show that $\chi$ is additive.

Clearly, restricting from $\Ab\C$ to $\C$ and extending from $\C$ to
$\Ab\C$ are mutually inverse operations.
\end{proof}

There is a parallel between functions and functors. The analogue of
Lemma~\ref{le:res} for functors is due to Freyd.
 
\begin{lem}[Freyd \cite{Fr}]\label{le:Freyd}
  Restricting a functor $F\colon(\Ab\C)^\op\to\Ab$ to $\C$ by
  setting $F(C)=F(\Hom(-,C))$ gives a natural bijection between
\begin{enumerate}
\item[--] the exact functors $(\Ab\C)^\op\to\Ab$, and
\item[--] the cohomological
  functors $\C^\op\to\Ab$.
\end{enumerate}
\end{lem}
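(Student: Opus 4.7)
The plan is to parallel the construction carried out for Lemma~\ref{le:res}, replacing the length-additive formula with a categorical one. Set $h\colon\C\to\Ab\C$, $C\mapsto\Hom(-,C)$: this functor sends each exact triangle $A\to B\to C\to$ to the infinite exact sequence $\cdots\to\Hom(-,\Si^{-1}C)\to\Hom(-,A)\to\Hom(-,B)\to\Hom(-,C)\to\cdots$ in $\Ab\C$, and by definition every object of $\Ab\C$ has a copresentation whose differential comes, via Yoneda, from a morphism in $\C$. The easy direction of the lemma is then restriction: if $F\colon(\Ab\C)^\op\to\Ab$ is exact, then applying $F$ to the long exact sequence above immediately yields a long exact sequence in $\Ab$, so $F\comp h$ is cohomological.

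For the converse, given a cohomological $G\colon\C^\op\to\Ab$, I define an extension $\tilde G\colon(\Ab\C)^\op\to\Ab$ on objects by
\[\tilde G(F)\;:=\;\Coker\bigl(G(f)\colon G(B)\to G(A)\bigr),\]
where $0\to F\to\Hom(-,A)\xto{f_*}\Hom(-,B)$ is a copresentation coming from $f\colon A\to B$. Embedding $f$ in a triangle $\Si^{-1}C\to A\xto{f}B\to C$ and applying $G$ places $\Coker G(f)$ inside the cohomological long exact sequence for $G$. Independence of the choice of copresentation follows from a Schanuel-type argument analogous to Lemma~\ref{le:shanuel}, and functoriality follows by lifting morphisms $F'\to F$ in $\Ab\C$ to morphisms of copresentations (using that representables are projective in $\Mod\C$) and applying $G$. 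Specialising to $F=\Hom(-,C)$ via the trivial copresentation $0\to\Hom(-,C)\to\Hom(-,C)\to 0$ (i.e.\ $B=0$) gives $\tilde G\comp h\cong G$, so restriction and extension are mutually inverse once $\tilde G$ is shown to be exact.

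Exactness of $\tilde G$ is the main obstacle. Given a short exact sequence $0\to F'\to F\to F''\to 0$ in $\Ab\C$ I would lift the three copresentations of $F',F,F''$ to a $3\times 3$ arrangement of triangles in $\C$ using the octahedral axiom, so that the connecting data in $\C$ induces the given sequence in $\Ab\C$. Applying $G$ produces a grid of long exact sequences in $\Ab$, and a diagram chase then extracts the required short exact sequence $0\to\tilde G(F'')\to\tilde G(F)\to\tilde G(F')\to 0$. The delicate point is arranging the lift so that $\tilde G$, evaluated on the three copresentations, is computed compatibly; once this is done, combined with the previous paragraph, the bijection is established.
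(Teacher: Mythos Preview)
Your construction is correct, but it is the paper's one-line inverse in disguise: the functor you build satisfies $\tilde G(F)=\Coker\bigl(G(B)\to G(A)\bigr)\cong\ker\bigl(G(\Si^{-1}C)\to G(\Si^{-1}B)\bigr)=\Hom(F,G)$, where the middle isomorphism comes from the long exact sequence of $G$ on the triangle $\Si^{-1}C\to A\to B\to C$ and the last equality is Yoneda applied to the presentation $\Hom(-,\Si^{-1}B)\to\Hom(-,\Si^{-1}C)\to F\to 0$. The paper simply \emph{declares} the inverse to be $H\mapsto\Hom(-,H)$ and stops there. Recognising your $\tilde G$ as $\Hom(-,G)$ buys you well-definedness and functoriality for free (it is a $\Hom$-functor), and reduces exactness to the statement that a cohomological $G$ is fp-injective in $\Mod\C$, i.e.\ $\Ext^1(F,G)=0$ for $F\in\Ab\C$; this follows at once from the three-term projective resolution of $F$ by representables coming from a triangle.

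By contrast, your route forces you to check everything by hand. The Schanuel reference for well-definedness is slightly misleading: Lemma~\ref{le:shanuel} only gives an isomorphism $A\oplus B'\oplus C\cong A'\oplus B\oplus C'$ of \emph{objects}, whereas you need an isomorphism of \emph{cokernels}; what actually makes it work is the comparison-of-coresolutions argument using that representables are injective in $\Ab\C$. Likewise your octahedral $3\times3$ chase for exactness can be made to work, but it is exactly the hand verification that the $\Hom(-,G)$ description renders unnecessary. So: right idea, but you are reproving the representability of your own functor.
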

\begin{proof}
  The inverse map sends a cohomological functor $H\colon\C^\op\to\Ab$
  to the exact functor $\Hom(-,H)\colon (\Ab\C)^\op\to\Ab$.
\end{proof}

An endofinite cohomological functor $H\colon\C^\op\to\Ab$ induces two
functions:
\begin{gather*}
\chi_H\colon\Ob\C\lto\bbN,\quad C\mapsto\length_{\End(H)}H(C),\\
\hat\chi_H\colon\Ob\Ab\C\lto\bbN,\quad F\mapsto\length_{\End(H)}\Hom(F,H).
\end{gather*}

\begin{lem}\label{le:fct}
  The function $\chi_H$ is cohomological and $\hat\chi_H$ is additive.
\end{lem}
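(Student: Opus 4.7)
The plan is to establish the claim about $\hat\chi_H$ first and then deduce the claim about $\chi_H$ via Lemma~\ref{le:res}. The essential input is Freyd's Lemma~\ref{le:Freyd}: because $H$ is cohomological, the induced contravariant functor $\Hom(-,H)\colon(\Ab\C)^\op\to\Ab$ is exact. This is what makes $\hat\chi_H$ behave like an Euler characteristic on $\Ab\C$.

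For additivity, given a short exact sequence $0\to F'\to F\to F''\to 0$ in $\Ab\C$, applying the exact functor $\Hom(-,H)$ yields a short exact sequence of $\End(H)$-modules
\[0\lto\Hom(F'',H)\lto\Hom(F,H)\lto\Hom(F',H)\lto 0,\]
so additivity of length gives $\hat\chi_H(F)=\hat\chi_H(F')+\hat\chi_H(F'')$. To see that each $\Hom(F,H)$ has finite length over $\End(H)$, I would use a copresentation $0\to F\to\Hom(-,A)\to\Hom(-,B)$: factoring through the image produces a short exact sequence $0\to F\to\Hom(-,A)\to G\to 0$, and Yoneda's identification $\Hom(\Hom(-,A),H)\cong H(A)$ together with exactness of $\Hom(-,H)$ realises $\Hom(F,H)$ as a quotient of the finite length module $H(A)$. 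For the vanishing condition I would use that the suspension on $\Ab\C$ is compatible with the Yoneda embedding in the sense $\Si\Hom(-,C)=\Hom(-,\Si C)$, so $\Si^n F$ admits the copresentation $0\to\Si^n F\to\Hom(-,\Si^n A)\to\Hom(-,\Si^n B)$; picking $n$ with $H(\Si^n A)=0$, which exists by condition~(2) in the definition of endofinite, forces $\Hom(\Si^n F,H)=0$ by the same subquotient argument.

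For the cohomological statement on $\chi_H$, Yoneda gives
\[\hat\chi_H(\Hom(-,C))=\length_{\End(H)}\Hom(\Hom(-,C),H)=\length_{\End(H)}H(C)=\chi_H(C),\]
so $\chi_H$ is precisely the restriction of the additive function $\hat\chi_H$ to $\C$. Applying Lemma~\ref{le:res} then identifies $\chi_H$ as a cohomological function. I do not anticipate a serious obstacle; the one point that requires a little care is invoking Freyd's lemma to extend $H$ to an exact contravariant functor on the abelianisation, and checking that the natural $\End(H)$-action on $\Hom(F,H)$ is compatible with the short exact sequences above — both of which are essentially formal once the Yoneda--Freyd dictionary is in place.
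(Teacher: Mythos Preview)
Your proof is correct and follows the same route as the paper: establish exactness of $\Hom(-,H)$ on $\Ab\C$ via Freyd's lemma, deduce additivity of $\hat\chi_H$, identify $\chi_H$ as its restriction via Yoneda, and apply Lemma~\ref{le:res}. You spell out the finiteness and vanishing verifications that the paper leaves implicit, but the argument is essentially identical.
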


\begin{proof}
  The functor $\Hom(-,H)\colon(\Ab\C)^\op\to\Ab$ is exact since $H$ is
  cohomological.  It follows that $\hat\chi_H$ is additive. The
  restriction of $\hat\chi_H$ to $\C$ equals $\chi_H$. Thus $\chi_H$
  is cohomological by Lemma~\ref{le:res}.
\end{proof}

\subsection*{Proof of the main theorem}
The proof of our main result is based on work of Crawley-Boevey, but
we provide some complementary material in an appendix.

\begin{proof}[Proof of Theorem~\ref{th:2}]
  An endofinite cohomological functor $H\colon\C^\op\to\Ab$ is a
  finite endolength injective object in $\Mod\C$, as defined in
  \cite{CB1994b}.  The term `finite endolength' refers to the fact
  that $\Hom(F,H)$ has finite length as $\End(H)$-module for each $F$
  in $\Ab\C$; see Lemma~\ref{le:fct}. The injectivity follows from
  the proof of \cite[Theorem~1.2]{Kr1999}, using that $\Ext^1(-,H)$
  vanishes on $\Ab\C$ for any cohomological functor $H$.

  In \cite[Theorem~3.5.2]{CB1994b} it is shown that each finite
  endolength object decomposes into a direct sum of indecomposable
  objects with local endomorphism rings; see
  \cite[Theorem~1.2]{Kr1999} for an alternative proof. This yields
  part (1).

  In \cite{CB1994a}, additive functions on locally finitely presented
  abelian categories are studied. In particular, there it is shown
  that every additive function on $\Ab\C$ can be written uniquely as
  a locally finite sum of irreducible additive functions. This proves
  part (2), in view of Lemma~\ref{le:res}. For an alternative proof,
  see Proposition~\ref{pr:add}.

  Finally, part (3) follows from the main theorem in \cite{CB1994a}
  which establishes the bijection between isomorphism classes of
  indecomposable finite endolength injective objects in $\Mod\C$ and
  irreducible additive functions $\Ob\Ab\C\to\bbN$.  For an
  alternative proof, see Proposition~\ref{pr:exact}, using the
  bijection between cohomological and exact functors from
  Lemma~\ref{le:Freyd}.
\end{proof}

\section{Properties of cohomological functions}

\subsection*{The correspondence between functors and functions}

The assignment $H\mapsto\chi_H$ between endofinite cohomological
functors and cohomological functions satisfies some weighted
additivity. For instance, $\chi_{H\oplus H'}=\chi_H+\chi_{H'}$
provided that $H$ and $H'$ have no common indecomposable summand, but
$\chi_{H\oplus H}=\chi_H$. We have the following concise formula.

\begin{prop}
  Let $H=\bigoplus_{i\in I} H_i$ be the decomposition of an endofinite
  cohomological functor $\C^\op\to\Ab$ into indecomposables. If
  $J\subseteq I$ is a subset such that $(H_i)_{i\in J}$ contains each
  isomorphism class from $(H_i)_{i\in I}$ exactly once then
  $\chi_H=\sum_{i\in J}\chi_{H_i}$.
\end{prop}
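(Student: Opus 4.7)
The plan is to establish, for every object $C$ of $\C$, the pointwise identity
\[\length_{\End(H)} H(C) = \sum_{j\in J} \length_{\End(H_j)} H_j(C),\]
which by definition of $\chi_H$ and $\chi_{H_j}$ gives the asserted equality of functions. I set $R = \End(H)$ and $M = H(C)$. For each $i\in I$, let $e_i\in R$ be the projection $H\twoheadrightarrow H_i\hookrightarrow H$; then $e_iRe_i = \End(H_i)$ is local by Theorem~\ref{th:2}(1), with residue division ring $D_i$, and multiplication by $e_i$ picks out $e_iM = H_i(C)$ with its natural $\End(H_i)$-structure. I write $S_i = Re_i/\rad(Re_i)$ for the simple top of the indecomposable projective $Re_i$, and note that $S_i\cong S_{i'}$ iff $H_i\cong H_{i'}$, so $(S_j)_{j\in J}$ enumerates without repetition the relevant simple $R$-modules.

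The calculation will rest on two observations. First, every composition factor of the finite-length $R$-module $M$ is isomorphic to some $S_j$ with $j\in J$: such a composition factor $S$ admits a non-zero representative $m\in M$ whose decomposition $m = \sum_i e_im$ under the abelian-group direct sum $M = \bigoplus_i H_i(C)$ has only finitely many non-zero terms, so $e_i$ acts non-trivially on $S$ for some $i$ — hence, by simplicity, as the identity — and then $S$ is a non-zero quotient of the local module $Re_i$, forcing $S\cong S_i\cong S_j$ for the unique $j\in J$ with $H_j\cong H_i$. Second, for distinct $j,j'\in J$ one has $e_{j'}S_j = 0$, while $e_jS_j = e_jRe_j/\rad(e_jRe_j) = D_j$ is a simple $\End(H_j)$-module; this is a direct computation using $\rad(Re_j) = (\rad R)e_j$ together with the idempotency of $e_j$.

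Given these, I pick a composition series $0 = M_0\subset M_1\subset\cdots\subset M_n = M$ with $M_{k+1}/M_k\cong S_{\phi(k)}$, $\phi(k)\in J$, and apply the exact endofunctor $e_j(-)$ on $R$-modules. The induced filtration of $H_j(C)$ has consecutive quotients equal to $D_j$ when $\phi(k) = j$ and zero otherwise, so $\length_{\End(H_j)} H_j(C) = |\phi^{-1}(j)|$; summing over $j\in J$ recovers $n = \length_R M$. The main obstacle is the first observation above: when $I$ is infinite, $R$ need not be semiperfect, so there is no off-the-shelf classification of its simple modules by iso classes of primitive idempotents — the element-wise argument sketched is designed precisely to circumvent this, exploiting that $M = \bigoplus_i H_i(C)$ is a direct sum only in $\Ab$ and not a priori as an $R$-module.
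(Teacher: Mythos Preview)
Your route is genuinely different from the paper's, which invokes Remark~\ref{re:sum}: there $\chi_F(X)$ is identified with the length of $X$ in the Serre quotient $\Ab\C/\Sc_F$, so the indecomposable summands of $H$ correspond to the simple objects of $\Ab\C/\Sc_H$ and the formula becomes a Jordan--H\"older count in that abelian category, bypassing $\End(H)$ altogether. Your endomorphism-ring argument is more concrete but contains a real gap.

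The problem is your second observation. The assertion $e_{j'}S_j=0$ for distinct $j,j'\in J$ (equivalently, the ``only if'' half of your earlier claim $S_i\cong S_{i'}\Leftrightarrow H_i\cong H_{i'}$) amounts to $e_{j'}Re_j=\Hom(H_j,H_{j'})\subseteq\rad R$, and this does \emph{not} follow from $\rad(Re_j)=(\rad R)e_j$ together with idempotency: for $R=M_2(k)$ with the diagonal idempotents one has $e_2Re_1\not\subseteq\rad R=0$, the point being that there $Re_1\cong Re_2$. The hypothesis $H_j\not\cong H_{j'}$ is exactly what is required and you have not invoked it. One fix: given $\phi\in e_{j'}Re_j$ with underlying map $\phi_0\colon H_j\to H_{j'}$ and any $\psi\in R$, set $\beta=\pi_j\psi\iota_{j'}\colon H_{j'}\to H_j$; then $\beta\phi_0\in\End(H_j)$ is a non-unit (otherwise $\phi_0$ is split mono, forcing $H_j\cong H_{j'}$ by indecomposability of $H_{j'}$), so $1-\beta\phi_0$ is invertible in the local ring $\End(H_j)$, and one checks directly that $1+\psi\iota_{j'}\phi_0(1-\beta\phi_0)^{-1}\pi_j$ is a two-sided inverse of $1-\psi\phi$ in $R$. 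Hence $\phi\in\rad R$, and with this in place your counting argument goes through. (A smaller slip: ``$e_i$ acts as the identity'' on the simple $S$ is not correct, since $e_iS$ need not be an $R$-submodule; but the surjection $Re_i\to S$, $re_i\mapsto re_is$ for any $s$ with $e_is\neq 0$, already yields the conclusion you want.)
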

\begin{proof}
  Adapt the proofs of Propositions~4.5 and 4.6 in
  \cite{KR2001}. Alternatively, use Remark~\ref{re:sum}.
\end{proof}

\begin{rem}
  Let $\C$ be a $k$-linear category such that each morphism set in
  $\C$ has finite length as a $k$-module. Then we have two maps
  $\Ob\C\times\Ob\C\to\bbN$, taking $(X,Y)$ either to
  $\length_{k}\Hom(X,Y)$ or to $\length_{\End(Y)}\Hom(X,Y)$. While the
  first map preserves sums in both arguments, the second one does not
  in the second argument, but it satisfies the above `weighted
  additivity'.
\end{rem}

\subsection*{Duality}
The correspondence in Theorem~\ref{th:2} yields a remarkable duality
between cohomological functors $\C^\op\to\Ab$ and cohomological
functors $\C\to\Ab$. This follows from the fact that the definition of
a cohomological function $\Ob\C\to\bbN$ is self-dual; it is an
analogue of the \emph{elementary duality} between left and right
modules over a ring studied by Herzog \cite{He1993}.

  The duality links indecomposable endofinite cohomological functors
  $H\colon\C^\op\to\Ab$ and $H'\colon\C\to\Ab$ when
  $\chi_H=\chi_{H'}$. In that case 
  \[ (\End(H)/\rad\End(H))^\op\cong \End(H')/\rad\End(H')\] where
  $\rad A$ denotes the Jacobson radical of a ring $A$. This follows
  from Remark~\ref{re:end}.

The duality specialises to Serre duality when $\C$ is a Hom-finite
$k$-linear category with $k$ a field. More precisely, if
$F\colon\C\to\C$ is a Serre functor \cite{RV2002} and $D=\Hom(-,k)$, then
\[\Hom(-,FX)\cong D\Hom(X,-)\]
for each object $X$, and therefore $\chi_{\Hom(-,FX)}=\chi_{\Hom(X,-)}$.

\subsection*{The space of cohomological functions}

Consider the set of irreducible cohomological functions $\Ob\C\to\bbN$
and identify this via Lemma~\ref{le:res} with a subspace of
$\Sp\Ab\C$, endowed with the Ziegler topology, as explained in
Proposition~\ref{pr:top}.  The quotient
\[\Sp\C=\{\chi\colon \Ob\C\to\bbN\mid \text{$\chi$
  irreducible and cohomological}\}/\Si\] with respect to the action of
the suspension is by definition the \emph{space of cohomological
  functions} on $\C$. Thus the points of $\Sp\C$ are
equivalence classes of the form $[\chi]=\{\chi\comp\Si^n\mid n\in\bbZ\}$.

The construction of this space is functorial with respect to certain
functors.  Let $f\colon\C\to\D$ be an exact functor between
triangulated categories. Given $[\chi]$ in $\Sp\D$ the composite
$\chi\comp f$ is cohomological but need not be irreducible. Thus
$f$ induces a continuous map $\Sp\D\to\Sp\C$ provided that
irreducibility is preserved. For instance a quotient functor
$\C\to\C/\B$ with respect to a triangulated subcategory
$\B\subseteq\C$ has this property; it induces a
homeomorphism \[\Sp\C/\B
\stackrel{\sim}\longrightarrow\{[\chi]\in\Sp\C\mid\chi(\B)=0\}.\]

Before we discuss specific examples, let us give one general result.
Let $k$ be a field and $\C$ be a $k$-linear triangulated category such
that for each pair of objects $X,Y$ we have $\Hom(X,\Si^nY)=0$ for
some $n\in\bbZ$.  Suppose that all morphism spaces are finite
dimensional and that $\C$ is idempotent complete. Suppose also that
$\C$ admits a Serre functor \cite{RV2002}. Denote for each object $X$
by $\chi_X$ the cohomological function corresponding to $\Hom(-,X)$.

\begin{prop}\label{pr:isolated}
  A point in $\Sp\C$ is isolated if and only if it equals $[\chi_X]$
  for some indecomposable object $X$. Moreover, the isolated points
  form a dense subset of $\Sp\C$.
\end{prop}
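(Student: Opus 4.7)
The plan is to lift the analysis to the Ziegler spectrum $\Sp\Ab\C$ of the abelianisation (via Proposition~\ref{pr:top} and Lemma~\ref{le:Freyd}) and descend to $\Sp\C$ by quotienting out the $\Si$-action. The hypotheses ensure that $\C$ is Krull--Schmidt and admits Auslander--Reiten triangles, so for each indecomposable $X\in\C$ the AR-triangle $\tau X\to Y\to X$ produces a simple coherent functor
\[S_X=\Coker\bigl(\Hom(-,Y)\lto\Hom(-,X)\bigr)\in\Ab\C,\]
supported at $X$ with value $\End(X)/\rad\End(X)$; standard Auslander-type arguments identify $X\mapsto S_X$ as a bijection between isomorphism classes of indecomposables in $\C$ and simple objects of $\Ab\C$. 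Serre duality $\Hom(-,\nu X)\cong D\Hom(X,-)$ displays each representable $\Hom(-,\nu X)$ as the $k$-dual of a corepresentable, hence as an injective object of $\Mod\C$, and a direct computation identifies it with the injective envelope $E(S_X)$.

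For the forward implication I would use the standard Ziegler fact that $O(S)=\{[E(S)]\}$ for any simple $S$ in a Grothendieck category: any nonzero $S\hookrightarrow H$ extends to $E(S)\hookrightarrow H$, which splits off by injectivity and exhausts $H$ by indecomposability. Applied to $S_{\nu^{-1}X}$ with $X$ indecomposable this yields $O(S_{\nu^{-1}X})=\{[\Hom(-,X)]\}$; the $\Si$-invariant open
\[\bigcup_{n\in\bbZ}O(S_{\Si^n\nu^{-1}X})\]
then picks out exactly the preimage $\{[\Hom(-,\Si^nX)]\mid n\in\bbZ\}$ of $[\chi_X]$ in $\Sp\Ab\C$, proving that $\{[\chi_X]\}$ is open in $\Sp\C$.

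For density, I would establish the formula
\[\Hom_{\Mod\C}\bigl(F,\Hom(-,\nu X)\bigr)\cong D\,F(X)\qquad(F\in\Ab\C,\ X\in\C)\]
by applying $\Hom(-,D\Hom(X,-))$ to a finite presentation of $F$ and invoking Yoneda together with Serre duality. For any nonzero $F\in\Ab\C$, Krull--Schmidt produces an indecomposable $X$ with $F(X)\ne 0$, so $[\Hom(-,\nu X)]\in O(F)$; this shows the representables are Ziegler-dense in $\Sp\Ab\C$, from which the density of the isolated points in $\Sp\C$ follows. The converse implication of the first statement then follows by reversing the Ziegler-topological identification: an isolated point of $\Sp\Ab\C$ in a locally coherent Grothendieck category is the injective envelope of a simple coherent object, and the AR-bijection identifies these envelopes with the representables $[\Hom(-,\nu X)]$.

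The principal obstacle is twofold. First, establishing the bijection $X\leftrightarrow S_X$ between indecomposables of $\C$ and simples of $\Ab\C$, together with the identification $E(S_X)=\Hom(-,\nu X)$ via Serre duality, requires the full AR-theoretic analysis in the triangulated setting. Second, the converse implication turns on a delicate interplay between isolated points of $\Sp\C$ (defined via the quotient topology) and isolated points of the covering space $\Sp\Ab\C$; one must show that openness of the $\Si$-orbit of $[\Hom(-,X)]$ in the subspace of irreducible cohomological functions forces the individual points to be envelopes of simples of $\Ab\C$, rather than some other kind of isolated pure-injective.
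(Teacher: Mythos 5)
Your overall strategy is close to the paper's: both use the simple coherent functor arising from the Auslander--Reiten triangle to show each $[\chi_X]$ is isolated, and both establish density of the representable points. But there are two places where the paper's argument is both cleaner and, crucially, avoids a gap you have left open.

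For the isolated-point direction, the paper takes the AR-triangle $X\to Y\to Z\to$ \emph{starting} at $X$ and reads off the \emph{copresentation} $0\to S_X\to\Hom(-,X)\to\Hom(-,Y)$, exhibiting a simple $S_X$ directly as a subobject of the indecomposable injective $\Hom(-,X)$; this gives $(S_X)=\{[\chi_X]\}$ immediately, with no need to identify $E(S_X)$ via the Serre functor. Your route through $S_X=\Coker(\Hom(-,Y)\to\Hom(-,X))$ and $E(S_{\nu^{-1}X})\cong\Hom(-,X)$ is correct in outcome but requires the extra identification $\Hom(-,\nu X)\cong D\Hom(X,-)$ and the attendant bookkeeping. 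Similarly, the density claim in the paper uses only the copresentation $0\to F\to\Hom(-,X)\to\Hom(-,Y)$: a nonzero $F$ embeds into a representable, so some indecomposable summand $X'$ of $X$ satisfies $\Hom(F,\Hom(-,X'))\neq 0$, whence $[\chi_{X'}]\in(F)$. Your Serre-duality formula $\Hom(F,\Hom(-,\nu X))\cong DF(X)$ gives the same conclusion; either works.

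The genuine gap is in the converse, precisely where you yourself flag ``the principal obstacle.'' You propose to argue that an isolated point of the Ziegler spectrum must be the injective envelope of a simple coherent object. But that converse direction of the isolated-point characterisation requires additional input: the paper's appendix (the unlabelled lemma following Proposition~\ref{pr:ZspKGdim}) only asserts it under the hypothesis that $\KGdim\Ab\C$ exists, which is not assumed in Proposition~\ref{pr:isolated}. You have actually already proved everything needed and simply not noticed: once you know that each $[\chi_X]$ is isolated and that these points are dense, the converse follows by pure topology. If $[\chi]$ is isolated then $\{[\chi]\}$ is a nonempty open set, which by density contains some $[\chi_X]$, forcing $[\chi]=[\chi_X]$. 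This is the paper's argument, and it disposes of both of your listed ``obstacles'': no bijection between indecomposables of $\C$ and simples of $\Ab\C$ is needed, and the quotient-versus-cover subtlety never arises.
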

\begin{proof}
  Each indecomposable object $X$ fits into an Auslander--Reiten
  triangle $X\to Y\to Z\to $ in $\C$, by \cite[Theorem~I.2.4]{RV2002}.
  Such a triangle provides in $\Ab\C$ the following copresentation of
  a simple object $S_X$
\[0\lto S_X\lto\Hom(-,X)\lto\Hom(-,Y).\] Thus
$(S_X)=\{\chi_X\}$ is a basic open set for each indecomposable object
$X$.

Now let $(F)$ be a non-empty basic open set with $F\in\Ab\C$. The
functor $F$ admits a copresentation
\[0\lto F\lto\Hom(-,X)\lto\Hom(-,Y)\]  with  an
indecomposable direct summand $X'\subseteq X$ such that
$\Hom(F,\Hom(-,X'))\neq 0$. Thus $\chi_{X'}$ belongs to $(F)$. It
follows that each non-empty open subset of $\Sp\C$ contains a point of
the form $[\chi_X]$.
\end{proof}

The space of cohomological functions may be empty as the following
example shows.

\begin{exm}
  Let $A$ be a ring without invariant basis number, for example the
  endomorphism ring of an infinite dimensional vector space. Then
  there is no non-zero endofinite cohomological functor
  $H\colon\D^b(\proj A)^\op\to\Ab$. To see this, observe that $H(\Si^n
  A)$ is a finite endolength $A$-module for all $n\in\bbZ$, and
  therefore the zero module \cite[\S4.7]{CB1992}.
\end{exm}

\section{Examples: perfect complexes}\label{se:per}

We compute the space $\Sp\C$ of cohomological functions in some
examples, for instance when $\C$ is the triangulated category of
perfect complexes over some ring. It is convenient to view $\Sp\C$ as
a subspace of the spectrum $\Zsp\C$, as defined in
Appendix~\ref{se:Zsp}.

The problem of computing the space of cohomological functions is
reduced to the study of the Krull--Gabriel filtration of the
abelianisation $\Ab\C$. This filtrations yields a dimension. For an
abelian category $\A$, the Krull--Gabriel dimension $\KGdim\A$ is an
invariant which measures how far $\A$ is away from being a length
category; see Appendix~\ref{se:Zsp}.

\subsection*{Modules}
Let $A$ be a ring. We write $\Mod A$ for the category of $A$-modules,
$\mod A$ for the full subcategory of finitely presented ones, and
$\proj A$ for the full subcategory of finitely generated
projectives. 

Following \cite{Gr1975}, we consider the \emph{free abelian
  category}\footnote{The category $\Ab A$ is the opposite of the
  category of functors $F\colon\mod A\to\Ab$ that admit a presentation
  $\Hom(Y,-)\to\Hom(X,-)\to F\to 0$, and $A$ (viewed as category with
  a single object) embeds via $A\mapsto\Hom(A,-)$. Any additive
  functor $A\to\A$ to an abelian category $\A$ extends uniquely to an
  exact functor $\Ab A\to\A$.}  over $A$ and denote it by $\Ab
A$. Thus the category of $A$-modules identifies with the category of
exact functors $(\Ab A)^\op\to\Ab$.

We consider the derived category $\D(\Mod A)$ and write $\per A$ for
the full subcategory $\D^b(\proj A)$ of \emph{perfect complexes}.

The \emph{Ziegler spectrum} $\Zsp A$ of $A$ is by definition the set
of isomorphism classes of indecomposable pure-injective $A$-modules
with the topology introduced by Ziegler \cite{Pr2009,Zi1984}. We
identify $\Zsp A$ with the spectrum $\Zsp\Ab A$ of the abelian
category $\Ab A$; see Appendix~\ref{se:Zsp}.

Note that the spectrum $\Zsp\per A$ identifies with a quotient  of the Ziegler
spectrum of the compactly generated triangulated category $\D(\Mod A)$
introduced in \cite{Kr2000} and further investigated in \cite{GP2005}.

Let us compare $\Zsp A$ and $\Zsp\per A$. We consider the
natural inclusion $i\colon A\to \per A$ which extends to an exact
functor $i^*\colon\Ab A\to\Ab\per A$ by the universal property of $\Ab A$.

\begin{lem}\label{le:ZgA}
  Let $\Sc_0\subseteq\Ab\per A$ be the Serre subcategory
  generated by the representable functors $\Hom(-,\Si^n A)$ with
  $n\neq 0$. Then the composite 
  \[\Ab A\xto{i^*}\Ab\per A\twoheadrightarrow (\Ab\per A)/\Sc_0\]
is an equivalence.
\end{lem}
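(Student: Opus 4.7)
The plan is to prove the equivalence by comparing the universal properties corepresented by the two sides. Exactness of the composite $\Phi$ is immediate: the composite $A\to \per A\hookrightarrow \Ab\per A$ (via Yoneda) is additive, so by the universal property of the free abelian category $\Ab A$ it extends uniquely to an exact functor $i^* \colon \Ab A\to\Ab\per A$, and post-composition with the Serre quotient yields the exact functor $\Phi$. I will establish that $\Phi$ is an equivalence by verifying that both $\Ab A$ and $(\Ab\per A)/\Sc_0$ corepresent the same $2$-functor on abelian categories, namely the assignment $\mathcal{B}\mapsto\{\text{additive functors } A\to\mathcal{B}\}$.

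The universal property of $\Ab A$ yields the left-hand side directly. For the right-hand side I combine two principles: by the universal property of the Serre quotient, exact functors $(\Ab\per A)/\Sc_0\to\mathcal{B}$ correspond to exact functors $\Ab\per A\to\mathcal{B}$ vanishing on the generators of $\Sc_0$, i.e.\ on $\Hom(-,\Si^n A)$ for all $n\neq 0$; by (the covariant variant of) Lemma~\ref{le:Freyd}, these in turn correspond to cohomological functors $H\colon\per A\to\mathcal{B}$ with $H(\Si^n A)=0$ for $n\neq 0$. It therefore suffices to show that restriction along the inclusion $A\hookrightarrow \per A$ yields a bijection between such $H$ and additive functors $F\colon A\to\mathcal{B}$.

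Restriction is evidently well-defined. For its inverse, given additive $F$, I construct $H$ by the formula $H(X)=H^0(F(X^\bullet))$, where $F(X^\bullet)$ denotes the cochain complex in $\mathcal{B}$ whose $i$-th term $F(X^i)$ is well-defined on direct summands of $A^{r_i}$ by the additivity of $F$, with differentials induced from $X^\bullet$. This $H$ is cohomological because an exact triangle $X\to Y\to Z\to$ in $\per A$ realises $Z^\bullet$ as the cone of $X^\bullet\to Y^\bullet$ at the level of cochain complexes, so $F(Z^\bullet)$ is the cone of $F(X^\bullet)\to F(Y^\bullet)$ in $\mathcal{B}$, yielding the required long exact cohomology sequence; the vanishing $H(\Si^n A)=0$ for $n\neq 0$ is immediate. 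The main obstacle will be showing that extension after restriction recovers the original cohomological functor: this I handle by a devissage along stupid truncation triangles $\sigma^{\geq n+1}X\to\sigma^{\geq n}X\to \Si^{-n}X^n\to$, where the vanishing of $H$ on shifts of $A$ (away from degree zero), combined with cohomologicality, forces inductive agreement on the length of $X$. Specialising the resulting equivalence of universal properties to $\mathcal{B}=\Ab A$ (applied to the tautological additive embedding $A\hookrightarrow\Ab A$) produces the quasi-inverse $\Psi$ to $\Phi$, and uniqueness in the universal property ensures $\Psi\comp\Phi\simeq\id$ and $\Phi\comp\Psi\simeq\id$.
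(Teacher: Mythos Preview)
Your proof is correct and follows the same underlying idea as the paper: both arguments identify the two categories by comparing the exact (equivalently, cohomological) functors out of them, using Freyd's universal property (Lemma~\ref{le:Freyd}) for $\Ab\per A$ and the defining universal property for $\Ab A$. The paper's version is terser: it works contravariantly with target $\Ab$ only, observing that cohomological functors $(\per A)^\op\to\Ab$ annihilating the $\Si^n A$ are exactly $A$-modules via $H\mapsto H(A)$, and then concludes. Your version works covariantly with an arbitrary abelian target $\mathcal{B}$, spells out the inverse $F\mapsto H^0(F(-^\bullet))$ together with the d\'evissage along stupid truncations, and finishes by a clean 2-Yoneda argument. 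What your variant buys is self-containment: you never need the background fact that an essentially small abelian category is determined by its category of exact functors to $\Ab$, which the paper's ``from this the assertion follows'' tacitly invokes. Conversely, the paper's specialization to $\Ab$ makes the identification with $\Mod A$ immediate and avoids having to check that idempotents split in $\mathcal{B}$ when extending $F$ from $A$ to $\proj A$ (which you use implicitly and which holds because $\mathcal{B}$ is abelian).
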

\begin{proof}
  The cohomological functors $H\colon(\per A)^\op\to\Ab$ annihilating
  $\Si^n A$ for all $n\neq 0$ identify with $\Mod A$, by taking $H$
  to $H(A)$. Thus the exact functors $(\Ab\per A)^\op\to\Ab$
  annihilating $\Sc_0$ identify with $\Mod A$, by
  Lemma~\ref{le:Freyd}. From this the assertion follows.
\end{proof}

We view an $A$-module $X$ as a complex concentrated in degree zero and
denote by $H_X$ the corresponding cohomological functor
$\Hom(-,X)\colon (\per A)^\op\to\Ab$. It is convenient to identify
$H_X$ with the exact functor \[\Hom(-,H_X)\colon (\Ab\per
A)^\op\lto\Ab.\]

\begin{prop}\label{pr:ZspA}
The assignment $X\mapsto [H_X]$ induces an injective and continuous map
$\p\colon \Zsp A\to\Zsp\per A$; its image is a closed subset.
\end{prop}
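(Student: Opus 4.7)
My plan is to bootstrap from Lemma~\ref{le:ZgA} using the standard behaviour of Ziegler spectra under Serre localisation, as developed in Appendix~\ref{se:Zsp}. The general fact I would invoke is: for any Serre subcategory $\Sc$ of an abelian category $\A$, the exact quotient functor $q\colon\A\to\A/\Sc$ induces a closed embedding $\Zsp(\A/\Sc)\hookrightarrow\Zsp\A$, sending a point to its composition with $q$, with image equal to $\{[E]\in\Zsp\A\mid E|_\Sc=0\}$. Applied to $\Sc_0\subseteq\Ab\per A$, this yields a continuous, injective, closed map $\Zsp((\Ab\per A)/\Sc_0)\to\Zsp\Ab\per A$, whose image consists of the points of $\Zsp\per A=\Zsp\Ab\per A$ that send $\Si^n A$ to zero for every $n\neq 0$.

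Combining this with the equivalence $\Ab A\simeq(\Ab\per A)/\Sc_0$ from Lemma~\ref{le:ZgA} and the identification $\Zsp A=\Zsp\Ab A$, I would obtain a continuous, injective, closed map $\Zsp A\hookrightarrow\Zsp\per A$. It then remains to check that this abstract map agrees with $X\mapsto[H_X]$. Via Lemma~\ref{le:Freyd}, a point of $\Zsp\per A$ corresponds to a cohomological functor $(\per A)^\op\to\Ab$. An indecomposable pure-injective $A$-module $X$, viewed as an object of $\Zsp A=\Zsp\Ab A$, is transported under Lemma~\ref{le:ZgA} to the cohomological functor characterised by vanishing on $\Si^n A$ for $n\neq 0$ and taking the value $X$ at $A$; and this is precisely $H_X=\Hom(-,X)|_{\per A}$, since $H_X(\Si^n A)=\Hom_{\D(\Mod A)}(\Si^n A,X)=0$ for $n\neq 0$.

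The main obstacle is not topological — continuity, injectivity, and closedness all drop out of the Serre-quotient formalism once the identifications are in place — but rather the bookkeeping needed to reconcile the abstract map coming from the equivalence of Lemma~\ref{le:ZgA} with the concrete assignment $X\mapsto[H_X]$. Once this matching is verified, injectivity of $\p$ is inherited from the injectivity of the Serre-quotient embedding, and closedness of the image follows because the image is precisely the vanishing locus of the objects $\Hom(-,\Si^n A)$ with $n\neq 0$, which is a closed subset of $\Zsp\per A$ in the Ziegler topology.
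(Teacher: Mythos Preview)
Your approach via the Serre quotient of Lemma~\ref{le:ZgA} is exactly the paper's starting point, but there is a genuine gap: you have identified $\Zsp\per A$ with $\Zsp\Ab\per A$, whereas by definition (Appendix~\ref{se:Zsp}) $\Zsp\per A$ is the \emph{orbit space} $(\Zsp\Ab\per A)/\Si$. Your argument correctly produces a closed embedding $\Zsp A\cong\U\hookrightarrow\Zsp\Ab\per A$ onto the vanishing locus of $\Sc_0$, but you still have to descend through the quotient map $\Zsp\Ab\per A\to\Zsp\per A$.

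Two things then require proof. For injectivity of $\p$ one must check that distinct points of $\U$ stay distinct modulo $\Si$, i.e.\ that the translates $\Si^n\U$ are pairwise disjoint; this is easy (any $F\in\U$ has $F(A)\neq 0$ and $F(\Si^nA)=0$ for $n\neq 0$), but it is not automatic from the Serre-quotient formalism. For closedness, the image of $\p$ in the orbit space corresponds to the $\Si$-saturation $\bigcup_{n\in\bbZ}\Si^n\U$, and an infinite union of closed sets need not be closed. Your proposed description of the image as the vanishing locus of the $\Hom(-,\Si^nA)$ with $n\neq 0$ does not survive the passage to orbits: the $\Si$-invariant Serre subcategory generated by these objects also contains $\Hom(-,A)$ and hence is all of $\Ab\per A$, so its vanishing locus is empty. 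The paper circumvents this by instead showing that $\bigcup_n\Si^n\U$ is the \emph{complement} of the open set $\bigcup_{r\neq s}\{F\mid F(\Si^rA)\neq 0\neq F(\Si^sA)\}$, which is the step your argument is missing.
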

\begin{proof}
  We apply Lemma~\ref{le:ZgA}. The composite \[f\colon\Ab\per
  A\twoheadrightarrow(\Ab\per A)/\Sc_0\xto{\sim}\Ab A\] identifies
  $\Zsp A$ with a closed subset of $\Zsp\Ab\per A$ which we denote by
  $\U$. Viewing an $A$-module $X$ as an exact functor
  $(\Ab A)^\op\to\Ab$, we have $X\comp f=H_X$.  Note that the subsets
  $\Si^n \U$, $n\in\bbZ$, are pairwise disjoint. It follows that $\p$
  is injective.

  Next observe that
  \[\bigcup_{n\in\bbZ}\Si^n\U=(\Zsp\Ab\per A)\setminus\bigcup_{(r,s)}\U_{r,s}\]
where $(r,s)$ runs through all pairs of integers $r\neq s$ and 
\[\U_{r,s}=\{F\in\Zsp\Ab\per A\mid F(\Si^rA)\neq 0\neq F(\Si^s A)\}.\]
Thus the image of $\p$ is closed.

If a closed subset $\V\subseteq \Zsp\per A$ consists of all exact
functors vanishing on the Serre subcategory $\Sc\subseteq\Ab\per A$,
then $\p^{-1}(\V)$ consists of all exact functors vanishing on
$f(\Sc)$. Thus $\p$ is continuous.
\end{proof}

\subsection*{Hereditary rings}
A complex of $A$-modules can be written as a direct sum of stalk
complexes when $A$ is hereditary. This has some useful consequences
which are collected in the following proposition.  

\begin{prop}\label{pr:hered}
For a hereditary ring $A$  the following holds.
\begin{enumerate}
\item The map $\phi\colon\Zsp A\to\Zsp\per A$ taking $X$ to $[H_X]$ is a
homeomorphism.
\item $\KGdim\Ab\per A=\KGdim\Ab A$.
\end{enumerate}
\end{prop}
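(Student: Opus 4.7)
My starting point is Proposition~\ref{pr:ZspA}, which gives us that $\phi$ is injective, continuous, and has closed image; what remains for Part~(1) is (i) surjectivity (suitably interpreted modulo the shift action) and (ii) continuity of $\phi^{-1}$ on its image. The hereditary hypothesis enters decisively through the fact that every object $Y\in\D(\Mod A)$ is quasi-isomorphic to the split complex $\bigoplus_{n\in\bbZ}\Si^{-n}H^n(Y)$. Combining this with Remark~\ref{re:representable}, an indecomposable endofinite cohomological functor $H\colon(\per A)^\op\to\Ab$ is represented by some $Y\in\D(\Mod A)$, and the decomposition together with indecomposability of $H$ forces $H^n(Y)$ to be nonzero in exactly one degree $n_0$. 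Then $X:=H^{n_0}(Y)$ is an indecomposable pure-injective $A$-module and $H$ is a shift of $H_X$. In the notation of the proof of Proposition~\ref{pr:ZspA}, this is the statement that $\U_{r,s}=\emptyset$ whenever $r\neq s$, so $\Zsp\Ab\per A=\bigsqcup_{n\in\bbZ}\Si^n\U$, which gives the desired surjectivity.

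For (ii), $\phi$ is the map on Ziegler spectra induced by the Serre quotient $\Ab\per A\twoheadrightarrow\Ab A$ of Lemma~\ref{le:ZgA}. A basic open set $(F)\subseteq\Zsp A$ with $F\in\Ab A$ corresponds under this quotient to a basic open set of the lifted object $\tilde F\in\Ab\per A$ intersected with the image of $\phi$; this exhibits $\phi$ as a homeomorphism onto its closed image, using only the general compatibility of Serre quotients with the Ziegler topology established in the appendix.

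For Part~(2), the inequality $\KGdim\Ab A\le\KGdim\Ab\per A$ is immediate from Lemma~\ref{le:ZgA} and the fact that Krull--Gabriel dimension does not increase under Serre quotients. For the reverse, the plan is to exploit the hereditary decomposition at the level of copresentations: every finitely presented functor $F\in\Ab\per A$ sits in a copresentation $0\to F\to\Hom(-,X)\to\Hom(-,Y)$ in which $X=\bigoplus_i\Si^{-i}X_i$ and $Y=\bigoplus_j\Si^{-j}Y_j$ split by cohomology degree. Since $\Ext^{\ge 2}_A=0$, the only nonzero blocks of the morphism $X\to Y$ occur in the diagonal positions $j=i$ (contributing $\Hom_A(X_i,Y_i)$) and the subdiagonal positions $j=i+1$ (contributing $\Ext^1_A(X_i,Y_{i+1})$). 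This block-bidiagonal structure exhibits $F$ as built by finitely many extensions from functors in $\Ab A$ placed in various shift degrees, and an induction on the Krull--Gabriel filtration then yields $\KGdim\Ab\per A\le\KGdim\Ab A$.

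The main obstacle I expect is this reverse inequality in Part~(2): although the hereditary decomposition of perfect complexes looks clean, $\Ab\per A$ is genuinely not a direct sum or product of shifted copies of $\Ab A$, because the $\Ext^1$ contributions create nontrivial extensions between adjacent shift degrees that must be carried faithfully through each stage of the Krull--Gabriel filtration. As a cross-check, one could alternatively compute Krull--Gabriel dimension as the Cantor--Bendixson rank of the Ziegler spectrum and invoke the set-theoretic decomposition $\Zsp\Ab\per A=\bigsqcup_n\Si^n\U$ from Part~(1), each piece being homeomorphic to $\Zsp A$, to reduce directly to $\KGdim\Ab A$.
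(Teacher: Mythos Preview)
Your argument for Part~(1) follows the paper's approach closely. One slip: you invoke Remark~\ref{re:representable} for ``an indecomposable endofinite cohomological functor'', but the points of $\Zsp\per A$ are \emph{all} indecomposable injective cohomological functors, not only the endofinite ones. Representability still holds (the footnote to Remark~\ref{re:representable} shows that injectivity of $H$ is what makes Brown representability apply), so the fix is terminological. The paper packages the same idea by observing that the hereditary splitting gives $\Zsp\Ab\per A=\bigsqcup_{n}\U_n$ with $\U_n=(\Hom(-,\Si^nA))$ a basic \emph{open} set, and this openness is what makes the passage to the orbit space work: an open $\V\subseteq\Zsp A$ identifies with an open $\V_n\subseteq\U_n$, hence is open in $\Zsp\Ab\per A$, hence in $\Zsp\per A$. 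Your step~(ii) as written only shows that $\phi$ is a homeomorphism onto the closed subset $\U\subseteq\Zsp\Ab\per A$ via the Serre quotient; to descend to the orbit space you still need $\U$ to be open, which is precisely what your step~(i) provides.

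For Part~(2) the paper's proof is a one-line application of Lemma~\ref{le:KGdim}: the family of quotient functors $(\Ab\per A\twoheadrightarrow(\Ab\per A)/\Sc_n\xto{\sim}\Ab A)_{n\in\bbZ}$ from Lemma~\ref{le:ZgA} satisfies the hypotheses because each $\U_n$ is open (again by the hereditary decomposition from Part~(1)), so $\KGdim\Ab\per A=\sup_n\KGdim\Ab A=\KGdim\Ab A$. Your primary block-bidiagonal argument is considerably harder, and the obstacle you flag is real: the $\Ext^1$-extensions between adjacent shift degrees do not obviously respect the Krull--Gabriel filtration, and turning that sketch into a proof would require substantial extra work. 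Your cross-check via Cantor--Bendixson rank is morally the paper's argument, but as stated it presupposes that $\KGdim$ exists (cf.\ Proposition~\ref{pr:ZspKGdim}); Lemma~\ref{le:KGdim} works directly at the level of the filtration and handles both existence and value uniformly.
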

\begin{proof}
(1)  We apply Proposition~\ref{pr:ZspA}.  Each indecomposable complex of
  $A$-modules is concentrated in a single degree since $A$ is
  hereditary. This observation yields a disjoint union
\[\Zsp\Ab\per A=\bigcup_{n\in\bbZ}\U_n\qquad\text{with}
\qquad\U_n=\{F\in \Zsp\Ab\per A\mid F(\Si^n A)\neq 0\},\] and each
$\U_n$ is homeomorphic to $\Zsp A$. An open subset $\V\subseteq \Zsp A$
identifies with an open subset $\V_n\subseteq \U_n$ and therefore with
an open subset of $\Zsp\per A$ via $\p$. It follows that $\p$ is open
and therefore a homeomorphism by Proposition~\ref{pr:ZspA}.

(2) We apply Lemma~\ref{le:KGdim} and use the family of quotient
functors \[(\Ab\per A\twoheadrightarrow (\Ab\per A)/\Sc_n\xto{\sim}\Ab
A)_{n\in\bbZ}\] from Lemma~\ref{le:ZgA}.
\end{proof}

\subsection*{Commutative rings}

Let $A$ be a commutative ring and $\Spec A$ the set of prime ideals.
We endow $\Spec A$ with the dual of the Zariski topology in the sense
of Hochster \cite{Ho1969}.  A prime ideal $\frp$ with residue field
$k(\frp)$ yields an irreducible cohomological function
\[\chi_{k(\frp)}\colon\Ob\per A\lto\bbN,\quad
X\mapsto\length_{k(\frp)}\Hom(X,k(\frp)).\]
As before, we identify $\chi_{k(\frp)}$ with $k(\frp)$. In particular,
$\chi_{k(\frp)}$ is irreducible since  $k(\frp)$ is indecomposable.

\begin{thm}
  The map $\rho\colon\Spec A\to\Sp\per A$ sending $\frp$
  to $[\chi_{k(\frp)}]$ is injective and closed with respect to the
  Hochster dual of the Zariski topology on $\Spec A$.
\end{thm}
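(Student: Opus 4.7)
My plan is to prove injectivity by direct evaluation on the generator $A\in\per A$, and closedness in two stages: first reducing via Koszul complexes to showing $\rho(\Spec A)$ is itself closed, then establishing this by a closed characterisation of residue fields inside the Ziegler spectrum.

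For injectivity, if $\rho(\frp)=\rho(\frq)$ then $\chi_{k(\frp)}=\chi_{k(\frq)}\comp\Si^n$ for some $n\in\bbZ$. Evaluation on $A$ gives $\chi_{k(\frp)}(A)=1$ on the left and $\length_{k(\frq)}\Ext^{-n}(A,k(\frq))$ on the right, which is $1$ if $n=0$ and $0$ otherwise. Hence $n=0$, Theorem~\ref{th:2}(3) yields $H_{k(\frp)}\cong H_{k(\frq)}$, and Proposition~\ref{pr:ZspA} lifts this to $k(\frp)\cong k(\frq)$; taking annihilators in $A$ gives $\frp=\frq$.

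For closedness, the Hochster-dual closed subsets of $\Spec A$ are precisely the intersections $C=\bigcap_\alpha D(I_\alpha)$ with each $I_\alpha$ finitely generated and $D(I)=\{\frp:I\not\subseteq\frp\}$. Given $I=(f_1,\ldots,f_n)$, the Koszul complex $K(I)\in\per A$ satisfies $\RHom_A(K(I),k(\frp))\cong K(I)^\vee\otimes^{\bfL}_A k(\frp)$, which is acyclic iff some $f_i$ is invertible in $k(\frp)$, iff $I\not\subseteq\frp$. Hence the $\Si$-invariant subset
\[
Z_I \;:=\; \bigcap_{m\in\bbZ}\{[\chi]\in\Sp\per A:\chi(\Si^m K(I))=0\}
\]
is closed in $\Sp\per A$ (as the intersection of complements of basic Ziegler-opens in $\Zsp\per A$, passed to the $\Si$-quotient) and satisfies $\rho^{-1}(Z_I)=D(I)$. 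Injectivity of $\rho$ then gives the identity $\rho(C)=\rho(\Spec A)\cap\bigcap_\alpha Z_{I_\alpha}$, reducing the problem to showing $\rho(\Spec A)$ is closed in $\Sp\per A$.

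The main obstacle is this last reduction. My approach is to identify residue fields with the indecomposable pure-injective $A$-modules of endolength one inside $\Zsp A$: for such an $M$, a standard argument using the centre of $\End_A(M)$ and indecomposability shows $K=\End_A(M)$ is a field, the inclusion $\End_K(M)\subseteq\End_A(M)=K$ then forces $\dim_K M=1$, and indecomposability of $M$ as an $A$-module rules out proper extensions of $k(\ker(A\to K))$, giving $M\cong k(\frp)$. The condition ``endolength $\geq 2$'' is open in $\Zsp A$ (witnessed by a pp-formula $\varphi$ with both $\varphi(M)\neq 0$ and $M/\varphi(M)\neq 0$), so residue fields form a closed subset of $\Zsp A$; via Proposition~\ref{pr:ZspA} this is embedded as a closed subset of $\Zsp\per A$. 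Its $\Si$-saturation, namely the shifts of residue fields, is cut out of $\Zsp\per A$ by the $\Si$-invariant closed condition that $H(\Si^m A)$ vanish for all but one $m$ and that the resulting $A$-module (via Lemma~\ref{le:ZgA}) have endolength one; quotienting by $\Si$ then yields $\rho(\Spec A)$ as a closed subset. A simpler condition like ``$\End(H)$ is a field'' is insufficient, as the perfect complex $\cone(k[\e]/\e^2\xto{\e}k[\e]/\e^2)$ has endomorphism ring $k$ yet cohomology in two degrees, so the single-degree restriction is essential.
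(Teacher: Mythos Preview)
Your argument is correct and tracks the paper's proof closely: both show $\Im\rho$ is cut out by the two closed conditions ``concentrated in a single cohomological degree'' (Proposition~\ref{pr:ZspA}) and ``endolength at most one on $A$'' (the paper via Lemma~\ref{le:finite}, you via the equivalent pp-formula witness), combined with the identification of indecomposable endolength-one $A$-modules with residue fields (the paper cites \cite[\S4.7]{CB1992}; you supply the argument directly, which is a nice addition). The one substantive variation is your treatment of an arbitrary Hochster-closed set: the paper invokes Thomason's classification of thick subcategories of $\per A$, whereas you bypass this by using Koszul complexes and the elementary computation $\RHom_A(K(I),k(\frp))=0\Leftrightarrow I\not\subseteq\frp$. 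Your route is more self-contained; the two are of course equivalent, since Thomason's thick subcategory corresponding to $\V^c=\bigcup_\alpha V(I_\alpha)$ is precisely the one generated by the $K(I_\alpha)$.

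One small correction to your closing remark: the endomorphism ring of $\cone(k[\e]\xto{\e}k[\e])$ in $\per k[\e]$ is $k[\e]$, not $k$ (a direct chain-map computation gives a two-dimensional ring; this is the object $X_{n,1}$ in the proof of Proposition~\ref{pr:dim1}). So this particular example does not illustrate the necessity of the single-degree condition, though the point itself remains valid and is not needed for the proof.
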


\begin{proof}
  The injectivity is clear since different primes $\frp,\frq$ yield
  non-isomorphic functors $\Hom(-,k(\frp))$ and $\Hom(-,k(\frq))$.

To show that the image  $\Im\rho$ is closed, observe first that 
\[\U=\{[\chi]\in\Sp\per A\mid \chi(\Si^n A)\neq 0\text{ for
  at most one }n\in\bbZ\}\] is closed by Proposition~\ref{pr:ZspA}.
Also, \[\U_1=\{[\chi]\in\Sp\per A\mid \chi(\Si^n A)\le 1\text{ for all
}n\in\bbZ\}\] is closed by Lemma~\ref{le:finite}.  The indecomposable
$A$-modules $X$ with $\length_{\End(X)}X\le 1$ are precisely the
residue fields $k(\frp)$; see \cite[\S4.7]{CB1992}.  Thus
$\Im\rho=\U\cap \U_1$ is closed.

Given a closed subset $\V\subseteq\Spec A$, we need to show that
$\rho(\V)$ is closed.  It follows from Thomason's classification of
thick subcategories \cite[Theorem~3.15]{Th1997} that there is a thick
subcategory $\C$ of $\per A$ with $\frp\in\V$ iff $\Hom(X,k(\frp))=0$
for all $X\in\C$. The latter condition means $\chi_{k(\frp)}(X)=0$ for
all $X\in\C$.  Thus $\{[\chi_{k(\frp)}]\mid \frp\in\V\}$ is Ziegler
closed.
\end{proof}

\begin{rem}
This result generalises to schemes that are quasi-compact and quasi-separated.
\end{rem}

\subsection*{Krull--Gabriel dimension zero}

Following \cite{Kr2012}, a triangulated category $\C$ is \emph{locally
  finite} if its abelianisation $\Ab\C$ is a length category, which
means that $\KGdim\Ab\C\le 0$. Set $\chi_X=\chi_{\Hom(-,X)}$ for
$X\in\C$ when $\Hom(-,X)$ is endofinite.

\begin{prop}\label{pr:KGdim=0}
  Let $\C$ be a locally finite and idempotent complete triangulated
  category. Suppose  for each pair of objects $X,Y$ that
  $\Hom(X,\Si^nY)=0$ for some $n\in\bbZ$. Then
\[\Sp\C=\{[\chi_X]\mid X\in\C\text{ indecomposable}\}.\]
\end{prop}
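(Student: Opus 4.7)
The strategy is to apply Theorem~\ref{th:2}(3), which gives a bijection between irreducible cohomological functions on $\C$ and isomorphism classes of indecomposable endofinite cohomological functors $\C^\op\to\Ab$. The claim thus reduces to showing that these functors are, up to isomorphism, precisely the representables $\Hom(-,X)$ with $X\in\C$ indecomposable.

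For the inclusion $\{[\chi_X]\mid X\text{ indec}\}\subseteq\Sp\C$, I would fix $X$ indecomposable. Since $\C$ is locally finite and idempotent complete, it is Krull--Schmidt, so $\End(\Hom(-,X))=\End(X)$ is local. The functor $\Hom(-,X)$ is cohomological by the standard long exact sequence, and condition (2) of the definition of endofinite is precisely the standing vanishing hypothesis on $\C$. For condition (1), I would use that $\Hom(-,X)$ has finite length in the length category $\Ab\C$; its simple composition factors $S_Y$ take only division-ring values pointwise, so $\Hom(C,X)$ is a finite iterated extension of such values, and a bookkeeping of the (natural) $\End(X)$-action yields finite $\End(X)$-length.

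For the converse, let $H$ be an indecomposable endofinite cohomological functor. By the proof of Theorem~\ref{th:2}, $H$ is an indecomposable injective object of $\Mod\C$ with local endomorphism ring. Since $\Ab\C$ is a length (hence noetherian) category, $\Mod\C$ is locally noetherian, and Matlis' theorem gives $H\cong E(S)$ for a unique simple subobject $S\subseteq H$; moreover $S$ is finitely presented, hence lies in $\Ab\C$. The Krull--Schmidt-ness of $\C$ forces every simple of $\Mod\C$ to have single-point support on some indecomposable $X\in\C$, with value $\End(X)/\rad\End(X)$ (split off non-support summands via local endomorphism rings); denote this simple by $S_X$. Thus $H\cong E(S_X)$ for some indecomposable $X$.

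Finally, I would identify $E(S_X)$ with $\Hom(-,X)$. The representable $\Hom(-,X)$ is cohomological (hence injective in $\Mod\C$), is indecomposable (local endomorphism ring $\End(X)$), and admits a nonzero embedding $S_X\hookrightarrow\Hom(-,X)$: by direct computation, a natural transformation $S_X\to\Hom(-,X)$ corresponds to a right-annihilator element of $\rad\End(X)$ inside $\End(X)$, which is nonzero because $\End(X)$ is artinian local under the finite-length hypothesis on $\Hom(-,X)\in\Ab\C$. Since $\Hom(-,X)$ is an indecomposable injective containing the simple $S_X$, it coincides with $E(S_X)$, whence $H\cong\Hom(-,X)$ and $\chi_H=\chi_X$. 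The main obstacle is this last identification $E(S_X)\cong\Hom(-,X)$: verifying the socle embedding and the artinian-ness of $\End(X)$ rests on the Auslander--Reiten flavour of the locally finite hypothesis together with idempotent completeness, and it is here that the hypothesis is essential.
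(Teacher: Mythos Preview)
Your strategy matches the paper's: reduce via Theorem~\ref{th:2}(3) to showing that the indecomposable endofinite cohomological functors are exactly the $\Hom(-,X)$ with $X$ indecomposable. The argument for the converse direction, however, contains a genuine error in the identification $E(S_X)\cong\Hom(-,X)$.

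A natural transformation $S_X\to\Hom(-,X)$ corresponds, via Yoneda, to an element $\phi\in\End(X)$ satisfying $\phi\circ\rad(C,X)=0$ for \emph{every} object $C$, not merely for $C=X$. Nilpotence of $\rad\End(X)$ yields only the latter, and the two conditions genuinely differ. Indeed, since $\Hom(-,X)$ is an indecomposable injective of finite length in $\Ab\C$, its socle is simple; a nonzero map $S_X\to\Hom(-,X)$ exists precisely when that socle equals $S_X$, i.e.\ when the top and the socle of $\Hom(-,X)$ coincide. In the Frobenius length category $\Ab\C$ this amounts to triviality of the Nakayama permutation on indecomposable projective--injectives, which fails already for $\D^b(\rep(A_2,k))$. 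Concretely, the Auslander--Reiten triangle starting at $X$ exhibits $\soc\Hom(-,X)$ as the simple supported at a translate of $X$ distinct from $X$.

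The repair is immediate and is what the paper does: drop the attempt to match indices. Every object of $\Ab\C$ embeds in a representable (by its defining copresentation), so the injective envelope of a simple $S$ is a direct summand of some $\Hom(-,C)$, hence by idempotent completeness equals $\Hom(-,X')$ for some indecomposable $X'$. Thus $H\cong E(S)\cong\Hom(-,X')$---you land on a representable, just not the one you predicted.

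A smaller issue concerns the forward inclusion. Evaluating a composition series of $\Hom(-,X)$ in $\Ab\C$ at $C$ yields a filtration of $\Hom(C,X)$ by abelian subgroups, but this filtration need not be stable under the $\End(X)$-action (endomorphisms of $\Hom(-,X)$ do not preserve arbitrary subfunctors), so the promised ``bookkeeping'' is not available. The paper instead invokes the bound $\length_{\End(X)}\Hom(C,X)\le\length_{\Ab\C}\Hom(-,C)$ from Auslander (equivalently Remark~\ref{re:sum}), which rests on a filtration of $\Hom(-,C)$ rather than of $\Hom(-,X)$.
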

\begin{proof}
Let $X$ be an  object in $\C$. Then we have for each object $C$ 
\[\length_{\End(X)}\Hom(C,X)\le \length_{\Ab\C}\Hom(-,C)<\infty\]
by \cite[Theorem~2.12]{Au1974}; see also
Remark~\ref{re:sum}. Thus $\Hom(-,X)$ is endofinite.

Let $\chi\colon\Ob\C\to\bbN$ be an irreducible cohomological
function and $\hat\chi\colon\Ob\Ab\C\to\bbN$ its extension to
$\Ab\C$. Then $\hat\chi(S)\neq 0$ for some simple object $S$. There is
an indecomposable object $X$ in $\C$ with $\Hom(S,\Hom(-,X))\neq 0$,
and it follows that $\Hom(-,X)$ is an injective envelope. Thus $\chi=\chi_X$.
\end{proof}

\begin{exm}
  Let $k$ be a field and $\Ga$ a quiver with underlying diagram of
  Dynkin type.  Denote by $\rep (\Ga,k)$ the category of finite
  dimensional $k$-linear representations of $\Ga$. The indecomposable
  endofinite cohomological functors $\D^b(\rep (\Ga,k))^\op\to\Ab$ are
  precisely (up to isomorphism) the representable functors
  $\Hom(-,\Si^nX)$ with $X$ an indecomposable object in $\rep (\Ga,k)$
  (viewed as complex concentrated in degree zero) and $n\in\bbZ$.
\end{exm}

Examples of triangulated categories $\C$ with $\KGdim\Ab\C=1$ show
that the description of $\Sp\C$ in Proposition~\ref{pr:KGdim=0} does
not hold more generally.

\subsection*{Krull--Gabriel dimension one}

Let $k$ be a field. We give an example of a finite dimensional
$k$-algebra $A$ such that the Krull--Gabriel dimension of $\Ab\per A$
equals $1$. This gives some evidence for the following conjecture.

\begin{conj}
  Let $A$ be a finite dimensional $k$-algebra. Then the
  Krull--Gabriel dimension of $\Ab\per A$ equals $0$ or $1$ if and
  only if $\per A$ is a discrete derived category in the sense of
  \cite{Vo2001}.
\end{conj}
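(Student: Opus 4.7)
My plan attacks the two directions separately, appealing to the paradigm displayed in the introduction and to the classification results available for discrete derived categories.

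For the implication ``$\per A$ discrete $\Rightarrow$ $\KGdim\Ab\per A\le 1$'', I would first invoke Vossieck's classification (and its subsequent refinements for finite dimensional algebras), which gives an explicit list of derived equivalence classes: either $\per A$ is piecewise hereditary of Dynkin type, in which case Proposition~\ref{pr:KGdim=0} immediately yields $\KGdim\Ab\per A=0$, or $A$ is derived equivalent to one of a short list of gentle-type algebras, of which $k[\e]$ is the prototype. Before case-checking I would verify that $\KGdim\Ab\per A$ is invariant under derived equivalence, which is clean because a triangle equivalence $\per A\xto{\sim}\per B$ induces an equivalence of abelianisations $\Ab\per A\xto{\sim}\Ab\per B$. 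For each derived equivalence class on Vossieck's list I would then imitate the analysis that the paper carries out for $k[\e]$: list all indecomposable perfect complexes (for gentle algebras they are given by strings and bands, but in the discrete case only strings occur), identify the simples of $\Ab\per A$ with Auslander--Reiten triangles via the recipe of the proof of Proposition~\ref{pr:isolated}, and show that after quotienting $\Ab\per A$ by the Serre subcategory generated by these simples one obtains a semisimple length category whose simple objects correspond to the shifts of a single ``limit'' functor (the analogue of $\chi_k$ in the $k[\e]$ example).

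For the converse, I would argue the contrapositive. Suppose $\per A$ is not discrete, so there is a family of pairwise non-isomorphic indecomposable perfect complexes $X_\la$ parametrised by the $k$-points of an irreducible curve. Using Lemma~\ref{le:Freyd} together with Crawley-Boevey's construction of generic modules, I would produce a cohomological functor $H\colon (\per A)^\op\to\Ab$ with $\End(H)/\rad\End(H)$ a field extension of transcendence degree one, and such that $\chi_H(X_\la)$ is constant on the family. The point $[H]\in\Sp\per A$ would then sit in every basic open set of the form $(S_{X_\la})$, so it is not isolated, and moreover remains non-isolated after removing all $[\chi_{X_\la}]$ from $\Zsp\Ab\per A$. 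This survives into the second layer of the Krull--Gabriel filtration and forces $\KGdim\Ab\per A\ge 2$.

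The hard part will be the converse. Translating a continuous family of indecomposable \emph{complexes} into a generic cohomological functor is delicate, because one must simultaneously control all shifts of the family and extend the generic construction from $\Mod A$-type data to the larger abelian category $\Ab\per A$; Proposition~\ref{pr:ZspA} suggests doing this one shift at a time and then gluing, but the gluing is not obviously formal. A secondary obstacle in the forward direction is that the classification of discrete derived categories beyond the hereditary case is organised by derived equivalence and not by isomorphism class of algebras, so one genuinely needs the derived invariance of $\KGdim\Ab\per A$ noted above, as well as uniform combinatorial control over the AR-quiver of $\per A$ for each family on Vossieck's list.
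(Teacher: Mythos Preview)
The statement you are trying to prove is labelled a \emph{Conjecture} in the paper, and the paper does not offer a proof of it. The surrounding text says explicitly that the computation of $\KGdim\Ab\per k[\e]=1$ in Proposition~\ref{pr:dim1} ``gives some evidence for the following conjecture''; that example, together with the locally finite case in Proposition~\ref{pr:KGdim=0} and the Kronecker/$\bbP^1_k$ example with dimension~$2$, is the full extent of what the paper establishes. There is therefore no proof in the paper against which to compare your proposal.

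As for the proposal itself: your outline is a sensible plan of attack and correctly identifies the two genuine difficulties. For the forward direction, reducing to Vossieck's list and checking derived invariance of $\KGdim\Ab\per A$ is the natural route, but carrying the $k[\e]$ analysis over to all gentle one-cycle algebras on that list is real work (the AR-quivers have several $\bbZ A_\infty$ components rather than one, and the subobject lattice computation in the proof of Proposition~\ref{pr:dim1} leans on the very simple shape of $\End(X_{n,0})$). For the converse, the passage from a one-parameter family of indecomposable perfect complexes to a generic cohomological functor that survives to layer~$2$ of the Krull--Gabriel filtration is exactly the step that is not covered by the existing literature; the analogous statement for $\Ab A$ over Artin algebras (the ``no Krull--Gabriel dimension~$1$'' theorem of Herzog and Krause cited after Corollary~\ref{co:dim1}) uses tools specific to module categories, and extending them to $\Ab\per A$ is, as you say, not formal. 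In short, your plan is reasonable, but it is a research programme rather than a proof, which is consistent with the paper leaving this as a conjecture.
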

 
\begin{prop}\label{pr:dim1}
Let $k[\e]$ be the algebra of dual numbers. Then
  \[\KGdim\Ab\per k[\e]=1.\]
\end{prop}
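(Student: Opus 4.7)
My plan is to establish the two bounds $\KGdim \Ab \per k[\e] \ge 1$ and $\KGdim \Ab \per k[\e] \le 1$ separately.

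For the lower bound, I invoke the criterion recalled in the introduction: $\KGdim \Ab \C \le 0$ iff $\C$ is locally finite. The complexes $X_n$ of the introduction are pairwise non-shift-isomorphic (their cohomology has total $k$-dimension $2$ but is supported in two degrees separated by $n-1$), so $\per k[\e]$ has infinitely many indecomposables up to shift and is not locally finite, giving $\KGdim \Ab \per k[\e] \ge 1$.

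For the upper bound, let $\Sc_0 \subseteq \Ab \per k[\e]$ be the Serre subcategory of finite-length objects; I must show the quotient $\bar{\A} := \Ab \per k[\e]/\Sc_0$ is a length category. Since $k[\e]$ is self-injective, $\per k[\e]$ admits Auslander--Reiten triangles at every indecomposable, so by the argument in the proof of Proposition~\ref{pr:isolated} each $X_n$ yields a simple $S_{X_n} \in \Ab \per k[\e]$ with $\{\chi_{X_n}\} = (S_{X_n})$. By Krull--Schmidt, the $\Si^j X_n$ exhaust the indecomposables of $\per k[\e]$, so the $\Si^j S_{X_n}$ exhaust the simples of $\Ab \per k[\e]$, all of which lie in $\Sc_0$; hence $\Sc_0$ is the Serre closure of these simples.

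The residue field $k$, viewed in the ambient derived category, determines an endofinite cohomological functor $H_k := \Hom_\D(-,k)|_{\per k[\e]}$. By Remark~\ref{re:representable} together with Proposition~\ref{pr:ZspA} applied to $k[\e]$ (whose Ziegler spectrum consists only of $k$ and $k[\e]$, since $k[\e]$ has finite representation type), I expect $H_k$ to represent the unique simple $T \in \bar{\A}$ up to shift. It then remains to bound the length of each representable $\Hom(-,Y)$ in $\bar{\A}$, which suffices because every object of $\Ab \per k[\e]$ has a copresentation by representables. For an indecomposable $Y \simeq \Si^p X_n$, the multiplicity of $\Si^j T$ in $\Hom(-,Y)/\Sc_0$ equals $\length_{\End(H_k)} \Hom(\Hom(-,Y), H_k) = \dim_k \Hom_\D(Y, \Si^j k)$ by Yoneda. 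Applying $\Hom(-,k)$ to the projective complex $X_n$ yields a complex of copies of $k$ with zero differentials (since $\e$ acts as $0$ on $k$), so the total multiplicity across all $j$ equals $n$, which is finite.

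The main obstacle will be rigorously identifying $T$ as the \emph{only} simple of $\bar{\A}$ up to shift. This amounts to showing that every indecomposable endofinite cohomological functor on $\per k[\e]$ is either a shift of some $\Hom(-, X_n)$ (and thus lies in $\Sc_0$) or a shift of $H_k$; the verification couples the full strength of Theorem~\ref{th:2} with the explicit analysis of $\Zsp k[\e]$ as a closed subspace of $\Zsp \per k[\e]$ via Proposition~\ref{pr:ZspA}.
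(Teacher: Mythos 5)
Your lower bound argument is fine: the $X_n$ give infinitely many non-shift-isomorphic indecomposables, so $\per k[\e]$ is not locally finite and $\KGdim\ge 1$. The difficulty lies entirely in the upper bound, and there your argument has a genuine gap that goes beyond the one you flag.

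First, the route you propose for the step you acknowledge as missing does not work. Proposition~\ref{pr:ZspA} only says that $\Zsp k[\e]$ (which is $\{k[\e],k\}$) maps onto a \emph{closed subset} of $\Zsp\per k[\e]$, not onto all of it. Indeed $\Zsp\per k[\e]$ contains the whole family $[\Hom(-,X_{0,r})]$, $r\in\bbN$, none of which for $r\ge 1$ comes from an $A$-module concentrated in one degree, so the space is strictly larger than the image of $\Zsp k[\e]$. Hence knowing $\Zsp k[\e]$ does not classify the simples of $\bar\A$; you would have to classify all indecomposable injectives of $\Mod\per k[\e]$, which is the hard part, not something Theorem~\ref{th:2} or Proposition~\ref{pr:ZspA} hands you. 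Worse, even if you had the classification of $\Zsp\per k[\e]$, passing from it to a statement about simples of $\bar\A$ via Proposition~\ref{pr:ZspKGdim} presupposes that $\KGdim$ exists and is finite --- which is what you are trying to prove; there is a circularity lurking here.

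Second, your finiteness count is not yet a proof that $\Hom(-,Y)$ has finite length in $\bar\A$. Computing $\dim_k\Hom_\D(Y,\Si^jk)$ gives the multiplicity of $\Si^jT$ in a composition series \emph{provided the object already has finite length}; it does not by itself rule out an infinite ascending chain of subobjects all of whose successive quotients are zero in $\bar\A$ or never reach a simple. To close this you would still need some direct control over the subobject lattice of the representables modulo finite length objects --- and that is exactly what the paper's proof does: it shows by hand, using the AR triangles and Serre duality, that the subobject lattice of $H_{X_{n,0}}$ is an $(\bbN\cup\{\infty\})$-indexed chain whose proper subobjects and proper quotients all have finite length, hence $H_{X_{n,0}}$ is simple in $\bar\A$; a short induction along the AR exact sequences then gives $\length_{\bar\A}H_{X_{n,r}}=r+1$. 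Your plan is quite different in spirit (classify injectives first, then count multiplicities), but as written it has the two gaps above; the paper's direct lattice computation is what makes the argument go.
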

\begin{proof}
  Set $\C=\per k[\e]$ and write $H_X=\Hom(-,X)$ for each $X$ in
  $\C$. The indecomposable objects are the complexes
\[X_{n,r}\colon\;\cdots\to 0\to k[\e]\xto{\e}k[\e]\xto{\e}\cdots\xto{\e}k[\e]\to
0\to \cdots\]
concentrated in degrees $n, n+1,\cdots, n+r$ and parametrised by pairs
$(n,r)$ in
$\bbZ\times\bbN$. The Auslander--Reiten triangles are of the form
\[X_{n+1,r}\lto X_{n+1,r-1}\oplus X_{n,r+1}\lto X_{n,r}\stackrel{\e}\lto  X_{n,r}\] with
$X_{n,-1}=0$. Note that the last morphism is induced by multiplication
with $\e$. Such a triangle induces in $\Ab\C$ an
exact sequence
\begin{equation}\label{eq:AR}
0\to S_{n+1,r}\to H_{X_{n+1,r}}\to H_{X_{n+1,r-1}}\oplus
H_{X_{n,r+1}}\to H_{X_{n,r}}\to S_{n,r}\to 0
\end{equation}
with simple end terms. 

Fix $n\in\bbZ$.  We claim that the Hasse diagram of the lattice of
subobjects of $H_{X_{n,0}}$ has the following form.
\[\xymatrix@R=.45pc{
  \scriptscriptstyle{\bullet} \ar@{-}[d]\\
  \scriptscriptstyle{\bullet}\ar@{-}[d]\\
  \scriptscriptstyle{\bullet}  \ar@{-}[d]\\  \ar@{-}[d]\\
  \vdots \\ \ar@{-}[d]\\ \scriptscriptstyle{\bullet}\ar@{-}[d]\\
  \scriptscriptstyle{\bullet}\ar@{-}[d]\\ \scriptscriptstyle{\bullet}
}\] To prove this, consider the sequence of morphisms
\[\cdots\lto X_{n,2}\lto X_{n,1}\lto X_{n,0}\]
given by the Auslander--Reiten triangles.  For each $t\ge 0$, the
composite $\p_{n,t}\colon X_{n,t}\to X_{n,0}$ induces a morphism
$H_{\p_{n,t}}$ in $\Ab\C$ and its image is the unique subobject
$U\subseteq H_{X_{n,0}}$ such that $H_{X_{n,0}}/U$ has length
$t$. This explains the upper half of the Hasse diagram; the form of
the lower half then follows by Serre duality. More precisely, Serre
duality yields an equivalence $\C^\op\xto{\sim}\C$ which is the
identity on objects. It extends to an equivalence $(\Ab
\C)^\op\xto{\sim}\Ab (\C^\op)\xto{\sim}\Ab\C$ which induces a
bijection between subobjects and quotient objects of $H_{X_{n,0}}$.
It remains to show that $H_{X_{n,0}}$ has no further subobjects. To
see this, let $V\subseteq H_{X_{n,0}}$ be a subobject; it is the image
of some morphism $H_\p\colon H_X\to H_{X_{n,0}}$. We may assume
$\p\neq 0$ and that $X$ is indecomposable.  The property of the
Auslander--Reiten triangle for $X_{n,0}$ implies that the endomorphism
$\e\colon X_{n,0}\to X_{n,0}$ factors through $\p$ via a morphism
$\p'\colon X_{n,0}\to X$. Thus $\p$ and $\p'$ yield in degree zero
endomorphisms of $k[\e]$, and exactly one of them is an
isomorphism. If $\p^0$ is an isomorphism, then $H_{X_{n,0}}/V$ has
finite length; otherwise $V$ is of finite length.

The form of the lattice of subobjects implies that $H_{X_{n,0}}$ is a
simple object in $(\Ab\C)/(\Ab\C)_0$.  Using induction on $r$, the
sequence \eqref{eq:AR} shows that $H_{X_{n,r}}$ has length $r+1$. Thus
$(\Ab\C)_1=\Ab\C$.
\end{proof}

\begin{cor}\label{co:dim1}
We have $\Sp\per k[\e]=\{[\chi_{X_{0,r}}]\mid r\in\bbN\}\cup\{[\chi_k]\}$.
\end{cor}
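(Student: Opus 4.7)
The plan is to combine Theorem~\ref{th:2} with the Krull--Gabriel analysis from Proposition~\ref{pr:dim1} and the stratification of the Ziegler spectrum by the Krull--Gabriel filtration (Appendix~\ref{se:Zsp}). Writing $\C=\per k[\e]$, Proposition~\ref{pr:dim1} gives $\KGdim\Ab\C=1$, so the filtration reduces to $(\Ab\C)_0\subsetneq(\Ab\C)_1=\Ab\C$ and every point of $\Sp\C$ must arise as the class of an indecomposable endofinite cohomological functor whose Ziegler point lies either above a simple of $\Ab\C$ or above a simple of the quotient $\Ab\C/(\Ab\C)_0$. I would match these two strata with the two families in the claim.

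For the first stratum, the proof of Proposition~\ref{pr:dim1} identifies the simples of $\Ab\C$ with the Auslander--Reiten simples $S_{n,r}$ for $(n,r)\in\bbZ\times\bbN$, and the representable functor $H_{X_{n,r}}=\Hom(-,X_{n,r})$ is an indecomposable injective object of $\Mod\C$ (injectivity follows from $H_{X_{n,r}}$ being cohomological, as recorded in the proof of Theorem~\ref{th:2}). The Auslander--Reiten sequence~\eqref{eq:AR} displays each $H_{X_{n,r}}$ as the injective envelope of the unique simple it contains; after factoring out the shift action these envelopes produce exactly the family $\{[\chi_{X_{0,r}}]\mid r\in\bbN\}$. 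For the second stratum, the same proof shows that the simples of $\Ab\C/(\Ab\C)_0$ are the images of $H_{X_{n,0}}$ for $n\in\bbZ$; I would show that the indecomposable endofinite cohomological functor whose Ziegler point sits above each such simple is $\Hom(-,\Si^n k)$, where $k=k[\e]/(\e)$ is viewed as a stalk complex in degree zero. The required verifications are that $\Hom(-,k)$ is cohomological (being the restriction to $\C$ of a representable on $\D(\Mod k[\e])$), indecomposable (since $k$ is), and endofinite ($\End_{k[\e]}(k)=k$ acts and each $\dim_k\Hom_\C(X_{m,s},k)$ is finite, verified by a short count on the K-projective model $X_{m,s}$), together with the computation $\Hom_\C(X_{m,0},\Si^n k)=k$ for $m=n$ and zero otherwise. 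This pins down $\Hom(-,\Si^n k)$ as the injective envelope of the relevant simple of the quotient, and modulo shift all such functors collapse to the single class $[\chi_k]$.

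The main obstacle is producing the level-one lifts $\Hom(-,\Si^n k)$ and certifying that no other lifts occur. Here Theorem~\ref{th:2}(3) is essential: it guarantees uniqueness once a candidate indecomposable endofinite functor supported on the correct simple of $\Ab\C/(\Ab\C)_0$ has been exhibited. Disjointness of the two families then follows at once: if $[\chi_k]=[\chi_{X_{0,r}}]$ for some $r$, Theorem~\ref{th:2}(3) would force $\Hom(-,k)\cong\Hom(-,\Si^j X_{0,r})$ on $\C$ for some $j\in\bbZ$, whence $k$ would be a shift of a perfect complex, contradicting its infinite projective dimension over $k[\e]$.
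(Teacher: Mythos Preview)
Your proposal is correct and follows essentially the same route as the paper: both use the Krull--Gabriel filtration of $\Ab\C$ from Proposition~\ref{pr:dim1} together with the stratification in Proposition~\ref{pr:ZspKGdim} to list all Ziegler points, identify the level-$0$ stratum with the representable functors $H_{X_{n,r}}$ and the level-$1$ stratum with the shifts of $\Hom(-,k)$, and then observe that all of these are endofinite. You are more explicit than the paper in verifying that $\Hom(-,k)$ is endofinite and in arguing disjointness via the infinite projective dimension of $k$; one small misattribution is that the uniqueness you invoke (``no other lifts occur'') comes from Proposition~\ref{pr:ZspKGdim} (each simple at a given level has a unique injective envelope in the Ziegler spectrum) rather than from Theorem~\ref{th:2}(3), which only translates between functors and functions.
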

\begin{proof}
Set $\C=\per k[\e]$.  The Krull--Gabriel filtration of $\Ab\C$ yields a filtration
  of $\Zsp\C$ by Proposition~\ref{pr:ZspKGdim}. Thus the
  points of $\Zsp \C$ correspond to the simple objects in
  $\Ab\C$ and $(\Ab\C)/(\Ab\C)_0$.  These
  simple objects are described in the proof of
  Proposition~\ref{pr:dim1}.  The simples in $\Ab\C$
  correspond to the indecomposable objects in $\C$ and yield
  isolated points; see also Proposition~\ref{pr:isolated} and its
  proof. The simples in $(\Ab\C)/(\Ab\C)_0$ correspond
  to the complexes with $k$ concentrated in a single degree. Thus all
  points in $\Zsp\C$ are endofinite, and this yields the
  description of $\Sp\C$.
\end{proof}

\begin{rem}
Note that $\KGdim\Ab A\neq 1$ for any Artin algebra $A$ \cite{He1997,Kr1998}.
\end{rem}

\subsection*{Krull--Gabriel dimension two}

Let $k$ be a field and consider the Kronecker algebra
$A=\smatrix{k&k^2\\0&k}$. Work of Geigle \cite{Ge1985} shows that the
Krull--Gabriel dimension of $\Ab A$ equals $2$.  Thus $\KGdim\Ab\per
A=2$ by Proposition~\ref{pr:hered}, since $A$ is hereditary.

Now let $\coh\bbP^1_k$ be the category of coherent sheaves on the
projective line over $k$.  There is a well-known derived equivalence
\[\RHom(T,-)\colon\D^b(\coh \bbP^1_k)\stackrel{\sim}\longrightarrow\D^b(\mod A)\]
given by $T=\mathcal O(0)\oplus \mathcal O(1)$, and we use this to
establish the description of the cohomological functions on
$\coh\bbP^1_k$ stated in the introduction.

\begin{proof}[Proof of Proposition~\ref{pr:coh}]
  We have 
\[\KGdim\Ab\D^b(\coh\bbP^1_k)=\KGdim\Ab\per A =\KGdim\Ab A=2\] by
Proposition~\ref{pr:hered} and \cite[Theorem~4.3]{Ge1985}. This yields
an explicit description of the points in $\Zsp\D^b(\coh\bbP^1_k)$
which is parallel to that given in Corollary~\ref{co:dim1}. More
explicitly, the indecomposable endofinite cohomological functors
$\D^b(\coh\bbP^1_k)^\op\to\Ab$ are precisely the representable
functors $\Hom(-,\Si^nX)$ with $X$ an indecomposable object in
$\coh\bbP^1_k$ or $X=k(t)$ the function field and $n\in\bbZ$.
\end{proof}
 
\begin{appendix}

\section{Schanuel's lemma for triangulated categories}

Let $\C$ be a triangulated category. An exact triangle
$A\to B\to C\to $ induces a \emph{presentation} of a functor
$F\colon\C^\op\to\Ab$ provided there exists an exact sequence
\[\Hom(-,B)\lto\Hom(-,C)\lto F\lto 0.\]
Two exact triangles are called \emph{homotopy
  equivalent}\footnote{This notion is consistent with the homotopy
  relation introduced in \cite[Section~1.3]{Ne2001}.} if they induce
presentations of the same functor.

\begin{lem}\label{le:shanuel}
  Let $A\to B\to C\to $ and $A'\to B'\to C'\to $ be two homotopy
  equivalent exact triangles. Then $A\oplus B'\oplus C\cong A'\oplus
  B\oplus C'$.
\end{lem}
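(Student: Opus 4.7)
The statement is a Schanuel-style lemma for triangulated categories. My plan is to translate the problem into the functor category $\Mod\C$ (where representables $\Hom(-,X)$ are projective by Yoneda) and apply the classical Schanuel lemma iteratively to the partial projective resolutions of the common functor $F$ furnished by the two exact triangles.

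Each triangle induces in $\Mod\C$ an exact sequence $\Hom(-,A)\xrightarrow{g_*}\Hom(-,B)\xrightarrow{f_*}\Hom(-,C)\twoheadrightarrow F\to 0$, and by hypothesis both triangles have the same cokernel $F$. Projectivity of $\Hom(-,C)$ lets me lift $\id_F$ to a morphism $\Hom(-,C)\to\Hom(-,C')$, which by Yoneda comes from $\gamma\colon C\to C'$; projectivity of $\Hom(-,B)$ then gives $\beta\colon B\to B'$ with $\gamma f=f'\beta$, and TR3 extends this to a morphism of triangles $(\alpha,\beta,\gamma)$. Next I would break the partial resolution into the three short exact sequences
\[0\to\Im(f_*)\to\Hom(-,C)\to F\to 0,\ \ 0\to\Im(g_*)\to\Hom(-,B)\to\Im(f_*)\to 0,\ \ 0\to\ker(g_*)\to\Hom(-,A)\to\Im(g_*)\to 0,\]
together with their primed analogs. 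Three successive applications of Schanuel's lemma (feeding the iso produced at each stage into the next, and using projectivity of the representables) yield an isomorphism
\[\ker(g_*)\oplus\Hom(-,A')\oplus\Hom(-,B)\oplus\Hom(-,C')\cong\ker(g'_*)\oplus\Hom(-,A)\oplus\Hom(-,B')\oplus\Hom(-,C)\]
in $\Mod\C$.

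To conclude, the kernel terms $\ker(g_*)$ and $\ker(g'_*)$ must be identified and cancelled, after which Yoneda extracts the required isomorphism in $\C$. The identification is obtained by applying the same construction to the rotated triangles $\Sigma^{-1}C\to A\to B\to C$ (and primed analog), which present the common shifted functor $F\comp\Sigma$; the morphism $\Sigma^{-1}\gamma_*$ restricts via the morphism of long exact sequences to an isomorphism $\ker(g_*)=\Im(\Sigma^{-1}h_*)\xrightarrow{\sim}\ker(g'_*)=\Im(\Sigma^{-1}h'_*)$, compatibly with the iso above.

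The hard part will be this last cancellation: $\ker(g_*)$ is not projective in $\Mod\C$, so cancellation is not automatic, and one must exhibit the iso $\ker(g_*)\cong\ker(g'_*)$ as a restriction of $\alpha_*\colon\Hom(-,A)\to\Hom(-,A')$ that splits off the Schanuel construction compatibly. This amounts to a 5-lemma check for the induced morphism of long exact sequences, where the delicate point is that $\beta_*$ and $\gamma_*$ need not be isomorphisms themselves, only compatible with the identity on $F$; one must leverage the freedom in the lift $(\alpha,\beta,\gamma)$ (and its reverse, constructed symmetrically) to get the required matching.
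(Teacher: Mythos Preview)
Your strategy --- pass to $\Mod\C$ and run an iterated Schanuel argument on the projective representables --- is the paper's, and you correctly flag the cancellation of the kernel terms as the crux. But your plan for that step does not close the gap.

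First, a simplification you are circling: by periodicity of the long exact sequence, $\ker(g_*)$ is \emph{canonically} $\Si^{-1}F$ (it is the cokernel of $\Hom(-,\Si^{-1}B)\to\Hom(-,\Si^{-1}C)$), and likewise $\ker(g'_*)=\Si^{-1}F$. So both triangles yield exact sequences
\[0\lto\Si^{-1}F\lto\Hom(-,A)\lto\Hom(-,B)\lto\Hom(-,C)\lto F\lto 0\]
with the \emph{same} end terms, not merely isomorphic ones. Your iterated Schanuel then reads $\Si^{-1}F\oplus P\cong\Si^{-1}F\oplus Q$, where $P,Q$ are the two alternating sums of representables. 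You propose to show this isomorphism restricts to the identity on $\Si^{-1}F$ and split it off, but the Schanuel isomorphism is assembled from pullbacks and splittings chosen via projectivity, and there is no reason it should agree on $\Si^{-1}F$ with the map coming from your morphism of triangles; ``leveraging the freedom in the lift'' is not an argument, and cancelling a non-projective summand is genuinely not automatic.

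The paper avoids cancellation entirely. Since the morphism of triangles you constructed induces the identity on both $F$ and $\Si^{-1}F$, the two sequences above represent the same class in $\Ext^3(F,\Si^{-1}F)$. The paper then proves the following Schanuel variant: if two exact sequences $0\to M\to P_r\to\cdots\to P_0\to N\to 0$ and $0\to M\to Q_r\to\cdots\to Q_0\to N\to 0$ with projective $P_i,Q_i$ represent the same class in $\Ext^{r+1}(N,M)$, then $\bigoplus_i(P_{2i}\oplus Q_{2i+1})\cong\bigoplus_i(P_{2i+1}\oplus Q_{2i})$ --- with no $M$ in sight. The proof is by induction on $r$: the pullback of $P_0\to N\leftarrow Q_0$ reduces the length by one while preserving the Ext-class hypothesis, and the base case $r=0$ is just equivalence of extensions. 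That base case is precisely where your $5$-lemma intuition belongs; invoking it at the end, after the iterated Schanuel, is too late.
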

\begin{proof}
The triangles induce exact sequences
\[0\to\Si^{-1}F\to\Hom(-,A)\to\Hom(-,B)\to\Hom(-,C)\to F\to 0\]
and
\[0\to\Si^{-1}F\to\Hom(-,A')\to\Hom(-,B')\to\Hom(-,C')\to F\to 0\]
which represent the same class in $\Ext^3(F,\Si^{-1}F)$, since the
presentations induce a morphism between both triangles.  Now apply the
variant of Schanuel's lemma which is given below.
\end{proof}

\begin{lem}
  Let \[0\to M\to P_r\to\cdots \to P_1\to P_0\to N\to 0\] and \[0\to
  M\to Q_r\to\cdots \to Q_1\to Q_0\to N\to 0\] be exact sequences in
  some abelian category which represent the same class in
  $\Ext^{r+1}(N,M)$. If all $P_i$ and $Q_i$ are projective, then
\[\bigoplus_{i\ge 0}(P_{2i}\oplus Q_{2i+1})\cong \bigoplus_{i\ge 0}(P_{2i+1}\oplus Q_{2i}).\]
\end{lem}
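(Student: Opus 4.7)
The plan is to construct a chain map $\Phi : \xi \to \xi'$ that lifts $\id_N$ at the bottom \emph{and} $\id_M$ at the top---writing $\xi$ for the augmented complex $0 \to M \to P_r \to \cdots \to P_0 \to N \to 0$ with $\xi_{-1} = N$, $\xi_{r+1} = M$, and similarly $\xi'$---and then to extract the desired isomorphism from the mapping cone of $\Phi$.

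To produce $\Phi$, the projectivity of each $P_i$ together with acyclicity of $\xi'$ lets one inductively lift $\id_N$ to a chain map $f : \xi \to \xi'$. This induces some $\alpha \in \End(M)$ at the top satisfying $\alpha_\ast[\xi'] = [\xi]$ in $\Ext^{r+1}(N,M)$. Since by hypothesis $[\xi]=[\xi']$, we have $(\alpha - \id_M)_\ast[\xi']=0$, and because the natural action of $\End(M)$ on $[\xi']$ is precisely the obstruction to lifting an endomorphism to a chain map vanishing on $N$, there exists a chain map $u : \xi \to \xi'$ lifting $0_N$ and inducing $\alpha - \id_M$ on $M$. Then $\Phi := f - u$ lifts the identity on both ends.

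Now form the mapping cone $C_\bullet$ of $\Phi$: with the cone convention $C_i = \xi'_i \oplus \xi_{i-1}$, its terms are $C_{-1} = N$, $C_0 = Q_0 \oplus N$, $C_i = Q_i \oplus P_{i-1}$ for $1 \le i \le r$, $C_{r+1} = M \oplus P_r$, $C_{r+2} = M$, and $C_\bullet$ is acyclic since $\xi$ and $\xi'$ are. Because $\Phi$ restricts to the identity on $N$ and $M$, the top and bottom differentials of $C_\bullet$ contain graphs of the identity, so $C_\bullet$ splits off trivial subcomplexes $0 \to N \xrightarrow{\id} N \to 0$ in degrees $0,-1$ and $0 \to M \xrightarrow{\id} M \to 0$ in degrees $r+2,r+1$. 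The complement is a bounded acyclic complex of projectives
\[ \tilde C_\bullet : \; 0 \to P_r \to Q_r \oplus P_{r-1} \to \cdots \to Q_1 \oplus P_0 \to Q_0 \to 0. \]
Inductively from the top, each kernel in $\tilde C_\bullet$ is projective (the topmost one is $0$, and each subsequent short exact piece splits because its left end is projective), so $\tilde C_\bullet$ decomposes as a direct sum of two-term identities $0 \to A \xrightarrow{\id} A \to 0$. Consequently $\bigoplus_i \tilde C_{2i} \cong \bigoplus_i \tilde C_{2i+1}$, which after substitution and reindexing is precisely $\bigoplus_{i \ge 0}(P_{2i} \oplus Q_{2i+1}) \cong \bigoplus_{i \ge 0}(P_{2i+1} \oplus Q_{2i})$.

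The main obstacle is the correction step producing $\Phi$: one must verify that every endomorphism of $M$ annihilating the class $[\xi']$ is realised as the top map of some chain map $\xi \to \xi'$ vanishing on $N$. This surjectivity---a standard but nontrivial consequence of the Yoneda-Ext description of $\Ext^{r+1}$---is the unique point at which the hypothesis $[\xi]=[\xi']$ is invoked; everything downstream is a mechanical consequence of bounded acyclic complexes of projectives being contractible.
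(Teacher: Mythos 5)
Your proof is correct but takes a genuinely different route from the paper's. The paper proceeds by induction on $r$: the base case $r=0$ is the classical Schanuel lemma, and the inductive step forms the pullback $K$ of $\eta\colon P_0\to N$ and $\theta\colon Q_0\to N$ (so $Q_0\oplus\Ker\eta\cong K\cong P_0\oplus\Ker\theta$), pads the two truncated sequences with the trivial complexes $Q_0\xrightarrow{\id}Q_0$ and $P_0\xrightarrow{\id}P_0$, and observes that the resulting $r$-extensions of $K$ by $M$ still represent the same class in $\Ext^{r}(K,M)$, whence induction applies. You instead package the whole argument into one mapping-cone computation: build a quasi-isomorphism $\Phi\colon\xi\to\xi'$ restricting to the identity on both $N$ and $M$, split its cone into two trivial two-term pieces plus a bounded acyclic complex of projectives, and finish by contractibility. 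The two proofs are roughly dual in flavour: the paper splices off one step at the $N$-end and pushes the $\Ext$-equality down one degree at a time, while you correct the comparison map once at the $M$-end and discharge the hypothesis in a single stroke. The paper's proof is shorter and entirely self-contained (only pullbacks and classical Schanuel); yours isolates more explicitly where the hypothesis $[\xi]=[\xi']$ and where projectivity enter, at the price of the surjectivity step which you correctly flag as the crux but leave unproved. It does hold and is a short dimension-shift argument: $\beta_*[\xi]=0$ says $\beta$ extends along $M\hookrightarrow P_r$ to some $\delta\colon P_r\to M$, and then $u_{r+1}=\beta$, $u_r=(M\hookrightarrow Q_r)\circ\delta$, $u_i=0$ for $i<r$ is the required chain map. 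Two small slips that do not affect the argument: the lift $f$ of $\id_N$ inducing $\alpha$ on $M$ gives $\alpha_*[\xi]=[\xi']$, not $\alpha_*[\xi']=[\xi]$ (though under $[\xi]=[\xi']$ these coincide); and the complement $\tilde C_\bullet$ has $Q_0$ in degree $0$ only up to isomorphism, since the literal summand $Q_0\subseteq C_0$ is not itself a subcomplex of the cone.
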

\begin{proof}
  We use induction on $r$. The case $r=0$ is clear and we suppose that
  $r>0$. The pullback of $\eta\colon P_0\to N$ and $\theta\colon
  Q_0\to N$ induces an exact sequence
\[0\lto K\lto P_0\oplus Q_0\lto N\lto 0\] with $Q_0\oplus \Ker \eta\cong K\cong
P_0\oplus \Ker\theta$, by Schanuel's lemma. Adding complexes of the form
$Q_0\xto{\id}Q_0$ and $P_0\xto{\id}P_0$ yields two exact sequences
\[0\to M\to P_r\to\cdots \to P_2\to P_1\oplus Q_0\to K\to 0\]
and \[0\to M\to Q_r\to\cdots \to Q_2\to Q_1\oplus P_0\to K\to 0\]
which represent the same class in $\Ext^{r}(K,M)$. Now the assertion
follows from the induction hypothesis.
\end{proof}

\section{Additive functions}

Let $\A$ be an abelian category. A function $\chi\colon\Ob\A\to\bbN$
is called \emph{additive} if $\chi(X)=\chi(X')+\chi(X'')$ for each
exact sequence $0\to X'\to X\to X''\to 0$.

We give a quick proof of the following result using the localisation
theory for abelian categories \cite{Ga1962}.

\begin{prop}[Crawley-Boevey \cite{CB1994a}]\label{pr:add}
  Every additive function $\Ob\A\to\bbN$ can be written uniquely as a
  locally finite sum of irreducible additive functions.
\end{prop}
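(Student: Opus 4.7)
The plan is to use Gabriel localisation to reduce to the case of a length category, where the claim becomes a direct consequence of the Jordan--H\"older theorem.

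First I would set $\A_\chi=\{X\in\A\mid\chi(X)=0\}$ and observe from additivity on short exact sequences that $\A_\chi$ is a Serre subcategory. Next, let $q\colon\A\to\bar\A:=\A/\A_\chi$ be the Gabriel quotient and let $\bar\chi$ be the induced additive function on $\bar\A$, which now vanishes only on the zero object. I would then argue that $\bar\A$ is a length category: any strictly ascending chain $X_0\subsetneq X_1\subsetneq\cdots$ of subobjects of some $X$ in $\bar\A$ satisfies $\bar\chi(X_0)<\bar\chi(X_1)<\cdots\le\bar\chi(X)<\infty$ and must terminate, and descending chains terminate by a dual argument.

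In any length category $\B$, an additive function $\eta\colon\Ob\B\to\bbN$ is determined by its values on simples: writing $\chi_S(X)$ for the Jordan--H\"older multiplicity of a simple $S$ in $X$, one has $\eta=\sum_{[S]}\eta(S)\chi_S$, locally finite because each object has finitely many composition factors. The irreducible such functions are precisely the $\chi_S$: any decomposition $\chi_S=\alpha+\beta$ expands as $\alpha=\sum m_{S'}\chi_{S'}$ and $\beta=\sum n_{S'}\chi_{S'}$ with $m_{S'}+n_{S'}=\delta_{S,S'}$, forcing one summand to vanish.

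Applying this to $\bar\A$ and pulling back along $q$ will give $\chi=\sum_{[S]}\bar\chi(S)(\chi_S\comp q)$, locally finite with irreducible summands, since the bijection between additive functions on $\bar\A$ and additive functions on $\A$ that vanish on $\A_\chi$ preserves sums and hence irreducibility. For uniqueness, given an arbitrary decomposition $\chi=\sum_{i\in I}\eta_i$ into irreducibles, I would first observe that each $\eta_i\le\chi$ pointwise (values lie in $\bbN$), so $\eta_i$ vanishes on $\A_\chi$ and descends to an irreducible $\bar\eta_i$ on $\bar\A$; then uniqueness inside the length category $\bar\A$ identifies $\{\bar\eta_i\}_{i\in I}$ with the multiset containing each $\chi_S$ with multiplicity $\bar\chi(S)$. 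The subtlest point is the pointwise domination $\eta_i\le\chi$, which depends on the codomain being $\bbN$ and is exactly what allows the transfer to the length quotient $\bar\A$; once this is in hand, the remainder of the argument is formal.
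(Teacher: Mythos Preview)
Your proof is correct and follows essentially the same route as the paper: form the Serre subcategory $\Sc_\chi=\{X\mid\chi(X)=0\}$, pass to the length category $\A/\Sc_\chi$, and decompose via Jordan--H\"older multiplicities of simples, with uniqueness coming from the pointwise bound $\eta_i\le\chi$ forcing every summand to factor through the quotient. The only cosmetic difference is that the paper packages each irreducible as the length function on $\A/\Sc_S$ (for $\Sc_S$ the Serre subcategory of objects with no composition factor $S$), which is exactly your $\chi_S\comp q$.
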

\begin{proof}
  Fix an additive function $\chi\colon\Ob\A\to\bbN$. The objects $X$
  satisfying $\chi(X)=0$ form a Serre subcategory of $\A$ which we
  denote by $\Sc_\chi$. The quotient category $\A/\Sc_\chi$ is an
  abelian length category since the length of each object $X$ is
  bounded by $\chi(X)$. Let $\Sp\chi$ (the \emph{spectrum} of $\chi$)
  denote a representative set of simple objects in $\A/\Sc_\chi$. For
  each $S$ in $\Sp\chi$ let $\Sc_S$ denote the Serre subcategory of
  $\A$ formed by all objects $X$ such that a composition series of $X$
  in $\A/\Sc_\chi$ has no factor isomorphic to $S$. Define
  $\chi_S\colon\Ob\A\to\bbN$ by sending $X$ to the length of $X$ in
  $\A/\Sc_S$. From the construction it follows
  that 
\begin{equation}\label{eq:chi}
    \chi=\sum_{S\in\Sp\chi}\chi(S)\chi_S.
  \end{equation}
  We claim that each $\chi_S$ is irreducible and that the above
  expression is unique.  To see this, write $\chi=\chi'+\chi''$ as a
  sum of two additive functions. This implies
  $\Sc_\chi\subseteq\Sc_{\chi'}$, and if $\chi'\neq 0$, then for some
  $S\in\Sp\chi$ the object $S$ is non-zero in $\A/\Sc_{\chi'}$. In
  that case $\chi_S$ arises as a summand of $\chi'$ with multiplicity
  $\chi'(S)$.
\end{proof}

Now suppose that $\A$ is \emph{essentially small}.  Thus the isomorphism
classes of objects in $\A$ form a set. An exact functor
$F\colon\A^\op\to\Ab$ is called \emph{endofinite} if $F(X)$ has finite
length as $\End(F)$-module for each object $X$. An endofinite exact
functor $F$ induces an additive function
\[\chi_F\colon\Ob\A\lto\bbN,\quad X\mapsto\length_{\End(F)}F(X).\]

\begin{prop}\label{pr:exact}
The assignment $F\mapsto \chi_F$ induces a bijection between the
  isomorphism classes of indecomposable endofinite exact
  functors $\A^\op\to\Ab$ and the irreducible additive functions
  $\Ob\A\to\bbN$.
\end{prop}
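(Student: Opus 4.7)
The plan is to deduce this from Proposition~\ref{pr:add} by working inside the ind-completion $\widehat\A:=\Lex(\A^\op,\Ab)$, in which $\A$ embeds as the full subcategory of finitely presented objects. The key identification is that a functor $\A^\op\to\Ab$ is exact if and only if it is injective in $\widehat\A$; so endofinite exact functors are precisely the endofinite injectives of $\widehat\A$, and these decompose essentially uniquely into indecomposables with local endomorphism rings by \cite[Theorem~3.5.2]{CB1994b}. Before going further, I would first check that $\chi_F$ is indeed additive for each endofinite exact $F$: an exact sequence $0\to X'\to X\to X''\to 0$ in $\A$ is sent by $F$ to a short exact sequence of finite length $\End(F)$-modules, and the additivity of length yields the claim.

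For surjectivity, given an irreducible additive $\chi$, I would follow the construction in the proof of Proposition~\ref{pr:add}: the Serre subcategory $\Sc_\chi=\{X\mid\chi(X)=0\}$ makes $\A/\Sc_\chi$ a length category, and the irreducibility of $\chi$ reduces the decomposition \eqref{eq:chi} to a single term $\chi=\chi_S$ for a simple object $S$ of $\A/\Sc_\chi$. Let $E(S)$ denote the injective envelope of $S$ in the ind-completion of $\A/\Sc_\chi$; it is indecomposable with local endomorphism ring. Composing $\Hom(-,E(S))$ with the quotient functor $\A\to\A/\Sc_\chi$ then produces an exact functor $F\colon\A^\op\to\Ab$, and the standard identification of composition multiplicities with $\Hom$-dimensions into injective envelopes in a locally finite Grothendieck category shows that $F$ is endofinite with $\chi_F=\chi_S=\chi$.

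For injectivity, if $F$ and $G$ are indecomposable endofinite exact with $\chi_F=\chi_G$, then they share the same vanishing Serre subcategory $\Sc_{\chi_F}$, and they factor through $\A/\Sc_{\chi_F}$ as indecomposable injectives in the corresponding ind-completion with the same simple socle (the unique $S$ furnished by the preceding paragraph); uniqueness of injective envelopes then gives $F\cong G$. Conversely, that an indecomposable endofinite exact $F$ induces an irreducible $\chi_F$ is obtained by applying the surjectivity step to the canonical decomposition of $\chi_F$ from Proposition~\ref{pr:add}: this lifts to a direct sum $\bigoplus F_i$ of indecomposable endofinite exact functors with the same associated function as $F$, and the uniqueness clause in \cite[Theorem~3.5.2]{CB1994b} applied in $\widehat\A$ forces $F\cong\bigoplus F_i$, so that indecomposability of $F$ leaves only one summand.

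The main obstacle I anticipate is verifying the key equality $\length_{\End(F)}F(X)=\chi_S(X)$ in the surjectivity step, which requires tracking how the endomorphism ring of the injective envelope $E(S)$, whose residue division ring governs composition multiplicities, relates to $\End(F)$ after pulling back along $\A\to\A/\Sc_\chi$; once this is in place, everything else reduces to standard Grothendieck-category formalism together with the decomposition theorem from \cite{CB1994b}.
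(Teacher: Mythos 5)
Your surjectivity and injectivity steps track the paper's own argument quite closely: the paper also passes to $\B=\A/\Sc_\chi$, takes the injective envelope of the unique simple in $\Lex(\B^\op,\Ab)$, and identifies exact functors with injectives. The obstacle you correctly flag -- the equality $\length_{\End(F)}\Hom(H_X,F)=\length_\B X$ -- is exactly where the paper's one-line justification lives (every finitely generated $\End(F)$-submodule of $\Hom(H_X,F)$ has the form $\Hom(H_X/H_{X'},F)$), so naming it as the crux is the right instinct.

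The genuine gap is in your final paragraph, the argument that $F$ indecomposable implies $\chi_F$ irreducible. Two things go wrong. First, you lift the decomposition $\chi_F=\sum_i\chi_i$ from Proposition~\ref{pr:add} to $\bigoplus_i F_i$ and assert this has ``the same associated function as $F$''; but the decomposition is $\chi_F=\sum_S\chi_F(S)\chi_S$ with possibly nontrivial multiplicities, and since $\chi_{G\oplus G}=\chi_G$ (not $2\chi_G$), the direct sum $\bigoplus F_i$ satisfies $\chi_{\bigoplus F_i}=\sum_S\chi_S$, which equals $\chi_F$ only if every multiplicity $\chi_F(S)$ is $1$. That is precisely what needs proof. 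Second, even granting $\chi_{\bigoplus F_i}=\chi_F$, the uniqueness clause in \cite[Theorem~3.5.2]{CB1994b} only says that a \emph{given} endofinite object decomposes uniquely; it does not say that two distinct endofinite objects with equal $\chi$ are isomorphic (indeed $F$ and $F\oplus F$ have equal $\chi$), so it cannot force $F\cong\bigoplus F_i$. The paper instead argues directly: for every simple $S$ of $\B=\A/\Sc_{\chi_F}$ one has $F(S)=\Hom(H_S,F)\neq 0$, so $H_S$ embeds into the simple socle of the indecomposable injective $F$, forcing all simples of $\B$ to be isomorphic; and then $\chi_F(S)=\length_\B S=1$ by the length computation. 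That socle argument is both what your injectivity step tacitly needs (to know both $F$ and $G$ have the same socle without first knowing $\chi$ is irreducible) and what closes the converse direction.
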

\begin{proof}
  We construct the inverse map. Let $\chi\colon\Ob\A\to\bbN$ be an
  irreducible additive function. Following the proof of
  Proposition~\ref{pr:add}, we consider the Serre subcategory
  $\Sc_\chi$ of $\A$ consisting of the objects $X$ satisfying
  $\chi(X)=0$. The quotient category $\B=\A/\Sc_\chi$ is an abelian
  length category, and $\chi(X)$ equals the length of $X$ in $\B$ for
  each object $X$, since $\chi$ is irreducible.  Now consider the
  abelian category $\Lex(\B^\op,\Ab)$ of left exact functors
  $\B^\op\to\Ab$; see \cite{Ga1962} for details. The Yoneda functor
  \[\B\lto \Lex(\B^\op,\Ab),\quad X\mapsto H_X=\Hom(-,X)\]
  identifies $\B$ with the full subcategory of finite length
  objects. There is a unique simple object in $\Lex(\B^\op,\Ab)$
  since $\chi$ is irreducible, and we denote by $F$ its injective
  envelope. It follows that $F$ is indecomposable, and the injectivity
  implies that $F$ is exact.  For each $X$ in $\B$ we have
\[\length_{\End(F)}F(X)=\length_{\End(F)}\Hom(H_X,F)=\length_\B X=\chi(X)\]
since each finitely generated $\End(F)$-submodule of $\Hom(H_X,F)$ is
of the form $\Hom(H_X/H_{X'},F)$ for some subobject $X'\subseteq X$.
Let $F'\colon\A^\op\to\Ab$ be the composite of $F$ with the quotient
functor $\A\to\B$ and observe that $\End(F')\cong\End(F)$.  Then $F'$
has the desired properties: it is indecomposable endofinite exact and
$\chi_{F'}=\chi$.

It remains to show for an indecomposable endofinite exact functor
$F\colon\A^\op\to\Ab$ that the function $\chi_F$ is irreducible.  Set
$\B=\A/\Sc_{\chi_F}$ and view $F$ as an exact functor $\B^\op\to\Ab$.
Note that $\Hom(H_S,F)=F(S)\neq 0$ for each simple object $S$ in
$\B$. The indecomposability of $F$ implies that all simple objects in
$\B$ are isomorphic, and the equation \eqref{eq:chi} then implies that
$\chi$ is irreducible since for each simple object $S$
\[\chi_F(S)=\length_{\End(F)}F(S)=\length_{\End(F)}\Hom(H_S,F)=\length_\B
S=1.\qedhere\]
\end{proof}

\begin{rem}\label{re:end}
Let $F\colon\A^\op\to\Ab$ be an indecomposable endofinite exact
functor and $S$ the corresponding simple object in $\A/\Sc_{\chi_F}$.
Then the endomorphism ring $\End(F)$ is local and
\[\End(F)/\rad\End(F)\cong\End(S)\]
since $F$ identifies with an injective envelope of $S$. Here, $\rad
A$ denotes the Jacobson radical of a ring $A$.
\end{rem}

\begin{rem}\label{re:sum}
Let $F\colon\A^\op\to\Ab$ be an exact
functor and $\B=\A/\Sc_{F}$, where $\Sc_F$ denotes the Serre subcategory of
objects $X$ satisfying $F(X)=0$. For each object $X$ in $\A$ we have
\[\chi_F(X)=\length_{\End(F)}F(X)=\length_{\End(F)}\Hom(H_X,F)=\length_\B X,\]
and this can be used to compute $\sum_i\chi_{F_i}$ for any
decomposition $F=\bigoplus_i F_i$ into exact functors.
\end{rem}

Let $\Sp\A$ denote the set of irreducible additive functions
$\Ob\A\to\bbN$. Following \cite[\S4]{Kr1997}, we define on $\Sp\A$ the
\emph{Ziegler topology}; the basic open sets are of the form
\[(X)=\{\chi\in\Sp\A\mid\chi(X)\neq 0\},\qquad X\in\Ob\A.\] 

\begin{prop}\label{pr:top}
  The set $\Sp\A$ of irreducible additive functions $\Ob\A\to\bbN$
  forms a topological space which satisfies the $T_1$-axiom, that is,
  $\{\chi\}$ is closed for each $\chi\in\Sp\A$.
\end{prop}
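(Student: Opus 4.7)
The plan has two parts: first, to exhibit a topology on $\Sp\A$ with the given sets as (at least) a subbasis; second, to verify that each singleton is closed. For the first part, I simply take $\{(X)\}_{X\in\Ob\A}$ to be a subbasis, so the topology consists of arbitrary unions of finite intersections; this requires nothing beyond the general construction of a topology from a family of sets, and a more careful look in fact shows the family is a basis, although this is not required by the statement. The key structural input, recorded in the proof of Proposition~\ref{pr:exact}, is that for each $\chi\in\Sp\A$ the quotient $\B_\chi:=\A/\Sc_\chi$ (with $\Sc_\chi=\{X\in\A:\chi(X)=0\}$) is a length category with a unique simple object up to isomorphism, and $\chi(X)$ equals the length of $X$ in $\B_\chi$.

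Given distinct $\chi,\chi'\in\Sp\A$, the heart of the argument is to show that the Serre subcategories $\Sc_\chi$ and $\Sc_{\chi'}$ are incomparable under inclusion. Equality would imply that both $\chi$ and $\chi'$ coincide with the length function on the common quotient, contradicting $\chi\neq\chi'$. A strict inclusion $\Sc_\chi\subsetneq\Sc_{\chi'}$ would yield a non-zero Serre subcategory $\Sc_{\chi'}/\Sc_\chi$ of $\B_\chi$; since $\B_\chi$ has a unique simple object, any such Serre subcategory must contain that simple, and then by closure under extensions coincide with all of $\B_\chi$, forcing $\Sc_{\chi'}=\A$ and contradicting $\chi'\neq 0$. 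The reverse strict inclusion is excluded by symmetry.

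Incomparability yields, for each $\chi'\neq\chi$, an object $X_{\chi'}\in\Sc_\chi\setminus\Sc_{\chi'}$, so that $(X_{\chi'})$ is an open set containing $\chi'$ but not $\chi$. Taking the union over $\chi'\in\Sp\A\setminus\{\chi\}$ exhibits this complement as open, and hence $\{\chi\}$ is closed. The step I expect to require the most care is the rigidity argument for incomparability of Serre kernels; it rests entirely on the uniqueness of the simple object in $\B_\chi$ for irreducible $\chi$, which is supplied by the proof of Proposition~\ref{pr:exact} (indecomposability of the associated injective functor forces all simples in $\B_\chi$ to be isomorphic).
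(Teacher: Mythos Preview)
Your argument is correct and arrives at the same conclusion by essentially the same mechanism as the paper, but the packaging differs in two respects worth noting.

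For the topology, the paper does not simply declare the sets $(X)$ a subbasis; it identifies $\Sp\A$ with a set of indecomposable injectives in $\Lex(\A^\op,\Ab)$ and invokes \cite[Lemma~4.1]{Kr1997} to show that $(X_1)\cap(X_2)$ is a union of sets of the form $(X)$, so that the family is genuinely a basis. Your subbasis approach is of course sufficient for the bare statement, and you correctly flag that the basis property holds; the paper's route has the advantage of placing $\Sp\A$ inside the ambient Ziegler spectrum, which is used later.

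For the $T_1$-axiom, the paper's proof is a single sentence: $\chi$ is the unique irreducible function vanishing on all of $\Sc_\chi$, so $\{\chi\}$ is the complement of $\bigcup_{X\in\Sc_\chi}(X)$. Your incomparability argument is a fuller justification of exactly this claim (in fact a strictly stronger one, since you also rule out $\Sc_{\chi'}\subsetneq\Sc_\chi$). The crucial input---that $\A/\Sc_\chi$ is a length category with a unique simple and that $\chi$ equals the length function there---is drawn from the proof of Proposition~\ref{pr:exact} in both cases. Your version has the virtue of being self-contained; the paper's is terser but relies on the reader supplying precisely the reasoning you wrote out.
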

\begin{proof}
  We identify each irreducible additive function $\Ob\A\to\bbN$ with
  an indecomposable injective object in $\Lex(\A^\op,\Ab)$, as in
  Proposition~\ref{pr:exact} and its proof. Thus
  \cite[Lemma~4.1]{Kr1997} applies, and the argument given there shows
  that for two objects $X_1,X_2$ in $\A$, the set $(X_1)\cap (X_2)$
  can be written as union of basic open sets.

  A singleton $\{\chi\}$ is closed since $\chi$ is the only irreducible
  function satisfying $\chi(X)=0$ for all $X\in\Sc_\chi$.
\end{proof}

The space $\Sp\A$ of additive functions identifies via
Proposition~\ref{pr:exact} with a subspace of $\Zsp\A$ which is
discussed in the subsequent Appendix~\ref{se:Zsp}.

\begin{lem}\label{le:finite}
  Let $X$ be an object in $\A$ and $n\ge 0$. Then
\[\U_{X,n}=\{\chi\in\Sp\A\mid\chi(X)\le n\}\]
is a closed subset of $\Sp\A$.
\end{lem}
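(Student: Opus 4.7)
The plan is to prove the lemma by showing that the complement
\[\{\chi\in\Sp\A\mid \chi(X)>n\}\]
is open in the Ziegler topology. Since $\Sp\A$ is a topological space by Proposition~\ref{pr:top}, it suffices to exhibit this complement as a union of finite intersections of basic open sets of the form $(Y)$ for $Y\in\Ob\A$; general topology then makes each summand, and hence the whole union, open.

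The key tool is the identification from Remark~\ref{re:sum}, namely that for each irreducible $\chi\in\Sp\A$ one has $\chi(X)=\length_{\A/\Sc_\chi}X$, where $\Sc_\chi$ denotes the Serre subcategory of objects annihilated by $\chi$. Thus $\chi(X)>n$ if and only if there exists a chain $0=\bar X_0\subsetneq\bar X_1\subsetneq\cdots\subsetneq\bar X_{n+1}=X$ of subobjects in the Gabriel quotient $\A/\Sc_\chi$. By the standard lifting property for such quotients, such a chain pulls back to a filtration $0=X_0\subseteq X_1\subseteq\cdots\subseteq X_{n+1}=X$ in $\A$ with $q(X_i)=\bar X_i$, and each successive quotient satisfies $\chi(X_i/X_{i-1})>0$. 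Conversely, additivity of $\chi$ forces $\chi(X)=\sum_{i=1}^{n+1}\chi(X_i/X_{i-1})\ge n+1$ whenever such a filtration exists in $\A$.

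Combining both implications and noting that the condition $\chi\in\bigcap_{i=1}^{n+1}(X_i/X_{i-1})$ is precisely $\chi(X_i/X_{i-1})>0$ for each $i$, I would conclude
\[\{\chi\in\Sp\A\mid \chi(X)>n\}=\bigcup\,\bigcap_{i=1}^{n+1}(X_i/X_{i-1}),\]
where the union runs over all filtrations $0=X_0\subseteq X_1\subseteq\cdots\subseteq X_{n+1}=X$ in $\A$. This union is indexed by a genuine set, since the subobjects of the fixed object $X$ form a set, and each member is a finite intersection of basic open sets. Hence the complement of $\U_{X,n}$ is open, which is the desired conclusion.

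The main obstacle to watch out for is the lifting of chains from the quotient $\A/\Sc_\chi$ back to $\A$; this is a standard consequence of Gabriel localisation (\cite{Ga1962}), but needs to be invoked cleanly so that the resulting filtration really lives in $\A$ with each subquotient $X_i/X_{i-1}$ visible as an object giving a basic open set $(X_i/X_{i-1})$ in the Ziegler topology. Everything else is a bookkeeping exercise in additivity and the definition of the topology.
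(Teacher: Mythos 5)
Your proof is correct and is essentially the paper's argument viewed through De Morgan duality: the paper writes $\U_{X,n}=\bigcap_\p\bigcup_i\{\chi\mid\chi(X_{i+1}/X_i)=0\}$ over all chains $\p$ and observes this is closed by construction, whereas you exhibit the complement as $\bigcup_\p\bigcap_i(X_i/X_{i-1})$ and conclude it is open; both rest on the same identity $\chi(X)=\length_{\A/\Sc_\chi}X$ from Remark~\ref{re:sum} together with the lifting of chains along the quotient functor $\A\to\A/\Sc_\chi$.
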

\begin{proof}
For a chain of subobjects
\[\p\colon \;0=X_0\subseteq X_1\subseteq\ldots\subseteq X_{n+1}=X\] set
\[\U_\p=\bigcup_{i=0}^n\{\chi\in\Sp\A\mid \chi(X_{i+1}/X_i)=0\},\]
and let $\U=\bigcap_\p\U_\p$ where $\p=(X_i)_{0\le i\le n+1}$
runs through all such chains. This set is closed by construction, and
it follows from Remark~\ref{re:sum} that $\U=\U_{X,n}$.
\end{proof}

\section{The spectrum of an abelian category}\label{se:Zsp}

Let $\A$ be an essentially small abelian category. We consider the
category of exact functors $\A^\op\to\Ab$. This category inherits an
exact structure from $\Ab$ and we denote by $\Zsp\A$ the set of
isomorphism class of indecomposable injective objects.  Note that
$\Zsp\A$ equals the spectrum of the Grothendieck abelian category
$\Lex(\A^\op,\Ab)$ of left exact functors $\A^\op\to\Ab$ in the sense
of \cite[Chap.~IV]{Ga1962}.  Following \cite[\S4]{Kr1997}, we define
on $\Zsp\A$ the \emph{Ziegler topology}; the basic open sets are of
the form
\[(X)=\{F\in\Zsp\A\mid F(X)\neq 0\},\qquad X\in\Ob\A.\] 

\begin{lem}\label{le:Zsp}
The assignment
\[\Zsp\A\supseteq\U\longmapsto\{X\in\A\mid F(X)=0\text{ for
  all }F\in\U\}\] induces an inclusion reversing bijection between the
closed subsets of $\Zsp\A$ and the Serre subcategories of $\A$. In
particular, $\Zsp\A$ is quasi-compact iff $\A$ admits a generator,
that is, an object not contained in any proper Serre subcategory of $\A$.
\end{lem}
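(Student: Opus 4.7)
The plan is to identify $\Zsp\A$ with the spectrum of indecomposable injectives in the Grothendieck abelian category $\G = \Lex(\A^\op, \Ab)$, in which $\A$ sits as the category of finitely presented objects via the Yoneda embedding $X \mapsto H_X := \Hom(-,X)$. Since $F(X) = \Hom(H_X, F)$, the basic open set $(X) \subseteq \Zsp\A$ coincides with $\{F \in \Zsp\G : \Hom(H_X, F) \neq 0\}$, so the Ziegler topology on $\Zsp\A$ agrees with the one defined intrinsically on $\Zsp\G$. The asserted bijection then follows from the standard Gabriel correspondence between closed subsets of $\Zsp\G$ and Serre subcategories of $\G^\fp = \A$ in the locally coherent setting (note $\G$ is locally coherent precisely because $\A$ is abelian); see \cite{Ga1962, Kr1997}.

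First I would record the two assignments. Write $\Phi(\U) = \{X \in \A : F(X) = 0 \text{ for all } F \in \U\}$ and $\Psi(\Sc) = \{F \in \Zsp\A : F(X) = 0 \text{ for all } X \in \Sc\}$. Then $\Phi(\U)$ is a Serre subcategory because each $F \in \U$ is exact, while $\Psi(\Sc) = \bigcap_{X \in \Sc}\bigl(\Zsp\A \setminus (X)\bigr)$ is closed by definition of the topology. Inclusion-reversal of both maps and the containments $\Sc \subseteq \Phi(\Psi(\Sc))$ and $\U \subseteq \Psi(\Phi(\U))$ are immediate.

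The heart of the argument is to prove the reverse containments. For $\Phi(\Psi(\Sc)) \subseteq \Sc$: if $X \notin \Sc$, the image of $X$ in the Gabriel quotient $\A/\Sc$ is non-zero, and I would take an indecomposable injective envelope $F$ of its Yoneda image inside $\Lex((\A/\Sc)^\op, \Ab)$; viewing $F$ inside $\G$ via the canonical functor $\A \to \A/\Sc$ produces an element of $\Psi(\Sc)$ that is non-zero on $X$. For $\Psi(\Phi(\U)) \subseteq \U$: this is the substantive content of Gabriel's classification of closed subsets of $\Zsp\G$ by Serre subcategories of its finitely presented objects, and is the main obstacle---it requires the locally coherent structure of $\G$ in an essential way, and one needs to know that a closed subset is indeed recovered from the Serre subcategory it cuts out.

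For the quasi-compactness statement, I would translate covers through the bijection. An arbitrary open cover reduces to a basic open cover $\Zsp\A = \bigcup_{i \in I}(X_i)$, which under $\Phi$ amounts to the equality $\langle X_i : i \in I\rangle = \A$ of Serre subcategories, and a finite subcover $(X_{i_1}) \cup \cdots \cup (X_{i_n}) = (X_{i_1} \oplus \cdots \oplus X_{i_n})$ amounts to $X_{i_1} \oplus \cdots \oplus X_{i_n}$ being a generator. If a generator $G$ of $\A$ exists and $\langle X_i : i \in I\rangle = \A$, then $G$ arises from finitely many of the $X_i$ by iterated formation of subobjects, quotients and extensions, so $G \in \langle X_{i_1}, \ldots, X_{i_n}\rangle$ for some finite subset, yielding the finite subcover. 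Conversely, since $\G$ is locally finitely presented every non-zero object of $\Zsp\G$ is non-zero on some finitely presented object, so $\{(X) : X \in \A\}$ covers $\Zsp\A$, and extracting a finite subfamily produces a single generator as the direct sum of its members.
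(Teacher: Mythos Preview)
Your approach aligns with the paper's: the paper simply cites Theorem~4.2 and Corollary~4.5 of \cite{Kr1997}, and your sketch is precisely an unpacking of that locally coherent Gabriel correspondence, including the honest admission that the inclusion $\Psi(\Phi(\U))\subseteq\U$ is the substantive step requiring the locally coherent structure of $\Lex(\A^\op,\Ab)$.

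One small slip to fix: in the argument for $\Phi(\Psi(\Sc))\subseteq\Sc$ you write ``take an indecomposable injective envelope $F$ of its Yoneda image''. The injective envelope of an arbitrary non-zero object need not be indecomposable. What you want is that the Yoneda image of $X$ in $\Lex((\A/\Sc)^\op,\Ab)$ is non-zero and finitely presented, hence admits a non-zero map to some indecomposable injective (for instance, pass to a non-zero coherent quotient with a simple top in some further localisation, or invoke directly that in a locally coherent Grothendieck category every non-zero finitely presented object is detected by the Ziegler spectrum, which is part of what \cite{Kr1997} establishes). With that adjustment the sketch is sound and the quasi-compactness argument is correct as written.
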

\begin{proof}
See Theorem~4.2 and Corollary~4.5 in \cite{Kr1997}. 
\end{proof}

The construction of this space is functorial with respect to certain
functors.  Let $f\colon\A\to\B$ be an exact functor between abelian
categories. Given $F$ in $\Zsp\B$ the composite $F\comp f$ is
injective (since the left adjoint of restriction along $f$ is exact)
but need not be indecomposable. Thus $f$ induces a continuous map
$\Zsp\B\to\Zsp\A$ provided that indecomposability is preserved. For
instance a quotient functor $\A\to\A/\C$ with respect to a Serre
subcategory $\C\subseteq\A$ has this property; it induces a
homeomorphism \[\Zsp\A/\C
\stackrel{\sim}\longrightarrow\{F\in\Zsp\A\mid F(\C)=0\}.\]

\subsection*{The Krull--Gabriel filtration}

Following \cite[Chap.~IV]{Ga1962} and \cite[\S6]{GJ1981} we define a
filtration of $\A$ recursively as follows:
\begin{itemize}
\item[--] $\A_{-1}$ is the full subcategory containing only the zero
  object.
\item[--] $\A_{\a}$ is the full subcategory of objects of finite
length in $\A/\A_\b$, if $\a=\b+1$.
\item[--] $\A_{\a}=\bigcup_{\g<\a}\A_\g$, if $\a$ is a limit ordinal.
\end{itemize}
If $\A=\bigcup_{\a}\A_\a$ then the smallest ordinal $\a$ such that
$\A=\A_\a$ is called \emph{Krull--Gabriel dimension} and denoted
$\KGdim\A$. In that case we say that  $\KGdim\A$ \emph{exists}.

For each ordinal $\a$, let $\Zsp_\a\A$ denote the set of functors
$F\in\Zsp\A$ such that $F(\A_\a)=0$ and $F(X)\neq 0$ for some object
$X$ which is simple in $\A/\A_\a$. This yields a bijection between the
isomorphism classes of simple objects in $\A/\A_\a$ and the elements
in $\Zsp_\a\A$.

\begin{prop}\label{pr:ZspKGdim}
Suppose that $\KGdim\A=\a$. Then 
$\Zsp\A$ equals the disjoint union $\bigcup_{\b<\a}\Zsp_\b\A$.
\end{prop}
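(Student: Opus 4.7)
The plan is to verify two assertions: (a) for distinct ordinals $\b,\b'<\a$ the sets $\Zsp_\b\A$ and $\Zsp_{\b'}\A$ are disjoint, and (b) every $F\in\Zsp\A$ belongs to $\Zsp_\b\A$ for some $\b<\a$. The two conditions together give the claimed decomposition as a disjoint union.

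For (a), assume $\b<\b'$ and $F\in\Zsp_\b\A$. By definition there is $X\in\A$ whose image $\bar X$ in $\A/\A_\b$ is simple and satisfies $F(X)\neq 0$. Being simple, $\bar X$ has finite length in $\A/\A_\b$, so $X\in\A_{\b+1}\subseteq\A_{\b'}$. If $F$ also lay in $\Zsp_{\b'}\A$, then $F(\A_{\b'})=0$ would force $F(X)=0$, a contradiction.

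For (b), fix $F\in\Zsp\A$. Since $\A=\A_\a$, the set $\{\g\le\a\mid F|_{\A_\g}\neq 0\}$ is non-empty; let $\b$ be its minimum. Then $\b\neq-1$ because $\A_{-1}=0$, and $\b$ cannot be a limit ordinal: if $X\in\A_\b=\bigcup_{\g<\b}\A_\g$ had $F(X)\neq 0$, then $X\in\A_\g$ for some $\g<\b$, contradicting minimality. Hence $\b=\g+1$ for some $\g<\a$ with $F(\A_\g)=0$. Exactness of $F$ together with vanishing on the Serre subcategory $\A_\g$ lets $F$ factor through an exact functor $\bar F\colon(\A/\A_\g)^\op\to\Ab$ via the universal property of Gabriel's quotient. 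Now pick $X\in\A_{\g+1}$ with $F(X)\neq 0$; its image $\bar X$ has finite length in $\A/\A_\g$, so a composition-series induction using the exactness of $\bar F$ produces a simple $\bar S$ in $\A/\A_\g$ with $\bar F(\bar S)\neq 0$. Lifting $\bar S$ to an object $Z\in\A$ yields $F(Z)=\bar F(\bar S)\neq 0$, witnessing $F\in\Zsp_\g\A$.

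The main subtlety is the factorisation step: one must know that an indecomposable injective exact functor $F$ vanishing on $\A_\g$ really descends to an exact functor on $\A/\A_\g$. This is a standard consequence of Gabriel's localisation theorem, since the morphisms inverted by the quotient functor $\A\to\A/\A_\g$ are precisely those with kernel and cokernel in $\A_\g$, and an exact $F$ vanishing on $\A_\g$ turns each such morphism into an isomorphism. Everything else reduces to bookkeeping with the transfinite filtration and basic properties of simple objects in abelian length categories.
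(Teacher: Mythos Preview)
Your proof is correct. The paper itself does not give an argument here but simply refers to \cite[Theorem~12.7]{Kr1998b}; your direct verification via the minimal $\b$ with $F|_{\A_\b}\neq 0$, the observation that this $\b$ must be a successor, and the factorisation of $F$ through $\A/\A_\g$ is exactly the standard route to this result and is what one finds when unwinding the cited reference.
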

\begin{proof} 
See \cite[Theorem~12.7]{Kr1998b}.
\end{proof}

Removing successively from $\Zsp\A$ the points in $\Zsp_\b\A$ for
$\b=-1,0,1,\ldots$ yields the \emph{Cantor-Bendixson filtration} of
$\Zsp\A$, provided that $\KGdim\A$ exists. This follows from the next
lemma.

\begin{lem}
  Let $F\in\Zsp\A$. If $F(X)\neq 0$ for some finite length object $X$
  then $F$ is \emph{isolated}, that is, $\{F\}$ is open. The converse
  holds when $\KGdim\A$ exists.
\end{lem}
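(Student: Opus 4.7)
For the forward direction, assume $F(X)\neq 0$ for some $X$ of finite length. The plan is to extract a simple $S\in\A$ with $F(S)\neq 0$ (by applying the exact $F$ to a composition series of $X$) and to show $(S)=\{F\}$. The key point is that $H_S=\Hom(-,S)$ is itself a simple object of $\Lex(\A^\op,\Ab)$: any finitely generated subobject of $H_S$ is the image of some $H_V\to H_S$ with $V\subseteq S$, hence equals $0$ or $H_S$ by simplicity of $S$. By Yoneda the nonzero map coming from $F(S)\neq 0$ is monic $H_S\hookrightarrow F$, so $H_S$ is a simple subobject of $F$. Since an indecomposable injective in a Grothendieck category is uniform, $H_S$ must be the socle of $F$, whence $F$ is the injective envelope of $H_S$. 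The same argument applied to any $G\in(S)$ yields $G\cong F$, so $(S)=\{F\}$.

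For the converse assume $\KGdim\A$ exists and $F$ is isolated. By the argument of Proposition \ref{pr:top} (which applies equally to $\Zsp\A$), the basic opens are closed under finite intersection up to being unions, so the open singleton $\{F\}$ coincides with $(Y)$ for some $Y\in\A$, giving $F(Y)\neq 0$ and $G(Y)=0$ for every $G\neq F$. Suppose for contradiction that $F$ vanishes on the subcategory $\A_0$ of finite length objects. Then for each simple $S\in\A$ the injective envelope $G_S$ of $H_S$ satisfies $G_S(S)\neq 0$ while $F(S)=0$, so $G_S\neq F$ and hence $G_S(Y)=0$. But if $S$ were a subquotient of $Y$, say $V\subseteq U\subseteq Y$ with $U/V\cong S$, the first isomorphism theorem applied to $H_U\to H_{U/V}=H_S$ (using that $V\hookrightarrow U$ is mono and $H_S$ is simple) yields $H_U/H_V\cong H_S$; by injectivity of $G_S$ the inclusion $H_S\hookrightarrow G_S$ extends along $H_S\hookrightarrow H_Y/H_V$ to a nonzero map $H_Y\to G_S$, forcing $G_S(Y)\neq 0$, contradiction. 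Thus $Y$ has no simple subquotient in $\A$.

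The main obstacle is to show that when $\KGdim\A$ exists every nonzero object of $\A$ has a simple subquotient; together with $Y\neq 0$ (from $F(Y)\neq 0$) this will close the contradiction. The plan is transfinite induction on the smallest $\gamma$ with $Y\in\A_\gamma$, which exists by hypothesis and cannot be a limit ordinal since $\A_\gamma=\bigcup_{\gamma'<\gamma}\A_{\gamma'}$ in the limit case. The base $\gamma=0$ is immediate, $Y$ being of finite length. For the successor $\gamma=\gamma'+1$ the image $\bar Y$ in $\A/\A_{\gamma'}$ is nonzero of finite length, so contains a simple subobject $\bar T\subseteq\bar Y$; lifting $\bar T\hookrightarrow\bar Y$ (represented by a morphism in $\A$ with kernel in $\A_{\gamma'}$) produces a subobject $T\subseteq Y$ in $\A$ whose image in $\A/\A_{\gamma'}$ is $\bar T$. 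If $T$ is simple in $\A$ we are done; otherwise $T$ has a nonzero proper subobject $W$, and by simplicity of $\bar T$ either $W\in\A_{\gamma'}$ or $T/W\in\A_{\gamma'}$, so the inductive hypothesis applied to $W$ or $T/W$ yields a simple subquotient of $Y$.
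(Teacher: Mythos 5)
Your forward direction is the same argument as the paper's (the paper compresses it to one line: reduce to $X$ simple, then $\{F\}=(X)$ because $F$ is an injective envelope of $\Hom(-,X)$), and your elaboration — $H_S$ simple in $\Lex(\A^{\op},\Ab)$, the resulting mono $H_S\hookrightarrow F$ forces $F$ to be the injective envelope of $H_S$, and the same holds for any $G\in(S)$ — is accurate.

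For the converse the paper does not give an argument at all: it cites Lemma~12.11 of \cite{Kr1998b}. You supply a self-contained proof, and it is sound. The skeleton is: an open singleton must be a basic open $(Y)$; for any simple subquotient $S$ of $Y$ the injective envelope $G_S$ of $H_S$ satisfies $G_S(Y)\neq 0$ (extend $H_S\cong H_U/H_V\hookrightarrow H_Y/H_V$ into $G_S$ by injectivity), so $G_S\in(Y)=\{F\}$ and hence $F(S)\neq 0$; and when $\KGdim\A$ exists every nonzero $Y$ does have a simple subquotient, which you establish by transfinite induction on the least $\gamma$ with $Y\in\A_\gamma$, using in the successor step that a simple $\bar T\subseteq\bar Y$ in $\A/\A_{\gamma'}$ lifts to $T\subseteq Y$ and that a nonzero proper $W\subseteq T$ puts either $W$ or $T/W$ into $\A_{\gamma'}$. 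This is a nice, elementary replacement for the citation; the one thing worth flagging is that the proof-by-contradiction framing is unnecessary — once you know $Y\neq 0$ has a simple subquotient $S$, you conclude directly $F\cong G_S$ and $F(S)\neq 0$ without first assuming $F$ kills $\A_0$. You also rely (reasonably, and as the paper does) on the standard facts that the Yoneda embedding $\A\to\Lex(\A^{\op},\Ab)$ is exact and that injective objects of $\Lex(\A^{\op},\Ab)$ are exact functors, so that the injective envelopes $G_S$ really lie in $\Zsp\A$.
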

\begin{proof}
  If $F(X)\neq 0$ for some finite length object $X$ then we may assume
  that $X$ is simple. Thus $\{F\}=(X)$, since $F$ is an injective
  envelope of $\Hom(-,X)$ in $\Lex(\A^\op,\Ab)$. For the converse, see
  \cite[Lemma~12.11]{Kr1998b}.
\end{proof}

\begin{lem}\label{le:KGdim}
  Let $(f_i\colon\A\to\A_i)_{i\in I}$ be a family of quotient functors
  and set $\U_i=\{F\in\Zsp\A\mid F \text{ factors through }f_i\}$ for
  each $i$.  Suppose that $\Zsp\A=\bigcup_i\U_i$ and that each $\U_i$
  is an open subset. Then $\KGdim\A=\sup_i\KGdim\A_i$.
\end{lem}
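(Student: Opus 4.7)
My plan is to prove the two inequalities in $\KGdim\A=\sup_i\KGdim\A_i$ separately, using the correspondence between the Krull--Gabriel filtration of an abelian category and the Cantor--Bendixson (CB) filtration of its spectrum.

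For $\sup_i\KGdim\A_i\leq\KGdim\A$ I show that Krull--Gabriel dimension is non-increasing under Serre quotients. Writing $\A_i=\A/\C_i$, a transfinite induction on $\alpha$ establishes the inclusion $f_i(\A_\alpha)\subseteq(\A_i)_\alpha$: the base case uses that quotient functors send simple objects to simples or to zero; the successor step factors $f_i$ through $\A\twoheadrightarrow\A/\A_\alpha$ (legitimate by the inductive hypothesis) and exploits that any Serre quotient preserves finite length; limits are automatic. Taking $\alpha=\KGdim\A$ gives the inequality.

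For $\KGdim\A\leq\beta:=\sup_i\KGdim\A_i$ I exploit the topology of $\Zsp\A$. By Proposition~\ref{pr:ZspKGdim}, every point of each $\Zsp\A_i$ has CB rank strictly below $\KGdim\A_i\leq\beta$. Since each $\U_i\subseteq\Zsp\A$ is clopen and homeomorphic via $f_i$ to $\Zsp\A_i$, a routine topological argument shows that CB ranks are preserved on clopen subspaces: the key observations are that a point in a clopen subspace is isolated there iff isolated in the ambient space, and that the subspace remains clopen after removing its isolated points, so this propagates through the transfinite CB filtration. Therefore every $F\in\Zsp\A$, lying in some $\U_i$, has CB rank strictly less than $\beta$, so the CB filtration of $\Zsp\A$ terminates at stage $\beta$.

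To convert this spectral termination into the bound $\KGdim\A\leq\beta$, I run a joint transfinite induction on $\alpha\leq\beta$: restricting the clopen cover to the closed subset $\Zsp(\A/\A_\alpha)=\{F\mid F(\A_\alpha)=0\}\subseteq\Zsp\A$ (each piece is clopen and homeomorphic to the spectrum of a Serre quotient of some $\A_i$, of $\KGdim\leq\beta$ by part one) yields $\KGdim(\A/\A_\alpha)\leq\beta$; once this is known, the preceding lemma applies to $\A/\A_\alpha$ in both directions, identifying the $(\alpha{+}1)$-st CB stratum of $\Zsp\A$ with $\{F\mid F(\A_\alpha)=0\}$. At $\alpha=\beta$, CB termination gives $\Zsp(\A/\A_\beta)=\emptyset$, hence $\A=\A_\beta$. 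The main obstacle is arranging this joint induction so as to avoid circularity between the existence of $\KGdim$ for each quotient $\A/\A_\alpha$ and the CB-KG identification, but the clopen-cover step always establishes the former before the latter is invoked.
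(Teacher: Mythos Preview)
Your first inequality is fine: the inclusion $f_i(\A_\alpha)\subseteq(\A_i)_\alpha$ holds for any Serre quotient and needs no openness hypothesis.

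The gap is in the second inequality. Your Cantor--Bendixson argument correctly shows $\Zsp\A^{(\beta)}=\varnothing$, but to deduce $\A_\beta=\A$ you must know that the CB filtration does not outrun the Krull--Gabriel filtration, i.e.\ that every isolated point of $\Zsp(\A/\A_{\alpha})$ comes from a simple object of $\A/\A_{\alpha}$. This is the \emph{converse} direction of the preceding lemma, which requires $\KGdim(\A/\A_{\alpha})$ to exist. Your ``joint induction'' supplies this existence by restricting the clopen cover to $\Zsp(\A/\A_\alpha)$ and concluding $\KGdim(\A/\A_\alpha)\le\beta$; but that conclusion \emph{is} the lemma you are proving, applied to $\A/\A_\alpha$, so the argument is circular. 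If instead you try to bypass the lemma and argue directly that an isolated $F\in\Zsp(\A/\A_{\alpha})$ yields a simple there, you find only that $F$ is isolated in some $\U_i\cap\V_\alpha\cong\Zsp\B_i$ with $\B_i$ a Serre quotient of $\A/\A_\alpha$; the simple of $\B_i$ you obtain need not lift to a simple of $\A/\A_\alpha$.

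The paper avoids this entirely by using the openness hypothesis \emph{algebraically} rather than topologically: it proves the equality $f_i(\A_\alpha)=(\A_i)_\alpha$ (not just your inclusion) for every $\alpha$. Granting this, both inequalities are immediate: if $(\A_i)_\alpha=\A_i$ for all $i$ and some $F\in\Zsp\A$ had $F(\A_\alpha)=0$, then $F=F'\circ f_i$ for some $i$ and $F'((\A_i)_\alpha)=F'(f_i(\A_\alpha))=0$, forcing $F'=0$. So the missing idea is that openness of $\U_i$ forces the reverse inclusion $(\A_i)_\alpha\subseteq f_i(\A_\alpha)$; your proof only exploits clopenness at the level of the spectrum and never extracts this algebraic consequence.
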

\begin{proof}
  The assumption on each $\U_i$ to be open implies that
  $f_i(\A_\a)=(\A_i)_\a$ for all $i$ and each ordinal $\a$. On the
  other hand, $(\A_i)_\a=\A_i$ for all $i$ implies $\A_\a=\A$, since
  $\Zsp\A=\bigcup_i\U_i$. From this the assertion follows.
\end{proof}

\subsection*{Triangulated categories}

Let $G$ be a group of automorphisms acting on $\A$. Then we denote by
$\Zsp\A/G$ the corresponding orbit space of $\Zsp\A$. Thus the points
in $\Zsp\A/G$ are the equivalence classes of the form
$[F]=\{F\comp\gamma\mid\gamma\in G\}$. The closed subsets  correspond
to Serre subcategories of $\A$ that are $G$-invariant.

Let $\C$ be an essentially small triangulated category with suspension
$\Si\colon\C\xto{\sim}\C$. We identify cohomological functors
$\C^\op\to\Ab$ with exact functors $(\Ab\C)^\op\to\Ab$ via
Lemma~\ref{le:Freyd} and denote by $\Zsp\C$ the orbit space
$(\Zsp\Ab\C)/\Si$ with respect to the action of $\Si$.

\begin{lem}\label{le:compact}
The space $\Zsp\C$ is quasi-compact iff $\C$ admits a generator,
that is, an object not contained in any proper thick subcategory of $\C$.
\end{lem}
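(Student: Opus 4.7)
My plan is to reduce the statement to the abelian analogue Lemma~\ref{le:Zsp} by establishing a bijection between thick subcategories of $\C$ and $\Si$-invariant Serre subcategories of $\Ab\C$, and then transporting the resulting characterization through the quotient map $\Zsp\Ab\C\twoheadrightarrow\Zsp\C$.

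First I would observe that since $\Zsp\C=(\Zsp\Ab\C)/\Si$ carries the quotient topology, its closed subsets correspond to $\Si$-invariant closed subsets of $\Zsp\Ab\C$, which by Lemma~\ref{le:Zsp} correspond bijectively to $\Si$-invariant Serre subcategories $\Sc\subseteq\Ab\C$. Next I would set up the bijection with thick subcategories by sending a thick $\B\subseteq\C$ to the smallest $\Si$-invariant Serre subcategory $\Sc(\B)\subseteq\Ab\C$ containing every $\Hom(-,Y)$ with $Y\in\B$, and conversely sending $\Sc$ to $\B(\Sc)=\{X\in\C\mid\Hom(-,X)\in\Sc\}$. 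The $\Si$-invariance together with the long exact sequence in $\Ab\C$ induced by an exact triangle readily imply that $\B(\Sc)$ is thick. The crucial property for the bijection is $\B=\C$ iff $\Sc(\B)=\Ab\C$; the $\Leftarrow$ direction is immediate since every object of $\Ab\C$ is a quotient of some representable $\Hom(-,B)$, while the $\Rightarrow$ direction is the technical point discussed below.

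Granted the bijection, both implications then follow from the finite character of the Serre (resp.\ thick) closure operations. For ($\Rightarrow$): assuming $\Zsp\C$ quasi-compact, the closed subsets of $\Zsp\C$ corresponding to $\Sc(\Thick(X))$ for $X\in\C$ have empty intersection, since $\bigcup_X\Sc(\Thick(X))$ contains every representable and therefore generates $\Ab\C$. Quasi-compactness yields finitely many $X_1,\ldots,X_n$ such that already $\Sc(\Thick(X_1\oplus\cdots\oplus X_n))=\Ab\C$, whence $X_1\oplus\cdots\oplus X_n$ generates $\C$ by the bijection. For ($\Leftarrow$): assuming $X_0\in\C$ is a generator, given a family $\{\Sc_i\}$ of $\Si$-invariant Serre subcategories every finite union of which is proper, suppose for contradiction that $\bigcup_i\Sc_i$ generates $\Ab\C$. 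Then $\Hom(-,X_0)$ is built from objects of the union by finitely many Serre operations, so it already lies in the Serre closure of $\bigcup_{i\in F}\Sc_i$ for some finite $F$; the bijection then yields $X_0\in\Thick(\bigcup_{i\in F}\B(\Sc_i))$, which equals $\C$ because $X_0$ generates, forcing $\bigcup_{i\in F}\Sc_i$ to generate $\Ab\C$ and contradicting the hypothesis.

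The main obstacle is the non-trivial half of the bijection, namely $\Hom(-,X)\in\Sc(\B)\Rightarrow X\in\B$. I would address this via the Verdier quotient $q\colon\C\to\C/\B$: the universal property of the abelianisation yields an induced exact functor $\Ab(\C)\to\Ab(\C/\B)$ which kills $\Sc(\B)$ and factors through a natural equivalence $\Ab(\C)/\Sc(\B)\xto{\sim}\Ab(\C/\B)$. Under this equivalence, $\Hom(-,X)\in\Sc(\B)$ translates to $\Hom(-,q(X))=0$ in $\Ab(\C/\B)$, which by Yoneda forces $q(X)=0$, that is, $X\in\B$.
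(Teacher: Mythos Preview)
Your argument is correct and follows the same strategy as the paper: reduce to Lemma~\ref{le:Zsp} by identifying closed subsets of $\Zsp\C$ with $\Si$-invariant Serre subcategories of $\Ab\C$, and relate these to thick subcategories of $\C$. The paper's proof is considerably terser: for ($\Leftarrow$) it simply observes that a generator $X$ forces every $\Si$-invariant Serre subcategory containing $\Hom(-,X)$ to equal $\Ab\C$ (the easy half of your bijection), and for ($\Rightarrow$) it intersects the closed sets $\U_X=\{[F]\mid F|_{\Thick(X)}=0\}$ and invokes quasi-compactness. Your explicit treatment of the bijection, and in particular the ``crucial property'' $\Hom(-,X)\in\Sc(\B)\Rightarrow X\in\B$, actually fills in the one step the paper leaves to the reader---namely the final implication ``$\U_{X_1}\cap\cdots\cap\U_{X_r}=\varnothing$ implies $\C=\Thick(X_1,\ldots,X_r)$'', which needs exactly that a proper thick $\B$ yields a proper $\Sc(\B)$.

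Two small remarks. First, for the crucial property you do not need the full equivalence $\Ab(\C)/\Sc(\B)\xto{\sim}\Ab(\C/\B)$; it suffices that the exact functor $\Ab\C\to\Ab(\C/\B)$ induced by $q$ kills $\Sc(\B)$ (immediate, since it kills each $\Hom(-,Y)$ with $Y\in\B$) and sends $\Hom(-,X)$ to $\Hom(-,qX)$. Second, your ($\Leftarrow$) argument routes through the full bijection and a contradiction, whereas the paper's direct argument avoids this: once you know that the $\Si$-invariant Serre subcategory generated by $\Hom(-,X_0)$ is all of $\Ab\C$, quasi-compactness follows from the finitary nature of Serre closure without ever invoking the hard direction of the bijection.
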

\begin{proof}
Suppose first that $\C$ has  a generator, say $X$. Then any $\Si$-invariant
Serre subcategory of $\Ab\C$ containing $\Hom(-,X)$ equals
$\Ab\C$. Thus  $\Zsp\C$ is quasi-compact by Lemma~\ref{le:Zsp}. To show the converse,
consider for each $X\in\C$ the closed subset \[\U_X=\{[F]\in\Zsp\C\mid
F(Y)=0\text{ for all }Y\in\Thick(X)\}.\]
Then $\bigcap_{X\in\C}\U_X=\varnothing$. If  $\Zsp\C$ is
quasi-compact, then there are finitely many objects such that
$\U_{X_1}\cap\ldots \cap\U_{X_r}=\varnothing$. This implies $\C=\Thick(X_1,\ldots,X_r)$.
\end{proof}

\end{appendix}

\end{document}